\documentclass[12,reqno]{article}
 \usepackage{graphicx,amssymb,amscd,amsxtra,amsmath,amsfonts,amsthm}
 \usepackage[nobysame,alphabetic]{amsrefs}
 \usepackage{float}
 \usepackage{tikz}
 \usetikzlibrary{arrows}
 \usepackage{enumerate}
 \usepackage{mathrsfs}

\pdfoptionalwaysusepdfpagebox 5


\newcounter{num}

\theoremstyle{plain}
\newtheorem{theorem}[num]{Theorem}
\newtheorem{lemma}[num]{Lemma}

\newtheorem{prop}[num]{Proposition}

\newtheorem*{theoremm*}{Theorem}
\newtheorem*{lemma*}{Lemma}
\newtheorem*{prop*}{Proposition}
\newtheorem*{corollary*}{Corollary}

\theoremstyle{remark}

\newtheorem{definition}[num]{\bf Definition}

\newtheorem{claim}[num]{\bf Claim}

\newtheorem{example}[num]{\bf Example}


\newcommand{\N}{\mathbb{N}}

\newcommand{\Z}{\mathbb{Z}}


\newcommand{\<}{\langle}
\renewcommand{\>}{\rangle}

\newcommand{\e}{\varepsilon}

\newcommand{\gp}[3]{(#2 \cdot #3)_{#1}}

\renewcommand{\P}{\mathbb{P}}

\renewcommand{\le}{\leqslant}
\renewcommand{\ge}{\geqslant}

\newcommand{\rmu}{\check \mu}

\newcommand{\norm}[1]{\left| {#1} \right|}

\newcommand{\cross}{\times}
\newcommand{\he}{\hookrightarrow_h}

\newcommand{\rnu}{\check \nu}

\newcommand{\Cay}{\text{Cay}}

\newcommand{\myref}[1]{\ref{prop:#1}.\ref{item:#1}}

\DeclareMathSymbol
{\rightrightarrows}
{\mathrel}{AMSa}{"13}

\linespread{1.2}

\begin{document}

\author{Joseph Maher and Alessandro Sisto}

\title{Random subgroups of acylindrically hyperbolic groups and
  hyperbolic embeddings}

\maketitle

\abstract{Let $G$ be an acylindrically hyperbolic group.  We consider
  a random subgroup $H$ in $G$, generated by a finite collection of
  independent random walks.  We show that, with asymptotic probability
  one, such a random subgroup $H$ of $G$ is a free group, and the
  semidirect product of $H$ acting on $E(G)$ is hyperbolically
  embedded in $G$, where $E(G)$ is the unique maximal finite normal
  subgroup of $G$.}

\tableofcontents

\section{Introduction}

\emph{Acylindrically hyperbolic groups} have been defined by Osin, who
showed in \cite{osin} that several approaches to groups that exhibit
rank one behaviour \cites{bestvina-fujiwara, ham, dgo, sisto} are all
equivalent; see Section \ref{sec:background} for the precise
definition.  Acylindrically hyperbolic groups form a very large class
of groups that vastly generalises the class of non-elementary
hyperbolic groups and includes non-elementary relatively hyperbolic
groups, mapping class groups \cites{mm1, bowditch, ps}, Out($F_n$)
\cite{bestvina-feighn}, many groups acting on CAT(0) spaces \cite{bhs,
  chatterji-martin, genevois, healy, sisto}, and many others, see for example
\cite{gruber-sisto, minasyan-osin, osin2}.

Acylindrical hyperbolicity has strong consequences: For example, every
acylindrically hyperbolic group is SQ-universal (in particular it has
uncountably many pairwise non-isomorphic quotients) and its bounded
cohomology is infinite dimensional in degrees 2 \cite{hull-osin} and 3
\cite{fps}. These results all rely on the notion of
\emph{hyperbolically embedded subgroup}, as defined in \cite{dgo}
(see Section \ref{sec:background} for the definition), and in fact, on
virtually free hyperbolically embedded subgroups. Hyperbolically
embedded subgroups are hence very important for the study of
acylindrically hyperbolic groups, and in fact they enjoy several nice
properties such as almost malnormality \cite{dgo} and quasiconvexity
\cite{sisto2}.

In this paper we show that, roughly speaking, a random subgroup $H$ of
an acylindrically hyperbolic group is free and virtually
hyperbolically embedded.  We now give a slightly simplified version of
our main theorem, see Section \ref{sec:background} for a more refined
statement.  We shall write $E(G)$ for the maximal finite normal
subgroup of $G$, which for $G$ acylindrically hyperbolic exists by
\cite[Theorem 6.14]{dgo}, and given a subgroup $H < G$, we shall write
$H E(G)$ for the subset of $G$ consisting of $\{ hg \mid h \in H, g
\in E(G) \}$, which in this case is a subgroup, as $E(G)$ is
normal. We say that a property $P$ holds with \emph{asymptotic
  probability one} if the the probability $P$ holds tends to one as
$n$ tends to infinity.

\begin{theorem} \label{theorem:main_simple} %
Let $G$ be an acylindrically hyperbolic group, with maximal finite
normal subgroup $E(G)$, and let $\mu$ be a probability measure on $G$
whose support is finite and generates $G$ as a semigroup. For $k, n$
positive integers, let $H_{k, n}$ denote the subgroup of $G$ generated
by $k$ independent random walks generated by $\mu$, each of length
$n$, which we shall denote by $w_{i, n}$.

Then for each fixed $k$, the probability that each of the following
events occurs with asymptotic probability one.
\begin{enumerate}

\item The subgroup $H$ is freely generated by the $\{ w_{1, n_1},
\ldots w_{k, n_k} \}$ and quasi-isometrically embedded.

\item The subgroup $H E(G)$ is a semidirect product $H \ltimes E(G)$,
and is hyperbolically embedded in $G$.

\end{enumerate}

\end{theorem}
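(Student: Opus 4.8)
The plan is to marry the ergodic theory of random walks on groups acting on hyperbolic spaces with the structure theory of hyperbolically embedded subgroups. Since $G$ is acylindrically hyperbolic, by \cite{osin} it admits a non-elementary acylindrical action on a hyperbolic space $X$, which we fix; let $x_0$ be a basepoint. The first input concerns a single walk: by the theory of random walks on weakly hyperbolic groups (Maher--Tiozzo), the sample path $n \mapsto w_{i,n} x_0$ makes positive linear progress in $X$, sublinearly tracks a geodesic ray, and converges almost surely to a point $\lambda_i^+ \in \partial X$ distributed according to a non-atomic harmonic measure; the reflected walk (driven by $\check\mu$) likewise produces a backward limit $\lambda_i^- \in \partial X$ with non-atomic law. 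Moreover, because the action is acylindrical, with asymptotic probability one $w_{i,n}$ is loxodromic (hence WPD) with stable translation length growing linearly in $n$, and its attracting and repelling fixed points on $\partial X$ lie within sublinear distance of $\lambda_i^\pm$.

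The second step uses independence of the $k$ walks. Non-atomicity of the relevant harmonic measures together with mutual independence of $w_{1,n}, \ldots, w_{k,n}$ gives that, with asymptotic probability one, the $2k$ points $\lambda_1^\pm, \ldots, \lambda_k^\pm$ are pairwise distinct, hence the $w_{i,n}$ are pairwise non-commensurable. What is actually needed is the quantitative version: that with asymptotic probability one the quasi-axes of distinct $w_{i,n}$ fellow-travel for only a bounded length --- governed by Gromov products of the limit points, which do not blow up --- while the translation lengths grow linearly, so that $\{w_{1,n}, \ldots, w_{k,n}\}$ is a Schottky family in $X$ with uniformly good constants. Granting this, a ping-pong argument on $\partial X$ shows that $H$ is freely generated by the $w_{i,n}$, and that the orbit map $H \to X$ is a quasi-isometric embedding with quasiconvex, ``geometrically separated'' image. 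In particular $H$ is torsion-free, so $H \cap E(G) = 1$ and $HE(G) = H \ltimes E(G)$; and $HE(G) \cdot x_0$ lies in a bounded neighbourhood of $H \cdot x_0$, so $HE(G)$ inherits the same quasiconvexity and separation properties in $X$.

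The third step upgrades this to a hyperbolic embedding. Here one applies the criterion of \cite{dgo} (equivalently, Sisto's characterisation via geometric separation, cf.\ \cite{sisto2}): $HE(G) \he G$ once one checks that the relative Cayley graph $\Cay(G, S \sqcup HE(G))$ is hyperbolic and that the induced metric on $HE(G)$ (measured by paths avoiding $HE(G)$ except at the endpoints) is proper. Hyperbolicity follows by coning off the quasiconvex orbit of $HE(G)$ inside $X$ and comparing with $\Cay(G, S \sqcup HE(G))$; properness of the induced metric follows from acylindricity of the action together with the Schottky property, since two elements of $HE(G)$ that were close in the coned-off graph but far in the induced metric would force an element of $G$ to coarsely fix a segment longer than the translation lengths allow. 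Equivalently, one may combine the hyperbolically embedded elementary closures $\{E_G(w_{1,n}), \ldots, E_G(w_{k,n})\}$ of the (pairwise non-commensurable, loxodromic WPD) elements $w_{i,n}$ via the combination results of \cite{dgo,osin}; since with asymptotic probability one $E_G(w_{i,n}) = \langle w_{i,n}\rangle E(G)$, the subgroup these generate is exactly $HE(G)$. Either way $HE(G) \he G$, and the quasi-isometric embedding of $H$ into $G$ with its word metric then follows from quasiconvexity of hyperbolically embedded subgroups \cite{sisto2}, as $H$ has index $|E(G)|$ in $HE(G)$.

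The principal obstacle is the quantitative probabilistic estimate of the second step: passing from ``the limit points are almost surely distinct'' to ``with asymptotic probability one the $w_{i,n}$ form a Schottky family with the constants demanded by the separation/combination criterion''. This requires uniform-in-$n$ control on the length of fellow-travelling of the quasi-axes of independent sample paths relative to their linearly growing translation lengths --- in effect a non-concentration estimate for Gromov products of independent samples, combined with deviation bounds for linear progress and sublinear tracking that force the relevant good events to have probability tending to one. Subsidiary technical points are the verification of hyperbolicity of $\Cay(G, S \sqcup HE(G))$ (coning off a quasiconvex orbit in a space on which $G$ acts but which need not be quasi-isometric to $G$) and the generic identification of the elementary closures $E_G(w_{i,n})$ with $\langle w_{i,n}\rangle E(G)$.
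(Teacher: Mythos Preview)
Your overall architecture is right and matches the paper's: random-walk estimates give Schottky-type control on the generators, whence free generation and a quasi-isometric embedding of $H$ into $X$; then one invokes a criterion to promote this to $HE(G)\hookrightarrow_h G$. The paper uses exactly the Hull/Antol\'{i}n--Minasyan--Sisto criterion (your ``Sisto characterisation''): a finitely generated subgroup that is quasi-isometrically embedded \emph{and geometrically separated} in $X$ is hyperbolically embedded.

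There is, however, a genuine gap at the heart of your second step. The Schottky/ping-pong conditions (large $d_X(x_0,a x_0)$ and small $\gp{x_0}{a x_0}{b x_0}$ for distinct $a,b$ in the symmetric generating set) give free generation and a QI-embedding, but they do \emph{not} by themselves give geometric separation of $H$ (or of $HE(G)$). The obstruction is already visible for $k=1$: the cyclic group $\langle g^2\rangle$ is Schottky and QI-embedded but not geometrically separated, since $g$ coarsely stabilises it. What is needed is control not just of Gromov products at the basepoint but of how \emph{arbitrary translates} of the segments $\gamma_i=[x_0,w_{i,n}x_0]$ can fellow-travel one another. The paper handles this with two further probabilistic ingredients beyond the Schottky estimates: (i) a ``no large match'' estimate, ruling out that a long subsegment of some $\gamma_j$ is carried by a group element into a $2\delta$-neighbourhood of some $\gamma_i$; and (ii) an ``unmatched'' estimate, ruling out that an initial piece of $\gamma_j$ matches a concatenation of a terminal piece of some $\gamma_i$ with an initial piece of some $\gamma_{i'}$. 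These, together with the fact that each $w_{i,n}$ is $(\epsilon L n)$-asymmetric (a strengthening of $E(w_{i,n})=\langle w_{i,n}\rangle\ltimes E(G)$ in which the coarse stabiliser of a fundamental domain on the axis is exactly $E(G)$, even for displacement up to $\epsilon L n$), are what drive the proof that any $g$ coarsely carrying a long geodesic in $Hx_0$ onto another must already lie in $H\ltimes E(G)$.

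Two related comments. First, the identification $E_G(w_{i,n})=\langle w_{i,n}\rangle\ltimes E(G)$ is not a ``subsidiary technical point'': it is the $k=1$ case of the theorem and is precisely what makes the separation argument go through (it is the content of an entire section of the paper, proved via ergodicity forcing the axis of $w_n$ to fellow-travel translates of the axis of a fixed weakly asymmetric element). Second, your alternative route via ``combination results'' for the hyperbolically embedded family $\{E_G(w_{1,n}),\ldots,E_G(w_{k,n})\}$ does not work as stated: having a hyperbolically embedded \emph{collection} does not imply that the subgroup it generates is hyperbolically embedded (two such cyclic subgroups can generate all of $G$). Any combination theorem strong enough to yield $HE(G)\hookrightarrow_h G$ will require hypotheses tantamount to the matching/separation estimates above. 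So the place to focus is exactly where the paper does: prove the matching estimates and the $(\epsilon L n)$-asymmetry, then deduce geometric separation directly, and finally apply the QI-plus-separation criterion.
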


The first part of Theorem \ref{theorem:main_simple} was previously
shown by Taylor and Tiozzo \cite{taylor-tiozzo}, 
and they apply this result to study random free group and surface
group extensions. The second part is definitely the main contribution
of this paper. For the experts, we note that we can fix the generating
set with respect to which $H\ltimes E(G)$ is hyperbolically embedded,
see Theorem \ref{theorem:main_gen_set}.

The study of generic properties of groups in geometric group theory
goes back at least to Gromov \cites{gromov1, gromov2}, and we make no
attempt to survey the substantial literature on this topic, see for
example \cite{gmo} for a more thorough discussion, though we now
briefly mention some closely related results.  This model of random
subgroups is used in Guivarc'h's \cite{guivarch} proof of the Tits
alternative for linear groups, and is also developed by Rivin
\cite{rivin} and Aoun \cite{aoun}, who proves that a random subgroup
of a non-virtually solvable linear group is free and undistorted.
Gilman, Miasnikov and Osin \cite{gmo} consider subgroups of hyperbolic
groups generated by $k$ elements arising from nearest neighbour random
walks on the corresponding Cayley graph, and they show that the
probability that the resulting group is a quasi-isometrically embedded
free group, freely generated by the $k$ unreduced words of length $n$,
tends to one exponentially quickly in $n$. The fact that the $k$
elements freely generate a free group as $n$ becomes large was shown
earlier for free groups by Jitsukawa \cite{jit} and Martino, Turner
and Ventura, \cite{mtv}, and for braid groups by Myasnikov and Ushakov
\cite{mu}. Our argument makes use of particular group elements which
we call strongly asymmetric, namely loxodromic elements $g$ contained
in maximal cyclic subgroups which are equal to $\< g \> \times E(G)$.
We say a loxodromic element $g$ is weakly asymmetric if it is
contained in a maximal cyclic subgroup which is a semidirect product
$\< g \> \ltimes E(G)$, see Section \ref{sec:background} for full
details.  Masai \cite{masai} has previously shown that random elements
of the mapping class group are strongly asymmetric, and the argument
we present uses similar methods in the context of acylindrically
hyperbolic groups. Mapping class groups have trivial maximal finite
normal subgroups, except for a finite list of surfaces in which $E(G)$
is central, see for example \cite[Section 3.4]{fm}, so in the case of
the mapping class groups there is no distinction between weakly and
strongly asymmetric elements.

Theorem \ref{theorem:main_simple} is used in \cite{Hartnick-Sisto} to
study the bounded cohomology of acylindrically hyperbolic groups.

\subsection{Acknowledgments}

The first author acknowledges the support of PSC-CUNY and the Simons
Foundation.

This material is based upon work supported by the National Science
Foundation under Grant No. DMS-1440140 while the authors were in
residence at the Mathematical Sciences Research Institute in Berkeley,
California, during the Fall 2016 semester.

\section{Background and main theorem}\label{sec:background}

We say a geodesic metric space $(X, d_X)$, which need not be proper,
is \emph{Gromov hyperbolic}, \emph{$\delta$-hyperbolic} or just
\emph{hyperbolic}, if there is a number $\delta \ge 0$ for which every
geodesic triangle in $X$ satisfies the \emph{$\delta$-slim triangle}
condition, i.e. for any geodesic triangle, any side is contained in the
$\delta$-neighbourhood of the other two sides. 

Let $G$ be a countable group which acts on a hyperbolic space $X$ by
isometries.  We say the action of $G$ on $X$ is \emph{non-elementary}
if $G$ contains two hyperbolic elements with disjoint pairs of fixed
points at infinity. We say a group $G$ acts \emph{acylindrically} on a
Gromov hyperbolic space $X$, if there are real valued functions $R$
and $N$ such that for every number $K \ge 0$, and for any pair of
points $x$ and $y$ in $X$ with $d_X(x, y) \ge R(K)$, there are at most
$N(K)$ group elements $g$ in $G$ such that $d_X(x, gx) \le K$ and
$d_X(y, gy) \le K$. We shall refer to $R$ and $N$ as the
\emph{acylindricality functions} for the action.  This definition is
due to Sela \cite{sela} for trees, and Bowditch \cite{bowditch} for
general metric spaces.

We say a group $G$ acts \emph{acylindrically hyperbolically} on a
space $X$, if $X$ is hyperbolic, and the action is non-elementary and
acylindrical.  A group is \emph{acylindrically hyperbolic} if it
admits an acylindrically hyperbolic action on some space $X$.

A finitely generated subgroup $H$ in $G$ is \emph{quasi-isometrically
  embedded} in $X$, if for any choice of word metric $d_H$, and any
basepoint $x_0 \in X$, there are constants $K$ and $c$ such that for
any two elements $h_1$ and $h_2$ in $H$,
\[ \frac{1}{K} d_X( h_1 x_0, h_2, x_0) - c \le d_H(h_1, h_2)
\le K d_X( h_1 x_0, h_2, x_0) + c. \]
We say that a subgroup $H$ of $G$ is \emph{geometrically separated} in
$X$, if for each $x_0\in X$ and $R\geq0$ there exists $B(R)$ so that for each $g \in G
\setminus H$, we have that the diameter of $N_R(gHx_0) \cap N_R(Hx_0)$ is
bounded by $B$, where $N_R$ denotes the metric $R$-neighborhood in
$X$.

For the remainder of this paper fix an ayclindrically hyperbolic group
$G$. We shall write $E(G)$ for the maximal finite normal subgroup of
$G$, which exists and is unique by \cite[Theorem 6.14]{dgo}.  Given an
element $g \in G$, let $E(g)$ be the maximal virtually cyclic subgroup
containing $g$, which is well-defined by work of Bestvina and Fujiwara
\cite{bestvina-fujiwara}.  For a hyperbolic element $g$, let
$\Lambda(g) = \{ \lambda_+(g), \lambda_−(g) \}$ be the set consisting
of the pair of attracting and repelling fixed points for $g$ in
$\partial X$.  We shall write $\text{stab}(\Lambda(g))$ for the
stabilizer of this set in $G$. Dahmani, Guirardel and Osin
\cite[Corollary 6.6]{dgo} show that in fact
\[ E(g) = \text{stab}(\Lambda(g)). \]
For any hyperbolic element $g$, the group
$E(g)$ is always quasi-isometrically embedded and geometrically
separated.

The subgroup $E(G)$ acts trivially on the Gromov boundary $\partial
X$, so in many applications it may be natural to consider $G / E(G)$,
which will have a trivial maximal finite subgroup, and a reader
interested in this case should feel free to assume $E(G)$ is trivial,
which simplifies the arguments and statements in many places.  We
shall write $\< g_1, \ldots, g_k \>$ for the subgroup of $H$ generated
by $\{ g_1, \ldots g_k \}$, and in particular, $\< g \>$ denotes the
cyclic group generated by $g$.  Recall that given a subgroup $H < G$,
we will write $H E(G)$ for the subset of $G$ consisting of $\{ hg \mid
h \in H, g \in E(G) \}$, which is a subgroup, as $E(G)$ is normal. If
$H\cap E(G)=\{1\}$, then the group $H E(G)$ is a finite extension of
$H$ by $E(G)$, but in general need not be either a product $H \cross
E(G)$, or a semidirect product, which we shall write as $H \ltimes
E(G)$.  The following observation is elementary, but we record it as a
proposition for future reference.

\begin{prop}\label{prop:semidirect}
Let $G$ be a countable group acting acylindrically hyperbolically on
$X$, and let $H$ be a subgroup of $G$ with trivial intersection with
$E(G)$, i.e. $H \cap E(G) = \{ 1 \}$.  Then the subgroup $H E(G)$ is a
semidirect product $H \ltimes E(G)$.
\end{prop}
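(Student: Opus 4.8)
The plan is to apply the standard internal recognition criterion for semidirect products: a group $K$ is the internal semidirect product $N \rtimes Q$ precisely when $N$ is a normal subgroup of $K$, $Q$ is a subgroup of $K$, $K = NQ$, and $N \cap Q = \{1\}$. I would take $K = HE(G)$, $N = E(G)$, and $Q = H$. First I would observe that $HE(G)$ is a subgroup of $G$ — this is already recorded in the excerpt and follows from the normality of $E(G)$ in $G$ — and that $E(G)$, being normal in $G$, is a fortiori normal in the subgroup $HE(G)$; likewise $H$ is a subgroup of $HE(G)$. By definition $HE(G) = \{ he \mid h \in H,\ e \in E(G)\}$, so $K = QN = NQ$, and the hypothesis gives $N \cap Q = E(G) \cap H = \{1\}$. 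The criterion then yields $HE(G) = H \ltimes E(G)$.

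If one prefers not to invoke the criterion as a black box, the single point to verify is that every element of $HE(G)$ has a unique expression as $he$ with $h \in H$ and $e \in E(G)$: if $h_1 e_1 = h_2 e_2$ then $h_2^{-1} h_1 = e_2 e_1^{-1} \in H \cap E(G) = \{1\}$, forcing $h_1 = h_2$ and $e_1 = e_2$. Equivalently, the surjection $HE(G) \to H$ sending $he \mapsto h$ is a well-defined group homomorphism with kernel $E(G)$, split by the inclusion $H \hookrightarrow HE(G)$, which exhibits $HE(G)$ as a semidirect product with the claimed conjugation action of $H$ on $E(G)$.

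There is essentially no obstacle here: the only content is bookkeeping, and the acylindrical hyperbolicity of $G$ enters solely through the existence of the finite normal subgroup $E(G)$, which is already granted. In fact the statement holds verbatim for any normal subgroup $N \trianglelefteq G$ and any subgroup $H \le G$ with $H \cap N = \{1\}$.
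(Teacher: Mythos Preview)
Your proof is correct and essentially identical to the paper's: the paper also observes that $(HE(G))/E(G) \cong H$ because $H \cap E(G) = \{1\}$, and that the inclusion $H \hookrightarrow HE(G)$ splits the quotient map, yielding the semidirect product. Your presentation via the internal recognition criterion is slightly more explicit, but the content is the same.
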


\begin{proof}
The quotient $( H E(G) ) / E(G)$ corresponds to the set of cosets $h
E(G)$. If $h$ is a non-trivial element of $H$ then $h E(G) \not =
E(G)$, as $H \cap E(G) = \{ 1 \}$.  Therefore, $( H E(G) ) / E(G)$ is
isomorphic to $H$, and the inclusion of $H$ into $H E(G)$ gives a
section $H \to H E(G)$, i.e. a homomorphism whose composition with the
quotient map is the identity on $H$. This implies that $H E(G)$ is a
split extension of $( H E(G) ) / E(G)$ by $E(G)$, and hence a
semidirect product $H \ltimes E(G)$.
\end{proof}

If $g$ is a hyperbolic element, then $\< g \>$ is an infinite cyclic
group, and the subgroup $E(g)$ always contains $\< g \> E(G) = \< g \>
\ltimes E(G)$, but may be larger.  For example, $E(g) = E(g^2)$ but
$E(g^2)$ cannot be equal to $\langle g^2 \rangle E(G)$, as $E(g)$
contains $g$. Furthermore, the subgroup $\< g^2 \> E(G)$ is
quasi-isometrically embedded, but not geometrically separated, as $g$
coarsely stabilizes this subgroup. We say that a group element $g \in
G$ is \emph{weakly asymmetric} if $E(g)$ is equal to $\< g \> \ltimes
E(G)$, and \emph{strongly asymmetric} when $E(g)$ is actually the
product $\< g \> \times E(G)$. Strongly asymmetric elements are
sometimes called special, though we do not use this terminology in
this paper. We note that strongly asymmetric elements always exist by \cite[Lemma 6.18]{dgo}.

\begin{example}
Let $S$ be a closed genus $2$ surface, and let $\widetilde S \to S$ be
a degree $2$ cover, so $\widetilde S$ is a closed genus $3$ surface.
Let $G$ be the mapping class group of $\widetilde S$, which has
trivial maximal finite normal subgroup, and let $T \cong \Z / 2 \Z$ be
the subgroup of $G$ consisting of covering transformations.  Any
pseudo-Anosov map $g \colon S \to S$ has a power which lifts to a map
$\widetilde g \colon \widetilde S \to \widetilde S$, which commutes
with $T$, so $\< \widetilde g\> \times T < E(\widetilde g)$, and so
$\widetilde g$ is not weakly asymmetric.
\end{example}

We now describe the particular model of random subgroups which we
shall consider.  A \emph{random subgroup} of $G$ with $k$ generators
is a subgroup whose generators are chosen to be independent random
walks of length $n_i$ on $G$. We will require the following
restrictions on the probability distributions $\mu_i$ generating the
random walks.  We say a probability distribution $\mu$ on $G$ is
non-elementary if the group generated by its support is
non-elementary.  

\begin{definition} \label{admissible} %
Let $G$ act acylindrically hyperbolically on $X$.  We say that the
probability distribution $\mu$ on $G$ is \emph{$(G \curvearrowright
  X)$-admissible} if the support of $\mu$ generates a non-elementary
subgroup of $G$ containing a weakly asymmetric element, and
furthermore, the support of $\mu$ has bounded image in $X$.
\end{definition}

The set of admissible measures depends on the action of $G$ on $X$,
though we shall suppress this from our notation and just write
\emph{admissible} for $(G \curvearrowright X)$-admissible.  We shall
write $\rmu$ for the reflected probability distribution $\rmu(g) =
\mu(g^{-1})$, and $\rmu$ is admissible if and only if $\mu$ is
admissible.

We may now give a precise definition of our model for random
subgroups.  Let $\mu_1, \ldots , \mu_k$ be a finite collection of
admissible probability distributions on $G$. We shall write $H(\mu_1,
\ldots, \mu_k, n_1, \ldots, n_k)$ to denote the subgroup generated by
$\langle w_{1, n_1}, \ldots w_{k, n_k} \rangle$, where each $w_{i,
  n_i}$ is a group element arising from a random walk on $G$ of length
$n_i$ generated by $\mu_i$. To simplify notation we shall often just
write $H(\mu_i, n_i)$, or just $H$, for $H(\mu_1, \ldots, \mu_k, n_1,
\ldots, n_k)$. We may now state our main result.

\begin{theorem} \label{theorem:main} %
Let $G$ be a countable group acting acylindrically hyperbolically on
the separable space $X$, and let $E(G)$ be the maximal finite normal
subgroup of $G$.  Let $H(\mu_1, \ldots , \mu_k, n_1, \ldots, n_k)$ be
a random subgroup of $G$, where the $\mu_i$ are admissible probability
distributions on $G$. Then the probability that each of the following
three events occurs tends to one as $\min n_i$ tends to infinity.

\begin{enumerate}

\item All of the $w_{i, n_i}$ are hyperbolic and weakly asymmetric.

\item The subgroup $H$ is freely generated by the $\{ w_{i, n_i} \}$
and quasi-isometrically embedded in $X$, and so in particular $H E(G)$
is a semidirect product $H \ltimes E(G)$.

\item The subgroup $H \ltimes E(G)$ is geometrically separated in $X$.

\end{enumerate}

\end{theorem}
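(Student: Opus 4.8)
The plan is to establish the three events in the stated order, with each part building on the previous. The central technical tool will be the theory of random walks on groups acting acylindrically hyperbolically on a space $X$: by results going back to work of Maher–Tiozzo and others, a random walk sample path $w_{i,n_i}$ tracks a geodesic in $X$ with positive speed and with exponentially decaying probability of deviation, so that with asymptotic probability one $w_{i,n_i}$ is a hyperbolic (loxodromic) isometry whose translation length in $X$ grows linearly in $n_i$. For part (1), I would supplement this with the "strongly asymmetric" argument adapted from Masai: one shows that for a random walk, with asymptotic probability one the element $w_{i,n_i}$ is weakly asymmetric, i.e. $E(w_{i,n_i}) = \langle w_{i,n_i}\rangle \ltimes E(G)$. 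The admissibility hypothesis — that the support of $\mu_i$ generates a non-elementary subgroup \emph{containing a weakly asymmetric element} — is exactly what is needed here: one uses that weakly asymmetric elements exist in the semigroup generated by the support, together with the fact that $E(g)$ is "almost malnormal" and quasi-isometrically embedded, to show that a typical long word cannot be conjugate into a proper virtually cyclic overgroup of $\langle w_{i,n_i}\rangle E(G)$. Concretely, if $E(w_{i,n_i})$ were strictly larger than $\langle w_{i,n_i}\rangle\ltimes E(G)$, then $w_{i,n_i}$ would be a proper power modulo $E(G)$, or $\langle w_{i,n_i}\rangle E(G)$ would have infinite index in $E(w_{i,n_i})$; one argues this forces $w_{i,n_i}$ to lie in a thin exceptional set by a counting/comparison argument against the known behaviour of the random walk.

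For part (2), the strategy is a ping-pong argument on the Gromov boundary $\partial X$. Since each $w_{i,n_i}$ is loxodromic with attracting/repelling fixed points $\lambda_\pm(w_{i,n_i})$, and since the $k$ random walks are independent, with asymptotic probability one the $2k$ fixed points are all distinct and, more strongly, the "axes" of the $w_{i,n_i}$ in $X$ are pairwise far apart for long stretches (this again uses linear progress plus independence, and an estimate that two independent sample paths fellow-travel only for a logarithmically short time). Standard ping-pong then shows $\langle w_{1,n_1},\dots,w_{k,n_k}\rangle$ is free of rank $k$ and that the orbit map to $X$ is a quasi-isometric embedding — the quasigeodesic constants are controlled by the translation lengths and the fellow-traveling bound. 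Freeness plus the growth of translation lengths gives $H\cap E(G)=\{1\}$ (an element of $H\cap E(G)$ would be a nontrivial free group element acting with bounded orbits, contradicting the quasi-isometric embedding), so Proposition \ref{prop:semidirect} yields $H E(G) = H\ltimes E(G)$.

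For part (3), geometric separation of $H\ltimes E(G)$, the idea is to combine the geometric separation of each individual $E(w_{i,n_i})$ (which holds since $E(w_{i,n_i})=\mathrm{stab}(\Lambda(w_{i,n_i}))$ and such stabilizers are geometrically separated) with the ping-pong/quasiconvexity structure of $H$. One shows that $N_R(gHx_0)\cap N_R(Hx_0)$ of large diameter would force a long fellow-traveling segment, which by the tree-like (quasi-tree) structure of the $H$-orbit must be close to a coset of some $\langle w_{i,n_i}\rangle$-axis; geometric separation of $E(w_{i,n_i})$ then pins $g$ into $E(w_{i,n_i})\cap \ldots$, and the almost-malnormality/weak-asymmetry from part (1) forces $g\in H\ltimes E(G)$. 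I expect the main obstacle to be part (3): assembling the local geometric separation statements for the cyclic pieces into a global statement for the whole free group requires a careful quasiconvexity argument for quasi-isometrically embedded free subgroups (of the type in \cite{sisto2}), controlling how a fellow-traveling segment between $Hx_0$ and $gHx_0$ decomposes along the Bass–Serre-tree structure of $H$ and ensuring the error terms from the $2k$ different axes do not accumulate. Establishing weak asymmetry genericly in part (1) is the second most delicate point, since it is precisely the place where the admissibility hypothesis is used and where the $E(G)$-bookkeeping (the distinction between $\langle g\rangle\times E(G)$ and $\langle g\rangle\ltimes E(G)$) cannot be swept under the rug.
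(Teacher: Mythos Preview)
Your high-level architecture matches the paper's: part (1) for each $i$ separately, part (2) via a Schottky argument from Gromov-product bounds, and part (3) by reducing a long fellow-traveling segment between $Hx_0$ and $gHx_0$ to a single generator edge and invoking the asymmetry from part (1). Two genuine gaps, however, separate your sketch from a proof.

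First, the mechanism you propose for part (1) (``a counting/comparison argument'' showing $w_n$ is not a proper power modulo $E(G)$) is not what carries the argument. The paper's engine is a family of \emph{matching estimates} for the geodesic $\gamma_n = [x_0, w_n x_0]$: with asymptotic probability one, $\gamma_n$ has no self-match of length $\epsilon|\gamma_n|$ (no subsegment of linear length is carried by some $g\in G$ to within $O(\delta)$ of a disjoint subsegment), while $\gamma_n$ \emph{does} match a fixed-length segment of the axis of the weakly asymmetric element promised by admissibility. Irreversibility, coarse primitivity, and $K$-asymmetry (the paper's coarse replacements for ``not a proper power, not conjugate to its inverse, and $E(w_n)=\langle w_n\rangle\ltimes E(G)$'') are then read off directly from these matching facts together with the Bestvina--Fujiwara lemma that long coarse stabilization of an axis forces membership in $E(g)$. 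A bare counting argument does not see why, say, $w_n$ fails to be a square with high probability; the no-self-match estimate does.

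Second, in part (3) your plan to invoke geometric separation of each $E(w_{i,n_i})$ as a black box does not close, for a quantitative reason. The quasi-isometric embedding $\Gamma_H\hookrightarrow X$ has additive constant $O(\epsilon Ln)$ (coming from the Gromov-product bounds), so after transferring a fellow-traveling segment in $X$ to the tree $\Gamma_H$ the fellow-traveling constant is $O(\epsilon Ln)$, growing with $n$. You therefore need not merely that $w_{j,n_j}$ is weakly asymmetric, but that it is $(\epsilon Ln)$-\emph{asymmetric}: any $g$ that $(\epsilon Ln)$-stabilizes a fundamental domain of its axis already lies in $E(G)$. The paper proves this as a separate strengthening (Proposition~\ref{prop:n-asymmetric}), again via the no-self-match estimate: if such a $g$ moved points by more than $O(\delta)$ it would be loxodromic with $\tau(g)\le \epsilon Ln$ and axis fellow-traveling $\gamma_n$, forcing a linear-size self-match. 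The same matching technology is what lets you align the long segment with a \emph{single} translate $h\gamma_j$ rather than a concatenation of shorter edges (the paper's ``no $\epsilon$-large match'' and ``$(3\epsilon Ln)$-unmatched'' conditions); your Bass--Serre intuition is correct, but making it precise requires exactly these estimates, and your remark about error accumulation is pointing at the right issue without yet resolving it.
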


In general the semidirect product $H \ltimes E(G)$ need not be the
product $H \times E(G)$ for random subgroups $H$, as shown below.

\begin{example}
Let $G$ be a group acting acylindrically hyperbolically on $X$ with
trivial maximal finite normal subgroup $E(G)$, which admits a split
extension
\[ 1 \to F \to G^+ \to G \to 1, \]
which is not a product, for some finite group $F$.  Such a split
extension is determined by a homomorphism $\phi \colon G \to
\text{Aut}(F)$, where $\text{Aut}(F)$ is the automorphism group of
$F$.  The maximal finite normal subgroup $E(G)$ is equal to $F$.  A
random walk on $G^+$ pushes forward to a random walk on $G$, and then
to a random walk on $\phi(G) < \text{Aut}(F)$. As $\phi(G)$ is finite,
the random walk is asymptotically uniformly distributed. A hyperbolic
group element $g$ has $E(g) = \langle g \rangle \cross F$ if and only
if the image of $g$ in $\phi(G)$ is trivial, which happens with
asymptotic probability $1 / \norm{\phi(G)}$.
\end{example}

In the next section, Section \ref{section:embedded}, we recall the
definition of a hyperbolically embedded subgroup, and show how Theorem
\ref{theorem:main_simple} follows from Theorem \ref{theorem:main}.

\subsection{Hyperbolically embedded subgroups}
\label{section:embedded}

Osin \cite{osin} showed that if a group is acylindrically hyperbolic,
then there is a (not necessarily finite) generating set $Y$, such that
the Cayley graph of $G$ with respect to $Y$, which we shall denote
$\Cay(G, Y)$, is hyperbolic, and the action of $G$ on $\Cay(G, Y)$ is
acylindrical and non-elementary. In general, there are many choices of
$Y$ giving non-quasi-isometric acylindrically hyperbolic actions, for
which different collections of subgroups will be hyperbolically
embedded, but for the remainder of this section we shall assume we
have chosen some fixed $Y$.

Let $H$ be a subgroup of $G$; we will write $\Cay(G, Y \sqcup H)$ for the
Cayley graph of $G$ with respect to the disjoint union of $Y$ and
$H$ (so it might have double edges). The Cayley graph $\Cay(H, H)$ is a complete subgraph of $\Cay(G,
Y \sqcup H)$. We say a path $p$ in $\Cay(G, Y \sqcup H)$ is
\emph{admissible} if it does not contain edges of $\Cay(H, H)$, though
it may contain edges of non-trivial cosets of $H$, and may pass
through vertices of $H$. We define a restriction metric $\hat{d}_{H}$
on $H$ be setting $\hat{d}_{H}(h_1, h_2)$ to be the minimal length of
any admissible path in $\Cay(G, Y \sqcup H)$ connecting $h_1$ and
$h_2$. If no such path exists we set $\hat{d}_{H}(h_1, h_2) =
\infty$.

We say a finitely generated subgroup $H$ of a finitely generated
acylindrically hyperbolic group $G$ is \emph{hyperbolically embedded}
in $G$ with respect to a generating set $Y \subset G$ if the Cayley
graph $\Cay(G, Y \sqcup H)$ is hyperbolic, and $\hat{d}_{H}$ is
proper. We shall denote this by $H \he (G,Y)$.

We shall use the following sufficient conditions for a subgroup to be
hyperbolically embedded, due to Hull \cite[Theorem 3.16]{Hull} and
Antolin, Minasyan and Sisto \cite[Theorem 3.9, Corollary 3.10]{AMS}
(both refinements of Dahmani, Guirardel and Osin \cite[Theorem
4.42]{dgo}), which we now describe.

\begin{theorem} \cites{Hull, AMS} \label{theorem:sufficient} %
Suppose that $G$ acts acylindrically hyperbolically on $X$.  Let $H$
be a finitely generated subgroup of $G$, which is quasi-isometrically
embedded and geometrically separated in $X$. Then $H$ is
hyperbolically embedded in $G$. Moreover, if $X=\Cay(G,Y)$ for some
$Y\subseteq G$, then $H\he (G,Y)$.
\end{theorem}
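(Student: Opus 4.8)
The plan is to deduce the statement from the criterion for hyperbolic embeddedness of Dahmani, Guirardel and Osin \cite[Theorem 4.42]{dgo}, in the refined form of Hull \cite[Theorem 3.16]{Hull} and Antolin, Minasyan and Sisto \cite[Theorem 3.9, Corollary 3.10]{AMS}: one must produce the two ingredients these criteria demand, namely hyperbolicity of the coned-off Cayley graph $\Cay(G, Y \sqcup H)$ and properness of the restriction metric $\hat{d}_H$. Here the two hypotheses play complementary roles, the quasi-isometric embedding controlling the shape of the cosets of $H$ and geometric separation controlling their arrangement. I would first reduce to the case $X = \Cay(G,Y)$, which is also the content of the ``Moreover'' clause: by Osin's theorem there is a (possibly infinite) generating set $Y$ for which $\Cay(G,Y)$ is hyperbolic with $G$ acting acylindrically and non-elementarily, and being quasi-isometrically embedded and being geometrically separated both transfer from a general $X$ to $\Cay(G,Y)$ along the $G$-equivariant quasi-isometry given by the orbit map. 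So assume $X = \Cay(G,Y)$.

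For hyperbolicity, note that $\Cay(G, Y \sqcup H)$ is obtained from $\Cay(G,Y)$ by coning off the $G$-invariant family of cosets $\{ gH : g \in G \}$, since the adjoined $H$-edges collapse each coset to bounded diameter. Because $H$ is quasi-isometrically embedded, each orbit $gHx_0$ is uniformly quasiconvex in the hyperbolic space $\Cay(G,Y)$, and because $H$ is geometrically separated, these sets have uniformly bounded pairwise coarse overlap. Coning off a $G$-invariant family of uniformly quasiconvex, boundedly overlapping subsets of a hyperbolic space again yields a hyperbolic space, by the cone-off estimates underlying \cite[Theorem 4.42]{dgo}, giving the hyperbolicity of $\Cay(G, Y \sqcup H)$.

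For properness of $\hat{d}_H$, fix $N$ and bound the set of $h \in H$ with $\hat{d}_H(1,h) \le N$. Let $p$ be an admissible path from $1$ to $h$ of length at most $N$; realize it in $X = \Cay(G,Y)$ and project it onto the quasiconvex orbit $Hx_0$ by nearest-point projection. A $Y$-edge of $p$ has length one in $X$ and so advances this projection by a bounded amount. Every other edge of $p$ lies in a coset $g_iH$ with $g_i \notin H$, since admissibility forbids edges of $\Cay(H,H)$; although such an edge may move far in $X$, geometric separation together with quasiconvexity forces the projection of $g_iHx_0$ onto $Hx_0$ to have uniformly bounded diameter, for otherwise $N_R(g_iHx_0) \cap N_R(Hx_0)$ would have diameter exceeding $B(R)$, so the two endpoints of the edge, both lying in $g_iHx_0$, project to within a bounded distance on $Hx_0$. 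Summing over the at most $N$ edges bounds the total advance of the projection, which, since $h \in H$, is coarsely $d_X(x_0, hx_0)$; the quasi-isometric embedding then bounds the word length of $h$, leaving only finitely many possibilities.

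The main obstacle is establishing, uniformly in $g_i \notin H$, that geometric separation controls the projection of $g_iHx_0$ onto $Hx_0$: a priori a single admissible path may thread through many distinct cosets and repeatedly graze $Hx_0$ without lying on it, so one must convert the per-coset overlap bound $B(R)$ into a uniform bound on projection diameters and then into genuine coarse monotonicity of the projection along $p$. This quantitative bounded-penetration estimate, together with the cone-off hyperbolicity used above, is precisely what is assembled in \cite[Theorem 3.9, Corollary 3.10]{AMS} and \cite[Theorem 3.16]{Hull}, which I would invoke rather than reprove from the definitions.
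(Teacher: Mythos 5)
The paper offers no proof of this statement at all---it is quoted directly from Hull and Antolin--Minasyan--Sisto---and your proposal ultimately does the same thing, invoking \cite[Theorem 3.16]{Hull} and \cite[Theorem 3.9, Corollary 3.10]{AMS} as black boxes after sketching the cone-off and projection arguments that underlie them, so the approach is essentially identical. One small caveat: your preliminary reduction to $X=\Cay(G,Y)$ via ``the $G$-equivariant quasi-isometry given by the orbit map'' is not available for a general acylindrical action, which need not be cobounded (and the paper itself notes that different choices of $Y$ give non-quasi-isometric actions), but this is harmless here because the cited results apply to an arbitrary hyperbolic $X$ directly, with the Cayley-graph case only needed for the ``Moreover'' clause.
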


Theorem \ref{theorem:main_simple} now follows immediately from Theorem
\ref{theorem:main} and Theorem \ref{theorem:sufficient}, choosing $X$
to be $\Cay(G, Y)$. In fact, we have the following refinement, which
we record for future reference:

\begin{theorem}\label{theorem:main_gen_set}
Let the finitely generated group $G$ act acylindrically hyperbolically
on its Cayley graph $\Cay(G,Y)$, and let $E(G)$ be the maximal finite
normal subgroup of $G$.  Let $H(\mu_1, \ldots , \mu_k, n_1, \ldots,
n_k)$ be a random subgroup of $G$, where the $\mu_i$ are admissible
probability distributions on $G$. Then the probability that each of
the following events occurs tends to one as $\min n_i$ tends to
infinity.

\begin{enumerate}

\item The subgroup $H$ is freely generated by the $\{ w_{i, n_i} \}$,
and in particular $H E(G)$ is a semidirect product $H \ltimes E(G)$.

\item $H \ltimes E(G)\he (G,Y)$.

\end{enumerate}
\end{theorem}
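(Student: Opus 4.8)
The plan is to derive Theorem~\ref{theorem:main_gen_set} from Theorem~\ref{theorem:main} and Theorem~\ref{theorem:sufficient}, applied with $X=\Cay(G,Y)$; the substance is therefore to establish the three assertions of Theorem~\ref{theorem:main}. Granting these: part~(1) makes each $w_{i,n_i}$ loxodromic, hence of infinite order; part~(2) makes $H$ freely generated by the $w_{i,n_i}$ and quasi-isometrically embedded, and since $H$ is then torsion-free and $E(G)$ is finite we get $H\cap E(G)=\{1\}$, whence $HE(G)=H\ltimes E(G)$ by Proposition~\ref{prop:semidirect}; part~(3) makes $H\ltimes E(G)$ geometrically separated. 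As $E(G)$ is finite, $H\ltimes E(G)$ is finitely generated (by the $w_{i,n_i}$ together with $E(G)$) and lies at bounded Hausdorff distance from $H$ in $\Cay(G,Y)$, so it is quasi-isometrically embedded as well. Theorem~\ref{theorem:sufficient} then gives $H\ltimes E(G)\he(G,Y)$, which with freeness is exactly Theorem~\ref{theorem:main_gen_set}. It remains to sketch Theorem~\ref{theorem:main}.

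For part~(1) I would first use the now-standard fact that, for a non-elementary random walk whose support has bounded image in a (possibly non-proper) hyperbolic space, with asymptotic probability one $w_n$ is loxodromic, its displacement and translation length grow linearly in $n$, and the sample path sublinearly tracks a geodesic $[x_0,w_nx_0]$. For weak asymmetry I would follow Masai's method \cite{masai}: for loxodromic $w_n$, $E(w_n)=\text{stab}(\Lambda(w_n))$ is virtually cyclic and contains $\langle w_n\rangle E(G)$, and $w_n$ is weakly asymmetric exactly when this containment is an equality; the possible obstructions are an element of $E(w_n)$ reversing the pair $\Lambda(w_n)$, or $w_n$ being a proper power modulo $E(G)$. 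By admissibility, fix a weakly asymmetric element $g_0$ in the subgroup generated by the support of $\mu$; with asymptotic probability one a long central window of the $X$-path of $w_n$ fellow-travels a translate of the axis of $g_0$, and any such extra symmetry of $w_n$ would induce one of that window, contradicting the geometric separation of $E(g_0)$ (it would produce some $\gamma\notin E(g_0)$ with $N_R(\gamma E(g_0)x_0)\cap N_R(E(g_0)x_0)$ of unbounded diameter). Proper powers are excluded similarly, also using that $w_n$ differs from $w_{n\pm1}$ in a way incompatible with periodicity.

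Part~(2) is the theorem of Taylor and Tiozzo \cite{taylor-tiozzo} (and of Gilman, Miasnikov and Osin \cite{gmo} for hyperbolic $G$), which I would reprove by ping-pong. With asymptotic probability one the $w_{i,n_i}$ are loxodromic with translation lengths at least $\ell\min_i n_i$ for some fixed $\ell>0$, their sample paths track their geodesics, and their fixed-point pairs $\Lambda(w_{i,n_i})$ are in general position with each other and with $x_0$, meaning the relevant coarse nearest-point projections are uniformly bounded. These are precisely the hypotheses of the usual local-to-global criterion: applying a reduced word $w_{i_1,n_{i_1}}^{\pm1}\cdots w_{i_m,n_{i_m}}^{\pm1}$ to $x_0$ produces a path which is a uniform quasigeodesic, since each syllable is long while the Gromov product at each junction is uniformly bounded. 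Thus the map from the $2k$-regular tree $\Cay(F_k,\text{standard generators})$ to $X$ induced by the $w_{i,n_i}$ is a quasi-isometric embedding, which gives both freeness and quasi-isometric embeddedness of $H$.

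Part~(3) is the main new point and, I expect, the chief obstacle. Using part~(2), fix an $H$-equivariant quasi-isometric embedding of the tree $T_H=\Cay(H,\{w_{i,n_i}\})$ into $X$, so that $Z:=Hx_0$ is a quasi-isometrically embedded tree and $(H\ltimes E(G))x_0$ lies at bounded Hausdorff distance from $Z$. Suppose $\gamma\in G\setminus(H\ltimes E(G))$ and $\diam(N_R(\gamma Z)\cap N_R(Z))$ exceeds a threshold to be specified. A long geodesic of $X$ lying in $N_{R'}(\gamma Z)\cap N_{R'}(Z)$ projects --- by hyperbolicity and the quasi-isometric embeddedness of $Z$ and of $\gamma Z$ --- to long fellow-travelling geodesics of $Z$ and of $\gamma Z$, which lift to long reduced paths in $T_H$ and in $\gamma T_H$; translating, this yields reduced words $u,v\in H$ of length proportional to the diameter and an element $\gamma'$ with $[x_0,ux_0]$ and $[\gamma'x_0,\gamma'vx_0]$ fellow-travelling for a long time. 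The syllable structure from part~(2) then forces the syllable patterns of these two quasigeodesics to agree along the overlap, which on passing to the boundary forces $\gamma'$ to carry a nondegenerate portion of the limit set of $H$ correctly onto another such portion; combined with the geometric separation of the $E(w_{i,n_i})$ and the general-position properties above, $\gamma'$ must then coarsely preserve $Z$, whence $\gamma'$, and hence $\gamma$, lies in $H\ltimes E(G)$ --- a contradiction once the threshold is exceeded. The two delicate points, where I expect most of the work to lie, are (i) extracting from a long geodesic in the quasi-tree $Z$ a genuine geodesic line, assembled from translates of the generators' axes, against which the matching argument is rigid, and (ii) verifying that the threshold, hence the separation function $B(R)$, is finite for the fixed (random) sample while all the general-position events invoked above hold with asymptotic probability one.
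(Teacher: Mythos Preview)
Your reduction of Theorem~\ref{theorem:main_gen_set} to Theorem~\ref{theorem:main} together with Theorem~\ref{theorem:sufficient}, with $X=\Cay(G,Y)$, is exactly the paper's argument, and your sketches of parts~(1) and~(2) of Theorem~\ref{theorem:main} are broadly in line with Sections~\ref{section:k=1} and~\ref{section:schottky}.

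The sketch of part~(3) has a real gap, in two related places. First, the assertion that ``the syllable patterns of these two quasigeodesics agree along the overlap'' is not automatic; it is exactly what requires proof. The paper secures it through two probabilistic matching estimates (Propositions~\ref{prop:large match} and~\ref{prop:unmatch}): with high probability no $\gamma_j$ has a translate lying in a $2\delta$-neighbourhood of a shorter $\gamma_i$, and no initial window of any $\gamma_j$ matches a concatenation $\eta(i,i',3\e L n,\ell)$ consisting of a tail of $\gamma_i$ followed by a head of $\gamma_{i'}$. Without these, a translate of the longest edge $\gamma_j$ appearing in $g\widehat\gamma'$ could fellow-travel an offset concatenation of shorter edges in $\widehat\gamma$, and no syllable alignment follows. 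Your delicate point~(i) gestures at this, but does not isolate the needed estimates.

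Second, and more seriously, the quasi-isometry constants of the embedding $\Gamma_H\hookrightarrow X$ are $(6,O(\e L n))$, not uniform in $n$; hence when $\widehat\gamma$ and $g\widehat\gamma'$ are compared, the fellow-travel constant is $O(\e L n)$. After alignment one obtains $h,h'\in H$ with $h'^{-1}gh$ coarsely stabilizing $[p,w_{j,n_j}p]$, but with coarse constant $\e L n+O(\delta)$. Invoking ``geometric separation of $E(w_{j,n_j})$'' (equivalently weak asymmetry, or $K$-asymmetry for any \emph{fixed} $K$) is not enough here. The paper instead proves and uses the stronger statement that $w_{j,n_j}$ is $(\e L n)$-asymmetric with asymptotic probability one (Proposition~\ref{prop:n-asymmetric}); this in turn requires a further self-matching argument on $\gamma_n$ to rule out a short periodic structure. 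Your ``passing to the boundary'' heuristic does not substitute for this step: the overlap is a finite window of length comparable to a single edge, and no limit-set rigidity is available from it.
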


\subsection{Outline}

We conclude this section with a brief outline of the rest of the
paper, using the notation of Theorem \ref{theorem:main}. In the final
part of this section, Section \ref{section:notation}, we recall some
basic concepts and define some notation. In Section
\ref{section:estimates} we review some estimates for the behaviour of
random walks. In Section \ref{section:exp} we review some exponential
decay estimates that we will use, including the fact that a random
walk makes linear progress in $X$, with the probability of a linearly
large deviation tending to zero exponentially quickly, an estimate for
the Gromov product of the initial and final point of a random walk,
based at an intermediate point, and the property that the hitting
measure of a shadow set in $X$ decays exponentially in its distance
from the basepoint.

In Section \ref{section:match} we review some matching estimates,
which we now describe. We say two geodesics $\alpha$ and $\beta$ in
$X$ have an $(A, B)$-match, if there is a subgeodesic of one of length
$A$ which has a translate by some element of $G$ which $B$-fellow
travels the other one; we say that a single geodesic $\gamma$ has an
$(A, B)$-match if there is a subgeodesic of $\gamma$ of length $A$
which $B$-fellow travels a disjoint subgeodesic.  If the constant $B$
can be chosen to only depend on $\delta$, the constant of
hyperbolicity, then we may refer to an $(A, B)$-match as a match of
length $A$.  Let $\gamma_n$ be a geodesic in $X$ from $x_0$ to $w_n
x_0$. For any geodesic $\eta$, the probability that $\eta$ has a
match of length $\norm{\eta}$ with $\gamma_n$ decays exponentially in
$\norm{ \eta }$, and this can be used to show that the probability
that $\gamma_n$ has a match of linear length (with itself) tends to
zero as $n$ tends to infinity. We will also use the facts that if
$\gamma_\omega$ is the bi-infinite geodesic determined by a
bi-infinite random walk, and $\alpha_n$ is an axis for $w_n$, assuming
$w_n$ is hyperbolic, then the probability that $\gamma_n,
\gamma_\omega$ and $\alpha_n$ have matches of a size which is linear
in $n$, tends to $1$ as $n$ tends to infinity. Finally, we also use
the fact that for any group element $g$ in the support of $\mu$ with
axis $\gamma_g$, ergodicity implies that the bi-infinite geodesic
$\gamma_\omega$ has infinitely many matches with $\gamma_g$ of
arbitrarily large length.

In Section \ref{section:schottky} we recall some standard results
about free subgroups of a group $G$ acting by isometries on a Gromov
hyperbolic space $X$. In particular, as shown by, e.g., Taylor and
Tiozzo \cite{taylor-tiozzo}, if a subgroup $H$ has a symmetric
generating set $A = \{ a_1, a_1^{-1}, \ldots a_k, a_k^{-1} \}$, for
which the distances $d_X(x_0, a x_0)$ are large, for all $a$ in $A$,
and the Gromov products $\gp{x_0}{a x_0}{b x_0}$ are small, for all
distinct $a$ and $b$ in $A$, then $\{a_1,\ldots,a_k\}$ freely
generates a free group $H$, which is quasi-isometrically embedded in
$X$. We show that furthermore, if $\Gamma_H$ is a rescaled copy of the
Cayley graph, in which an edge corresponding to $a \in A$ has length
$d_X( x_0, a x_0)$, then $\Gamma_H$ is quasi-isometrically embedded in
$H$, with quasi-isometry constants depending only on $\delta$ and the
size of the largest Gromov product $\gp{x_0}{a x_0}{ b x_0}$, and not
on the lengths of the geodesics $[x_0, a x_0]$, for $a \in A$.

In Section \ref{section:k=1} we prove a version of Theorem
\ref{theorem:main} in the case that the group has a single generator,
i.e. $k=1$, which is equivalent to showing that the probability that
$w_n$ is hyperbolic and weakly asymmetric tends to one with asymptotic
probability one.  In Section \ref{section:asymmetric} we define coarse
analogues of the following properties of group elements: being
primitive and being asymmetric, and we show that these conditions are
sufficient to show that a group element is weakly asymmetric, as long
as it is not conjugate to its inverse. Then in Section
\ref{section:probability}, we use the matching estimates to show that
the coarse analogues hold with asymptotic probability one, as does the
property that $w_n$ is not conjugate to its inverse.

A key step is to use the fact that the support of $\mu$ contains a
weakly asymmetric element, $g$ say, with axis $\alpha_g$.  A result of
Bestvina and Fujiwara \cite{bestvina-fujiwara} says that if a group
element $h$ coarsely stabilizes a sufficiently long segment of
$\alpha_g$, then in fact $h$ lies in $E(g)$.  Ergodicity implies that
the bi-infinite geodesic $\gamma_\omega$ fellow travels infinitely
often with long segments of translates of $\alpha_g$, and the matching
estimates then imply that the axis $\alpha_n$ for $w_n$ also fellow
travels with long segments of translates of $\alpha_g$, with
asymptotic probability one. Therefore an element $h \in E(g)$ which
coarsely fixes $\alpha_n$ pointwise must also stabilize disjoint
translates of long segments of $\alpha_g$, $h_1 \alpha_g$ and $h_2
\alpha_g$ say. This implies that $h$ lies in $( h_1 \< g \> h_1^{-1} 
\ltimes ( E(g) ) \cap ( h_2 \< g \> h_2^{-1} \ltimes E(g) ) = E(g)$, and
so $w_n$ is weakly asymmetric.

Finally, in Section \ref{section:many}, we extend this result to
finitely generated random subgroups. Let $w_{i, n_i}$ be the
generators of $H$, and let $\gamma_{i, n_i}$ be a geodesic from $x_0$
to $w_{i, n_i} x_0$. The random walk corresponding to each generator
makes linear progress, and pairs of independent random walks satisfy
an exponential decay estimate for the size of their Gromov products
based at the basepoint $x_0$, so this shows that $H$ is asymptotically
freely generated by the locations of the sample paths $w_{i, n_i}$,
and is quasi-isometrically embedded in $X$. If $H$ is not
geometrically separated, then there is an arbitrarily large
intersection of $N_R(g H)$ and $N_R(H)$, which implies that there is a
pair of long geodesics $\gamma$ and $\gamma'$ with endpoints in $H$
such that $g \gamma'$ fellow travels with $\gamma$, for $g \in G
\setminus ( H \ltimes E(G) )$. The probability that $\gamma_{i, n_i}$
matches any combination of shorter generators tends to zero, so for
some $i$, the group element $g$ takes some translate $h_1 \gamma_{i,
  n_i}$ say, to another translate $h_2 \gamma_{i, n_i}$. This implies
that $h_2^{-1} g h_1$ coarsely stabilizes $\gamma_{i, n_i}$, and so
$g$ lies in $h_2 ( \< w_{i, n_i}\> \ltimes E(G) ) h_1 \subset H
\ltimes E(G)$, by the fact that each individual random walks gives
weakly asymmetric elements with asymptotic probability one. This
contradicts our initial assumption that $g$ did not lie in $H \ltimes
E(G)$.

\subsection{Notation and standing assumptions} 
\label{section:notation}

Throughout the paper we fix a group $G$ acting acylindrically
hyperbolically on a separable hyperbolic space $X$.  We will always
assume that the hyperbolic space $X$ is geodesic, but it need not be
locally compact.  We denote the distance in $X$ by $d_X$, and $\delta$
will refer to the constant of hyperbolicity for the Gromov hyperbolic
space $X$.  We will write $O(\delta)$ to refer to a constant which
only depends on $\delta$, through not necessarily linearly.  We shall
write $\norm{\gamma}$ for the length of a path $\gamma$. If $\gamma$
is a geodesic, then $\norm{\gamma}$ is equal to the distance between
its endpoints. Geodesics will always have unit speed
parameterizations, and $\gamma(t)$ will denote a point on $\gamma$
distance $t$ from its initial point, and we will write $[\gamma(t),
\gamma(t')]$ for the subgeodesic of $\gamma$ from $\gamma(t)$ to
$\gamma(t')$.

In all statements about a single random walk on $G$, we will assume
that the random walk is generated by an admissible probability measure
$\mu$ and denote the position of the walk at time $n$ by $w_n$, while
the corresponding notations for multiple random walks will be $\mu_i$
for the admissible measures and $w_{i,n_i}$ for the locations of the
random walks.

If we say that a constant $A$ depends on an admissible probability
measure $\mu$, then as the set of admissible measures depends on the
action of $G$ on $X$, we allow that $A$ may also depend on the action,
and also on the constant of hyperbolicity $\delta$, and the
acylindricality functions $R(K)$ and $N(K)$.  If we say that a
constant $A$ depends on the collection of probability distributions
$\mu_1, \ldots, \mu_k$ corresponding to a random subgroup $H$, this
includes the possibility that the constant may depend on the number
$k$ of probability distributions.  We will occasionally recall some of
these assumptions and notations.

\section{Estimates for random walks} \label{section:estimates}

\subsection{Exponential decay} \label{section:exp}

Let $\mu$ be an admissible probability distribution on $G$. We will
make use of the following exponential decay estimates, as shown by
Maher and Tiozzo \cite{mt} and Mathieu and Sisto \cite{ms}. We denote the Gromov product by
$\gp{w}{x}{y}$, which by definition is
\[ \gp{w}{x}{y} = \tfrac{1}{2} ( d_X(w, x) + d_X(w, y) - d_X(x, y)
). \]
Furthermore, as the distance the sample path has moved in $X$ is
subadditive, the limit
\[ L = \lim_{n \to \infty} \tfrac{1}{n} d_X(x_0, w_n x_0) \]
exists almost surely, and $L$ is the same for almost all sample paths,
by ergodicity.  If $\mu$ is admissible then $L$ is positive, and we
say that the random walk has positive drift, or makes linear progress.

Given $x_0,x\in X$ and $R> 0$, the \emph{shadow} $S_{x_0}(x,R)$ is defined to be
$$S_{x_0}(x,R)=\{y\in X : \gp{x_0}{x}{y}\geq d_X(x_0,x)-R\}.$$

\begin{prop} 
\label{prop:exp decay} 
\label{prop:exp drift} 
\label{prop:exp gromov}
\label{prop:exp shadows}
Let $G$ be a countable group which acts acylindrically hyperbolically
on a separable space $X$ with basepoint $x_0$, and let $\mu$ be an
admissible probability distribution on $G$. Then the following
exponential decay estimates hold:

\begin{enumerate}[{\ref{prop:exp decay}.1}]

\item \label{item:exp drift} %
Positive drift in $X$ with exponential decay. 

There is a positive drift constant $L > 0$ such that for any $\e > 0$
there are constants $K > 0$ and $c < 1$, depending on $\mu$ and $x_0$,
such that
\begin{equation} \label{eq:exp drift} %
\P \left( (1 - \e) L n \le d_X(x_0, w_n x_0) \le (1 + \e) L n \right)
\ge 1 - K c^n,
\end{equation}
for all $n$.

\item \label{item:exp gromov} %
Exponential decay for Gromov products in $X$.

There are constants $K > 0$ and $c < 1$, depending on $\mu$ and $x_0$,
such that for all $i, n$ and $r$,
\begin{equation} \label{eq:exp gromov} %
\P ( \gp{w_i x_0}{x_0}{w_n x_0} \ge r) \le Kc^r
\end{equation}

\item \label{item:exp shadows}
Exponential decay for shadows in $X$.

There is a constant $R_0 > 0$, which only depends on the action of $G$
on $X$, and constants $K > 0$ and $c < 1$, which depend on $\mu$ and
$x_0$, such that for all $g$ and $R \ge R_0$,
\begin{equation} \label{eq:exp shadows} %
\P( w_n \in S_{x_0}(g x_0, R) ) \le Kc^{d_X(x_0, g x_0) - R}.
\end{equation}

\end{enumerate}

\end{prop}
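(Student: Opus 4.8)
The statement to prove is Proposition~\ref{prop:exp decay}, which collects three exponential decay estimates for random walks on groups acting acylindrically hyperbolically on a separable hyperbolic space: positive drift with exponential decay of deviations, exponential decay of Gromov products, and exponential decay of shadow hitting measures. These are attributed to Maher--Tiozzo \cite{mt} and Mathieu--Sisto \cite{ms}, so the ``proof'' here is really an explanation of how to extract these statements from the cited literature, and perhaps a reduction of the admissible setting to the hypotheses under which those papers operate.

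\medskip

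\textbf{Proof proposal.} The plan is to reduce each of the three items to the corresponding result in \cite{mt} and \cite{ms}, after checking that an admissible measure $\mu$ satisfies the standing hypotheses of those papers. First I would recall that admissibility of $\mu$ means: (i) the semigroup (equivalently group) generated by $\mathrm{supp}(\mu)$ is non-elementary, so the random walk is non-elementary in the sense of \cite{mt}; and (ii) $\mathrm{supp}(\mu)$ has bounded image in $X$, i.e. there is a uniform bound $D$ with $d_X(x_0, g x_0) \le D$ for all $g \in \mathrm{supp}(\mu)$, which is exactly a bounded (in particular finite first moment, indeed bounded support) condition on the step distribution with respect to the orbit pseudo-metric pulled back from $X$. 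Since $X$ is separable and the action is by isometries, the hypotheses of the Maher--Tiozzo theory apply. The weakly asymmetric element in $\mathrm{supp}(\mu)$ plays no role for this proposition; it is only needed later.

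\medskip

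For item \myref{exp drift}, I would first invoke the general fact that $d_X(x_0, w_n x_0)$ is subadditive along the cocycle, so by Kingman's subadditive ergodic theorem the limit $L = \lim_n \tfrac1n d_X(x_0, w_n x_0)$ exists almost surely and is a.s.\ constant by ergodicity of the shift; positivity of $L$ for non-elementary $\mu$ with bounded support is \cite[Theorem 1.2]{mt}. The two-sided exponential deviation bound \eqref{eq:exp drift} is then precisely the exponential-decay strengthening proved by Mathieu--Sisto \cite{ms} (deviation inequalities for random walks) together with the upper-tail bound from \cite{mt}; the upper bound $d_X(x_0,w_nx_0)\le (1+\e)Ln$ is in fact easy from the bounded support hypothesis alone, since $d_X(x_0,w_nx_0)\le Dn$ deterministically and a Chernoff/Azuma argument on the increments (which are bounded) gives exponential concentration around the mean $Ln + o(n)$. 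The lower bound $d_X(x_0,w_nx_0)\ge (1-\e)Ln$ with exponential decay of the failure probability is the substantive input and is exactly the content of \cite[Theorem 1.3 / Section 3]{ms} (or the ``linear progress with exponential decay'' statement in \cite{mt}). For item \myref{exp gromov}, the decay of $\P(\gp{w_i x_0}{x_0}{w_n x_0}\ge r)\le Kc^r$ is the exponential decay of Gromov products along the walk; I would cite \cite[Section 5]{mt} or the corresponding statement in \cite{ms}, noting that by the Markov property it suffices to treat $i=0$ (comparing the two independent sub-walks $w_i^{-1}w_n$ run from $w_i x_0$ and the reversed walk to $x_0$), and then this is a consequence of the fact that the walk makes linear progress and tracks a geodesic ray to the boundary, so the probability that the forward and backward walks stay within Gromov product $r$ of $x_0$ decays like $c^r$ uniformly in $n$. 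For item \myref{exp shadows}, the shadow decay $\P(w_n \in S_{x_0}(g x_0, R)) \le K c^{d_X(x_0,gx_0)-R}$ is the statement that the hitting/transition measure of a shadow decays exponentially in the distance of the shadow from the basepoint; this is \cite[Theorem 1.4 / Proposition 5.x]{mt}. I would note that $R_0$ is chosen large enough (depending only on $\delta$) that shadows with parameter $\ge R_0$ behave like coarse half-spaces — this is where the geodesic and hyperbolicity hypotheses on $X$ enter — and then the bound follows by summing the Gromov-product estimate of item~\myref{exp gromov} over the relevant range, or directly from the cited result.

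\medskip

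\textbf{Main obstacle.} Since the three estimates are quoted from \cite{mt} and \cite{ms}, there is no genuinely new mathematical content; the only real work is bookkeeping to confirm that ``admissible'' implies the precise hypotheses used there (non-elementary, bounded support hence all moment conditions, separable target, geodesic hyperbolic space possibly non-proper) and that the constants $K, c, R_0$ can be taken with the claimed dependencies — in particular that $R_0$ depends only on the action of $G$ on $X$ and not on $\mu$, while $K$ and $c$ may depend on $\mu$ and $x_0$. The point requiring the most care is item \myref{exp shadows}: one must make sure the cited shadow-decay result is stated (or can be re-derived) with the exponent $d_X(x_0,gx_0)-R$ rather than, say, a shadow parameter centered differently, and that it holds for \emph{all} $g\in G$ uniformly, not merely for $g$ along the walk; the standard fix, if the literature statement is weaker, is to bound $\P(w_n\in S_{x_0}(gx_0,R))$ by the probability that the Gromov product $\gp{x_0}{gx_0}{w_n x_0}$ is at least $d_X(x_0,gx_0)-R$ and invoke a persistence-of-shadows argument together with item~\myref{exp gromov}.
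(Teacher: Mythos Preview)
Your proposal is correct and matches the paper's approach exactly: the paper does not prove this proposition at all but simply states it with the attribution ``as shown by Maher and Tiozzo \cite{mt} and Mathieu and Sisto \cite{ms}'', so your reduction of each item to the cited literature, together with the check that admissibility implies the hypotheses of those papers, is precisely what is intended (and in fact more detailed than what the paper provides).
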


\subsection{Matching} \label{section:match}

A \emph{match} for a pair of geodesics in $X$, is a subsegment of one
geodesic, which may be translated by an element of $G$ to fellow
travel with a subsegment of the other one. We now give a precise
definition.

We say that two geodesics $\gamma$ and $\gamma'$ in $X$ have an
\emph{$(A, B)$-match} if there are disjoint subgeodesics $\alpha
\subset \gamma$ and $\alpha' \subset \gamma'$ of length at least $A$,
and a group element $g \in G$, such that the Hausdorff distance
between $g \alpha$ and $\alpha'$ is at most $B$.  We may choose
$\gamma$ and $\gamma'$ to be the same geodesic, or overlapping
geodesics. If $\gamma$ and $\gamma'$ are the same geodesic, then we
will just say that $\gamma$ has an $(A, B)$-match. 

Given $w_n$, a random walk of length $n$ on $G$, we shall write
$\gamma_n$ for a geodesic in $X$ from $x_0$ to $w_n x_0$. As sample
paths converge to the Gromov boundary $\partial X$ almost surely
\cite{mt}, a bi-infinite sample path $\{ w_n x_0 \}_{n \in \Z}$
determines a bi-infinite geodesic in $X$ almost surely, which we shall
denote $\gamma_\omega$.

In an arbitrary non-locally compact Gromov hyperbolic space, pairs of
points in the boundary need not be connected by bi-infinite geodesics,
however, they are always connected by $(1, O(\delta))$-quasigeodesics.
With a slight abuse of language, we will call any bi-infinite
$(1,O(\delta))$-quasigeodesic connecting the limit points of $g$ an
\emph{axis} for the hyperbolic element $g$.

\begin{prop} 
\label{prop:match} %
\label{prop:match omega} %
\label{prop:match gamma_n} %
\label{prop:match axis} %
\label{prop:match fixed} %
\label{prop:match finite} %
Let $G$ be a countable group which acts acylindrically hyperbolically
on the separable space $X$, and let $\mu$ be an admissible probability
distribution on $G$. Then the there is a constant $K_0$, depending
only on $\delta$, such that for any $K \ge K_0$, the following
matching estimates hold.

\begin{enumerate}[{\ref{prop:match}.1}]

\item \label{item:match finite} %
There are constants $B$ and $c$, depending on $\mu$ and $K$, such that
for any geodesic segment $\eta$ and any constant $t$, the probability
that a translate of $\eta$ is contained in a $K$-neighbourhood of $[
\gamma_n(t), \gamma_n(t + \norm{\eta}) ]$ is at most $B c^{ \norm{ \eta
  } }$.

\item \label{item:match gamma_n} %
For any $\e > 0$ the probability that $\gamma_n$ has an $(\e
\norm{\gamma_n}, K)$-match tends to zero as $n$ tends to infinity.

\item \label{item:match omega} %
For any $\e > 0$, the probability that the $\gamma_n$ contains a
subsegment of length at least $(1 - \e) \norm{\gamma_n}$ which is
contained in a $K$-neighbourhood of $\gamma_\omega$ tends to one as
$n$ tends to infinity.  In particular, the probability that $\gamma_n$
and $\gamma_\omega$ have an $((1 - \e)\norm{\gamma_n}, K)$-match tends
to one as $n$ tends to infinity.

\item \label{item:match fixed} %
Let $g$ be a hyperbolic isometry with axis $\alpha_g$ which lies in
the support of $\mu$. Then for any constants $0 < \e < \tfrac{1}{3}$
and $L \ge 0$, the probability that $\gamma_n^-$ and $\alpha_g$ have
an $(L, K)$-match tends to one as $n$ tends to infinity, where
$\gamma_n^-$ is the subgeodesic of $\gamma_n$ obtained by removing $\e
\norm{\gamma_n}$-neighbourhoods of its endpoints.

\item \label{item:match axis} %
For any $\e > 0$, the probability that $w_n$ is hyperbolic with axis
$\alpha_n$, and $\gamma_n$ and $\alpha_n$ have a $( (1-\e)
\norm{\gamma_n}, K)$-match tends to one as $n$ tends to infinity.

\end{enumerate}

\end{prop}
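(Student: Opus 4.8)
The plan is to treat part~\ref{item:match finite} as the fundamental estimate from which the self-matching bound follows, and to obtain the three ``probability tends to one'' statements by combining it with linear progress (\ref{item:exp drift}), Gromov product decay (\ref{item:exp gromov}), the thin-triangles geometry of $X$, and ergodicity. For part~\ref{item:match finite} itself, I would reduce the existence of a matching translate $g\eta$ to a shadow event for the endpoint of the walk and then invoke \ref{item:exp shadows}. Suppose $g\eta$ lies in $N_K([\gamma_n(t),\gamma_n(t+\norm{\eta})])$. Using the strong Markov property at the first time the walk comes within $O(K)$ of the initial endpoint $\gamma_n(t)$, I can replace the walk by a fresh walk $v_m$ and a translate $h\eta$ whose initial endpoint is within $O(K+\delta)$ of $x_0$; the persistence of the match for length $\norm{\eta}$ then forces $v_m x_0$ to lie in the shadow $S_{x_0}(h\eta(\norm{\eta}),R)$ with $R=O(K+\delta)$, whose center is at distance $\approx\norm{\eta}$ from $x_0$. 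Estimate \ref{item:exp shadows} bounds the probability of landing in any one such shadow by $Kc^{\norm{\eta}-R}$, and acylindricity bounds the number of admissible translates $h$ (elements moving two points that are $\norm{\eta}$ apart by only $O(K+\delta)$), so a union bound over $h$ preserves the exponential rate $c^{\norm{\eta}}$, uniformly in $\eta$ and $t$. I expect this reduction --- reconciling the combinatorial ``translate of $\eta$'' condition with the probabilistic shadow event, and in particular controlling the union over the translating element $g\in G$ rather than over all isometries of $X$ --- to be the main obstacle; acylindricity is the tool that makes the union bound finite.

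Parts~\ref{item:match gamma_n} and~\ref{item:match omega} are then geometric consequences. For part~\ref{item:match gamma_n}, I would cover $\gamma_n$ by $O(n)$ subsegments of length $\e\norm{\gamma_n}$ at a net of starting points; any $(\e\norm{\gamma_n},K)$-self-match forces one such subsegment to contain a translate of another subsegment which becomes fixed once we condition on the earlier portion of the walk via the Markov property, so part~\ref{item:match finite} bounds each contribution by $Bc^{\e\norm{\gamma_n}}$. Since $\norm{\gamma_n}\ge(1-\e)Ln$ off an exponentially small set by \ref{item:exp drift}, the polynomial-in-$n$ union of these exponentially-small-in-$n$ terms tends to $0$. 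For part~\ref{item:match omega}, writing $\lambda_+$ for the forward limit of the bi-infinite walk, I would show that $\gp{x_0}{w_n x_0}{\lambda_+}$ is comparable to $\norm{\gamma_n}$: applying \ref{item:exp drift} to the three walks $w_n$, $w_m$ and $w_n^{-1}w_m$ gives $\gp{x_0}{w_n x_0}{w_m x_0}\approx Ln$ for $m>n$, so the forward ray of $\gamma_\omega$ and $\gamma_n$ fellow travel, up to $O(\delta)$, for a length $\ge(1-\e)\norm{\gamma_n}$ with probability tending to $1$; the $((1-\e)\norm{\gamma_n},K)$-match is then immediate with $g=\mathrm{id}$.

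Finally, parts~\ref{item:match fixed} and~\ref{item:match axis} combine part~\ref{item:match omega} with, respectively, ergodicity and Gromov product decay. For part~\ref{item:match fixed}, since $g$ lies in the support of $\mu$, steps by $g$ occur along the bi-infinite sample path with positive frequency, so by ergodicity $\gamma_\omega$ has, with probability tending to $1$, a match of length $\ge L$ with a translate of $\alpha_g$ inside the window corresponding to times in $[\e n,(1-\e)n]$; the constraint $\e<\tfrac13$ guarantees this window lands inside $\gamma_n^-$, and part~\ref{item:match omega} transfers the match from $\gamma_\omega$ to $\gamma_n^-$ at the cost of enlarging $K$ by $O(\delta)$. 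For part~\ref{item:match axis}, I would first note that $w_n$ is hyperbolic with translation length comparable to $\norm{\gamma_n}$ with probability tending to $1$, and then show that the distance from $x_0$ to the axis $\alpha_n$, governed by $\gp{x_0}{\lambda_+(w_n)}{\lambda_-(w_n)}$, is sublinear in $n$: the backward and forward limit points are controlled by the reflected and forward walks, whose Gromov product at $x_0$ decays by \ref{item:exp gromov}. Consequently $\gamma_n$ tracks $\alpha_n$ away from its endpoints, yielding the $((1-\e)\norm{\gamma_n},K)$-match. Controlling the position of $\alpha_n$ relative to $x_0$ here is the second delicate point, of a piece with the shadow/acylindricity reconciliation needed for part~\ref{item:match finite}.
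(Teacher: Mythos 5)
First, a point of comparison: the paper does not actually prove parts \myref{match finite}, \myref{match gamma_n}, \myref{match omega} or \myref{match axis} --- it cites Calegari--Maher for the first three and Dahmani--Horbez for the last --- and only writes out a proof of \myref{match fixed}. Your proof of \myref{match fixed} has the same architecture as the paper's (a positive-probability fellow-travelling event for $\gamma_\omega$, ergodicity applied to the middle window of times, then transfer to $\gamma_n$ via \myref{match omega}), but the positive-probability event you propose is too weak as stated: the occurrence of a $g$-increment at time $j$, even with positive frequency, does not force the bi-infinite geodesic $\gamma_\omega$ to fellow-travel $w_j\alpha_g$ for length $L$, since the walk may backtrack on either side of that increment. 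The paper instead fixes $m$ large so that any geodesic from $S_{x_0}(g^{-m}x_0,R_0)$ to $S_{x_0}(g^{m}x_0,R_0)$ must track $\alpha_g$ for length $L$, and invokes Maher--Tiozzo's positivity of the harmonic measure of shadows to see that the two-sided walk realizes this configuration with positive probability. That is a repairable but real omission.

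The more serious gap is in your argument for \myref{match finite}, which you rightly identify as the load-bearing estimate. Two problems. First, ``the first time the walk comes within $O(K)$ of $\gamma_n(t)$'' is not a stopping time: $\gamma_n$ is a geodesic to $w_nx_0$ and so depends on the whole walk, so the strong Markov property cannot be applied there; moreover, in a non-proper space the sample path need not pass within $O(K)$ of $\gamma_n(t)$ at all for a fixed $K$ (Proposition \myref{exp gromov} only gives closeness $r$ off an event of probability $Kc^r$). Any correct version must decompose over a deterministic time index and then control the resulting sum so as not to acquire a factor of $n$, which would destroy the bound $Bc^{\norm{\eta}}$ required to be uniform in $n$ and $t$. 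Second, your union bound is over the wrong set: acylindricity bounds the number of elements carrying $\eta$ to a \emph{given} coarse position (elements displacing the two endpoints of $\eta$, which are $\norm{\eta}$ apart, by $O(K)$), but what must be controlled is the number of candidate \emph{positions}, i.e.\ the family of shadows $S_{x_0}(h\eta(\norm{\eta}),R)$ as $h$ ranges over all elements sending $\eta(0)$ into a fixed ball; for a non-proper acylindrical action this family can be infinite, and each shadow separately having hitting probability $Kc^{\norm{\eta}}$ does not bound the probability of their union. This is precisely what makes the cited Lemma 5.26 of Calegari--Maher nontrivial, and your sketch does not resolve it. The remaining parts are sketched along the standard lines and are plausible modulo \myref{match finite}, with one caveat: in \myref{match omega} you should control $\gp{w_nx_0}{x_0}{w_mx_0}$ uniformly in $m$ via \myref{exp gromov}, rather than applying the drift estimate to $w_m$ with $m\gg n$, whose multiplicative error $\e Lm$ is not small compared to $n$.
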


Propositions \myref{match finite}, \myref{match gamma_n} and
\myref{match omega} are shown by Calegari and Maher
\cite{calegari-maher}. Proposition \myref{match finite} is not stated
explicitly, but follows directly from the proof of
\cite{calegari-maher}*{Lemma 5.26}.

Proposition \myref{match axis} is shown for $\mu$ with finite support
by Dahmani and Horbez \cite{dahmani-horbez}*{Proposition
  1.5}. However, they only need finite support to ensure linear
progress with exponential decay, and exponential decay for shadows,
and so their argument also works for $\mu$ with bounded support in
$X$.

Finally, a version of Proposition \myref{match fixed} is shown for the
mapping class group acting on Teichm\"uller space, for $\mu$ with
finite support, by Gadre and Maher \cite{gadre-maher}, and
independently by Baik, Gekhtman and Hamenst\"adt \cite{bgh}. A
significantly simpler version of these arguments works in the setting
of acylindrically hyperbolic groups, but we present the details below
for the convenience of the reader. As a side remark, we note that it can be shown that, in fact, the largest match of $\gamma_n$ and $\alpha_g$ has logarithmic size in $n$ \cite{sisto-taylor}.

\begin{proof}[Proof (of Proposition \myref{match fixed}).]
Let $g$ be a hyperbolic element which lies in the support of $\mu$,
and let $\alpha_g$ be an axis for $g$. Let $\gamma_\omega$ be the
bi-infinite geodesic determined by a bi-infinite random walk generated
by $\mu$. We shall write $\nu$ for the harmonic measure on $\partial
X$, and $\rnu$ for the reflected harmonic measure, i.e the harmonic
measure arising from the random walk generated by the probability
distribution $\rmu(g) = \mu(g^{-1})$.

By assumption, the group element $g$ lies in the support of $\mu$, and
so the group element $g^{-1}$ lies in the support of $\rmu$. Given a
constant $L > 0$, there is an $m$ sufficiently large such that any
geodesic from $S_{x_0}(g^m x_0, R_0)$ to $S_{x_0}(g^{-m} x_0, R_0)$
has a subsegment of length $L$ which $K = O(\delta)$-fellow travels
with $\alpha_g$. The following result of Maher and Tiozzo \cite{mt}
guarantees that the harmonic measures of these shadow sets are
positive.

\begin{prop} \cite{mt}*{Proposition 5.4} \label{prop:positive} %
Let $G$ be a countable group acting acylindrically hyperbolically on a
separable space $X$, and let $\mu$ be a non-elementary probability
distribution on $G$. Then there is a number $R_0$ such that for any
group element $g$ in the semigroup generated by the support of $\mu$,
the closure of the shadow $S_{x_0}( g x_0 , R_0 )$ has positive
hitting measure for the random walk determined by $\mu$.
\end{prop}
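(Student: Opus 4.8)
The plan is to manufacture, with positive probability, a single length-$L$ fellow-travelling of $\gamma_\omega$ with a $G$-translate of $\alpha_g$, to replicate it along $\gamma_\omega$ at a positive density of times by ergodicity of the shift, and then to transport the resulting match onto the middle segment $\gamma_n^-$ using the tracking estimate of Proposition \myref{match omega}. The key external input is the positivity of shadow measures, Proposition \ref{prop:positive}, applied both to $\mu$ and to $\rmu$.

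For the geometric input, recall that $g\in\mathrm{supp}\,\mu$ and $g^{-1}\in\mathrm{supp}\,\rmu$, so the powers $g^{m}$ and $g^{-m}$ lie in the respective semigroups. As already noted before Proposition \ref{prop:positive}, $m$ can be chosen large enough that any geodesic joining a point of $S_{x_0}(g^m x_0,R_0)$ to a point of $S_{x_0}(g^{-m}x_0,R_0)$ contains a subsegment of length at least $L$ which $K$-fellow-travels $\alpha_g$, with $K=O(\delta)$: these two shadows lie in opposite directions along $\alpha_g$ as seen from $x_0$, so the two endpoints have Gromov product $O(\delta)$ at $x_0$, and stability of quasigeodesics forces the connecting geodesic to track the central arc of $\alpha_g$ between $g^{-m}x_0$ and $g^m x_0$. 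Applying Proposition \ref{prop:positive} to $\mu$ and to $\rmu$, both shadows have positive harmonic measure; since the forward and backward endpoints $\gamma_\omega^\pm$ of $\gamma_\omega$ are independent with laws $\nu$ and $\rnu$, the event $E_0$ that $\gamma_\omega^+$ and $\gamma_\omega^-$ lie in these two shadows has probability $p_0>0$. On $E_0$ the geodesic $\gamma_\omega$ runs between the shadows and hence has a length-$L$ subsegment within $O(\delta)$ of $x_0$ that $K$-fellow-travels $\alpha_g$.

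Now the shift $T$ on the space of bi-infinite increment sequences is measure-preserving and ergodic, and satisfies $\gamma_{T^k\omega}=w_k^{-1}\gamma_\omega$. Thus $T^k\omega\in E_0$ means precisely that $\gamma_\omega$ has a length-$L$ subsegment near $w_k x_0$ which $K$-fellow-travels the translate $w_k\alpha_g$; since $w_k\alpha_g$ is a $G$-translate of $\alpha_g$, this is, by definition, an $(L,K)$-match of $\gamma_\omega$ with $\alpha_g$, realised by the group element $w_k^{-1}$. By Birkhoff's theorem the density of such $k$ converges almost surely to $p_0>0$, so with probability tending to one some such $k$ lies in the window $[\e n,(1-\e)n]$; the hypothesis $\e<\tfrac13$ keeps this window of definite positive length. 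Finally, by the linear-progress estimate of Proposition \myref{exp drift} together with Proposition \myref{match omega}, with probability tending to one $\gamma_n^-$ lies in a $K$-neighbourhood of the portion of $\gamma_\omega$ swept out at walk-times in $[\e n,(1-\e)n]$; as the matching segment has bounded length and sits near $w_k x_0$ for $k$ in this window, it is tracked by $\gamma_n^-$. Hence $\gamma_n^-$ itself $O(\delta)$-fellow-travels $w_k\alpha_g$ along a length-$L$ segment, giving the required $(L,K)$-match of $\gamma_n^-$ with $\alpha_g$, with probability tending to one.

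The main obstacle is this last transfer step: converting the single positive-probability match on $\gamma_\omega$ into one that lands inside the prescribed middle segment $\gamma_n^-$. This requires aligning the walk-time window $[\e n,(1-\e)n]$ produced by the ergodic theorem with the arclength window that defines $\gamma_n^-$, which is exactly where the shift-equivariance $\gamma_{T^k\omega}=w_k^{-1}\gamma_\omega$, the drift estimate \myref{exp drift}, and the tracking estimate \myref{match omega} must be combined.
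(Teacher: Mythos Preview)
Your proposal does not prove the stated Proposition~\ref{prop:positive} at all; you explicitly invoke it as a black-box input (``the key external input is the positivity of shadow measures, Proposition~\ref{prop:positive}''). What you have actually written is a proof of Proposition~\myref{match fixed}, the surrounding result inside whose proof Proposition~\ref{prop:positive} is quoted. In the paper, Proposition~\ref{prop:positive} is simply a citation of \cite{mt}*{Proposition~5.4} and is not proved here, so there is nothing in the present paper to compare a proof of it against.

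If the intended target was indeed Proposition~\myref{match fixed}, then your argument follows the paper's proof essentially step for step: choose $m$ so that geodesics between the opposite shadows $S_{x_0}(g^{m}x_0,R_0)$ and $S_{x_0}(g^{-m}x_0,R_0)$ must $O(\delta)$-fellow-travel $\alpha_g$ along a length-$L$ segment; apply Proposition~\ref{prop:positive} to $\mu$ and $\rmu$ to obtain positive probability $p$ for this event; use ergodicity of the shift to place such a match near $w_k x_0$ for a positive density of $k$ in the middle range; and finally transfer the match to $\gamma_n^-$ via Proposition~\myref{match omega}. Your write-up is somewhat more explicit than the paper about the shift-equivariance $\gamma_{T^k\omega}=w_k^{-1}\gamma_\omega$ and about aligning the walk-time window with the arclength window defining $\gamma_n^-$, but these are elaborations of the same scheme rather than a different route.
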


Therefore $\nu(S_{x_0}(g^m x_0, R_0)) > 0$ and $\rnu(S_{x_0}(g^{-m}
x_0, R_0)) > 0$, and so there is a positive probability $p$ say that
$\gamma_\omega$ has a subsegment of length at least $L$ which lies in
a $K$-neighbourhood of $\gamma_g$.  Ergodicity now implies that the
proportion of times in $\{ \lfloor \tfrac{n}{3} \rfloor, \ldots ,
\lfloor \tfrac{2n}{3} \rfloor \}$ for which $\gamma_\omega$ has a
subsegment of length at least $L$ which lies in a $K$-neighbourood of
$w_m \gamma_g$ tends to $p$ as $n$ tends to infinity, for almost all
sample paths $\omega$.  Proposition \myref{match omega} then implies
that the probability that $\gamma_n$ has an $(L, K)$-match with
$\gamma_g$ tends to one.
\end{proof}

\section{Schottky groups} \label{section:schottky}

In this section we collect together some standard results about free
subgroups of a group $G$ acting by isometries on a hyperbolic space
$X$, see for example Bridson and Haefliger \cite{bh} for a thorough
discussion.  For completeness, we present a mild generalization of an
argument due to Taylor and Tiozzo \cite{taylor-tiozzo}, and show that
one may rescale the Cayley graph $\Gamma$ of a Schottky group so that
the quasi-isometric embedding constants of $\Gamma$ into $X$ depend
only on $\delta$, the constant of hyperbolicity for $X$, and the size
of the Gromov products between the generators.

A relation $g = g_1 g_2 \ldots g_n$ between elements of $G$ may be
thought of as a recipe for assembling a path from $x_0$ to $g x_0$ as
a concatenation of translates of paths from $x_0$ to $g_i x_0$. The
following proposition gives an estimate for the distance of the
endpoints of the total path in terms of the lengths of the shorter
segments, and the Gromov products between adjacent segments.

Let $\eta$ be a path which is a concatenation of $k$ geodesic segments
$\{ \eta_i \}_{i=1}^k $, and label the endpoints of $\eta_i$ as
$x_{i-1}$ and $x_{i}$, such that the common endpoint of $\eta_i$ and
$\eta_{i+1}$ is labelled $x_{i}$. For $2 \le i \le k$, let $p_i$ be
the nearest point projection of $x_{i-2}$ to $\eta_i$, and for $1 \le
i \le k-1$, let $q_i$ be the nearest point projection of $x_{i+1}$ to
$\eta_i$. We define $p_1 = x_0$ and $q_k = x_{k+1}$.  We will call the
subsegment $[p_i, q_i] \subset \eta_i$ the \emph{persistent
  subgeodesic} of $\eta_i$. This is illustrated below in Figure
\ref{pic:concat}.

\begin{figure}[H]
\begin{center}
\begin{tikzpicture}

\tikzstyle{point}=[circle, draw, fill=black, inner sep=1pt]

\draw (0, 0) node [point, label=below:$x_{i-1}$] {} -- 
             node [midway, below] {$\eta_i$}
      (4, 0) node [point, label=below:$x_{i}$] {};

\draw (4, 0) --
      (3, 0.5) node [point, label=right:$p_{i+1}$] {} --
      (3, 2) node [point, label=right:$x_{i+1}$] {}
             node [midway, right] {$\eta_{i+1}$};

\draw (0, 0) --
      (1, -0.5) node [point, label=right:$q_{i-1}$] {} --
      (1, -2) node [point, label=right:$x_{i-2}$] {}
              node [midway, left] {$\eta_{i-1}$};

\draw (1, 0) node [point, label=above:$p_i$] {};
\draw (3, 0) node [point, label=below:$q_i$] {};

\end{tikzpicture}
\end{center}
\caption{A concatenation of geodesic segments.}
\label{pic:concat}
\end{figure}
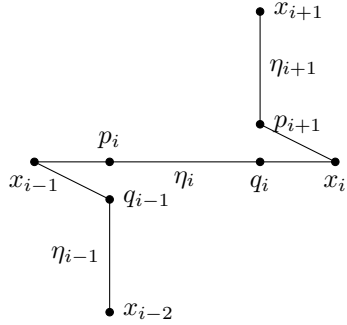

The length of the persistent subgeodesic may be estimated in terms of
Gromov products.

\begin{prop} \label{prop:persistent}
There is a constant $C$, which only depends on $\delta$ such that if
$\eta$ is a concatenation of geodesic segments $\eta_i$, with
persistent subgeodesics $[p_i, q_i]$, then
\[ d_X(p_i, q_i) \le d_X(x_{i-1}, x_{i}) - \gp{x_{i-1}}{x_{i-2}}{x_{i}} -
\gp{ x_{i} }{ x_{i-1} }{ x_{i+1} } + C, \]
and
\begin{equation} \label{eq:concat gp}
d_X(p_i, q_i) \ge d_X(x_{i-1}, x_{i}) - \gp{x_{i-1}}{x_{i-2}}{x_{i}} -
\gp{ x_{i} }{ x_{i-1} }{ x_{i+1} } - C. 
\end{equation}
\end{prop}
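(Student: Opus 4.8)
The plan is to estimate the length of the persistent subgeodesic $[p_i, q_i]$ by controlling where the projection points $p_i$ and $q_i$ sit on $\eta_i$. By definition, $p_i$ is the nearest point projection of $x_{i-2}$ onto $\eta_i$, and $q_i$ is the nearest point projection of $x_{i+1}$ onto $\eta_i$. The key observation is that in a $\delta$-hyperbolic space, the distance from an endpoint of a geodesic to the nearest point projection of an external point is controlled, up to an additive $O(\delta)$ error, by a Gromov product. Concretely, I would first establish that $d_X(x_{i-1}, p_i) = \gp{x_{i-1}}{x_{i-2}}{x_i} + O(\delta)$ and similarly $d_X(x_i, q_i) = \gp{x_i}{x_{i-1}}{x_{i+1}} + O(\delta)$.

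To prove these two identities, I would use the standard fact that for three points $w, x, y$, the nearest point projection of $x$ onto a geodesic $[w, y]$ lies within $O(\delta)$ of the point at distance $\gp{w}{x}{y}$ from $w$ along $[w,y]$. Applying this to the triangle with vertices $x_{i-1}, x_{i-2}, x_i$ (with $\eta_i = [x_{i-1}, x_i]$ the relevant side) gives the location of $p_i$ measured from the endpoint $x_{i-1}$; applying the symmetric statement to the triangle $x_i, x_{i+1}, x_{i-1}$ gives the location of $q_i$ measured from $x_i$. The boundary conventions $p_1 = x_0$ and $q_k = x_{k+1}$ are consistent with these formulas once one interprets the degenerate Gromov products as vanishing, so the interior and boundary cases can be handled uniformly.

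Once both projection distances are pinned down, the length of the persistent subgeodesic follows by subtraction: since $p_i$ and $q_i$ both lie on the geodesic segment $\eta_i = [x_{i-1}, x_i]$, with $p_i$ near the $x_{i-1}$ end and $q_i$ near the $x_i$ end, we have
\[ d_X(p_i, q_i) = d_X(x_{i-1}, x_i) - d_X(x_{i-1}, p_i) - d_X(x_i, q_i), \]
and substituting the two estimates yields both the upper and lower bounds with a combined additive constant $C = O(\delta)$. One must verify that $p_i$ indeed precedes $q_i$ along $\eta_i$ (so that this subtraction is valid rather than an overlap); this holds once $\eta_i$ is long relative to the Gromov products, and in the degenerate case where it fails the asserted inequalities still hold after absorbing the discrepancy into $C$.

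The main obstacle will be handling the additive error terms carefully and uniformly across the interior segments and the two boundary segments ($i=1$ and $i=k$), where the conventions $p_1 = x_0 = x_{i-1}$ and $q_k = x_{k+1} = x_i$ force one of the two Gromov-product terms to be absent. The geometric heart of the argument---that a nearest point projection realizes a Gromov product up to $O(\delta)$---is a routine consequence of $\delta$-slimness, so the real care lies in the bookkeeping of constants and in confirming that the subtraction giving $d_X(p_i,q_i)$ remains valid (equivalently, that the persistent subgeodesic is nonempty or that its nonexistence is harmlessly absorbed into $C$).
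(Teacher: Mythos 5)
The paper omits the proof of this proposition entirely, describing it only as ``a straight forward application of thin triangles and the definition of the Gromov product,'' and your argument is exactly that intended route: you locate $p_i$ and $q_i$ via the standard fact that the nearest point projection of $x_{i-2}$ (resp.\ $x_{i+1}$) onto $\eta_i$ lies within $O(\delta)$ of the point at distance $\gp{x_{i-1}}{x_{i-2}}{x_i}$ from $x_{i-1}$ (resp.\ $\gp{x_i}{x_{i-1}}{x_{i+1}}$ from $x_i$), and then subtract. This is correct, and your caveat about the degenerate case where $p_i$ and $q_i$ cross reflects the same implicit looseness the paper tolerates; it is harmless in context because only the lower bound \eqref{eq:concat gp} is invoked downstream, and always under hypotheses that rule out the degeneracy.
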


We omit the proof of Proposition \ref{prop:persistent}, which is a
straight forward application of thin triangles and the definition of
the Gromov product.

We now show that if each persistent subsegment is sufficiently long,
then the distance between $x_0$ and $x_k$ is equal to the sum of the
lengths of the persistent subsegments, up to an additive error
proportional to the number of geodesic segments.

\begin{prop} \label{prop:concat}
There exists a constant $C > 0$, which depends only on $\delta$, such
that if $\eta$ is a concatenation of geodesic segments $\eta_i$ for $1
\le i \le k$, with persistent subgeodesics $[p_i, q_i]$ with
\begin{equation} \label{eq:concat} 
d_X(p_i, q_i) \ge C, 
\end{equation}
for all $1 \le i \le k$, then 
\begin{equation} \label{eq:concat result} 
\sum_{i=1}^{k} d_X(p_i, q_i) - 2 C k \le d_X(x_0, x_k) \le
\sum_{i=1}^{k} d_X(p_i, q_i) + 2 C k. 
\end{equation}
Furthermore, any geodesic from $x_0$ to $x_k$ passes within distance
$C$ of both $p_i$ and $q_i$.
\end{prop}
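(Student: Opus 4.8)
The plan is to prove Proposition~\ref{prop:concat} by induction on $k$, leveraging Proposition~\ref{prop:persistent} for the two-segment estimate and then tracking how the endpoint of a long concatenation interacts with the next persistent subsegment. The key geometric input is that when a persistent subgeodesic $[p_i,q_i]$ is long (say, of length at least the constant $C$ from Proposition~\ref{prop:persistent}, enlarged if needed), the concatenation cannot ``backtrack'' through it: a geodesic from $x_0$ to $x_k$ must stay close to the whole chain of persistent subsegments.

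First I would set up the inductive hypothesis carefully. Let $\eta^{(j)}$ be the sub-concatenation $\eta_1\cdots\eta_j$, with endpoint $x_j$. The hypothesis should assert not just the length estimate \eqref{eq:concat result} for $d_X(x_0,x_j)$, but also the geometric statement that any geodesic $[x_0,x_j]$ passes within $C$ of each $p_i,q_i$ for $i\le j$, \emph{and} in particular that the nearest point projection of $x_0$ to $\eta_j$ lies within $O(\delta)$ of $p_j$ (this is essentially built into the definition of $p_j$, which uses $x_{j-2}$, so one needs a lemma that for a long enough persistent subgeodesic the projection of $x_0$ and the projection of $x_{j-2}$ onto $\eta_j$ coincide up to $O(\delta)$). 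For the inductive step, I attach $\eta_{j+1}$ at $x_j$; the crucial point is that the Gromov product $\gp{x_j}{x_0}{x_{j+1}}$ is within $O(\delta)$ of $\gp{x_j}{x_{j-1}}{x_{j+1}}$, which lets me replace $x_0$ by $x_{j-1}$ in the relevant estimates and fall back on Proposition~\ref{prop:persistent}. Thin-triangle reasoning applied to the triangle with vertices $x_0,x_j,x_{j+1}$, together with the fact that $[x_0,x_j]$ passes within $C$ of $q_j$ and that $d_X(p_j,q_j)\ge C$ is large, gives that $[x_0,x_{j+1}]$ passes within $C$ of $q_j$ as well, and then passes within $C$ of $p_{j+1},q_{j+1}$ by applying Proposition~\ref{prop:persistent}'s reasoning to the two-segment concatenation $[x_0,x_j]\cup\eta_{j+1}$ near the junction.

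Concretely, from Proposition~\ref{prop:persistent} applied to the concatenation $[x_{j-1},x_j]\cup\eta_{j+1}$ I get $d_X(x_0,x_{j+1})$ is, up to $O(\delta)$, equal to $d_X(x_0,x_j) + d_X(x_j,x_{j+1}) - 2\gp{x_j}{x_{j-1}}{x_{j+1}}$; combined with the inductive estimate for $d_X(x_0,x_j)$ and the defining relation $d_X(p_j,q_j)\approx d_X(x_{j-1},x_j)-\gp{x_{j-1}}{x_{j-2}}{x_j}-\gp{x_j}{x_{j-1}}{x_{j+1}}$ this telescopes to $\sum_{i=1}^{j+1}d_X(p_i,q_i)$ up to an error of the form $2C(j+1)$ once $C$ is chosen large enough relative to the implied $O(\delta)$ constants. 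The ``furthermore'' clause then follows because at each stage the geodesic $[x_0,x_{j+1}]$ is forced near $q_j$, hence near $p_{j+1}$ and $q_{j+1}$, using that the persistent subsegments are longer than the fellow-traveling error coming from thinness.

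The main obstacle I anticipate is the bookkeeping of constants: ensuring that a single $C$, depending only on $\delta$, simultaneously serves as the lower bound in \eqref{eq:concat} guaranteeing the ``no backtracking'' behaviour, as the additive slack $2Ck$ in \eqref{eq:concat result}, and as the $O(\delta)$ fudge factor in all the thin-triangle and projection estimates. The natural way to handle this is to first fix an auxiliary constant $C_0=O(\delta)$ large enough to absorb all the thin-triangle estimates and the ``projection stability'' lemma, prove the inductive step with errors bounded by $C_0$ per segment, and only then set $C=2C_0$ (or similar) so that the hypothesis $d_X(p_i,q_i)\ge C$ comfortably dominates the per-step error and the conclusion reads cleanly. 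A secondary subtlety is the base case and the edge conventions $p_1=x_0$, $q_k=x_{k+1}$: I would check $k=1$ and $k=2$ directly, the latter being precisely Proposition~\ref{prop:persistent}, so the induction really starts in earnest at $k=3$.
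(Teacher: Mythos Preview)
Your inductive approach is correct and will work, but it is organized differently from the paper's argument. The paper does not induct on the full statement. Instead it isolates a single inductive fact---that the nearest point projection of $x_0$ onto $\eta_i$ lies within $O(\delta)$ of $p_i$ (and symmetrically for $x_k$ and $q_i$)---and proves just that by a short cascading argument: if the projection of $x_{i-3}$ onto $\eta_{i-1}$ lands near $p_{i-1}$, then the projection onto $\eta_i$ lands near $p_i$. Once projection stability is in hand, the paper derives everything else in a single global step: since $x_0$ and $x_k$ project onto $\eta_i$ at points at least $C$ apart (namely near $p_i$ and $q_i$), any geodesic $[x_0,x_k]$ must pass close to both, and so contains disjoint subsegments of length roughly $d_X(p_i,q_i)$ near each persistent subgeodesic, giving the lower bound immediately. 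The upper bound is just the triangle inequality together with $d_X(q_i,p_{i+1})=O(\delta)$.

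The advantage of the paper's organization is that the induction carries only one $O(\delta)$ constant (for projection stability), so there is no accumulation of error to track; the $2Ck$ slack appears only once, at the end, from summing $k$ bounded gaps. Your approach bundles the distance estimate and the ``furthermore'' clause into the inductive hypothesis and therefore has to manage telescoping and per-step error bounds more carefully---exactly the bookkeeping you flag as the main obstacle. Both routes are standard; yours is perhaps more natural to discover, while the paper's is a bit cleaner to write down. One small correction: the identity $d_X(x_0,x_{j+1}) = d_X(x_0,x_j) + d_X(x_j,x_{j+1}) - 2\gp{x_j}{x_0}{x_{j+1}}$ that you invoke is just the definition of the Gromov product, not a consequence of Proposition~\ref{prop:persistent}.
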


\begin{proof}
For any three points $x, y$ and
$z$, determining a triangle in in $X$, there is a point $m$, known as
the \emph{center} of triangle, such that $m$ is distance at most
$\delta$ from each of the three sides of the triangle. Furthermore,
there is a $C_1$, which only depends on $\delta$, such that if $p$ is
a closest point on $[y, z]$ to $x$, then $d_X(p, m) \le C_1$. This
implies that $d_X(q_i, p_{i+1}) \le 2 C_1$, using the triangle with
vertices $x_{i-1}, x_{i}$ and $x_{i+1}$. The upper bound
\[ d_X(x_0, x_k) \le d_X(q_{i}, p_{i+1}) + 2 C_1 k \]
then follows from the triangle inequality.

There are constants $C_2$ and $C_3$, which only depend on $\delta$,
such that for any point $y$ in $X$, whose nearest point projection to
$\eta_{i-1}$ is distance at least $C_2$ away from $\eta_i$, the
nearest point projection of $y$ to $\eta_i$ is distance at most $C_3$
from $p_i$. As this also holds for $\eta_{i-1}$, the nearest point
projection of $x_{i-3}$ to $\eta_{i-1}$ is within distance $C_3$ of
$p_{i-1}$, and so the nearest point projection of $x_{i-3}$ to
$\eta_i$ is within distance $C_3$ of $p_i$. By induction, the nearest
point projection of $x_0$ to $\eta_i$ is within distance $C_3$ of
$p_i$, and similarly, the nearest point projection of $x_k$ to
$\eta_i$ is within distance $C_3$ of $q_i$.

There are constants $C_4$ and $C_5$, depending only on $\delta$, such
that if two points $x$ and $y$ in $X$ have nearest point projections
$p$ and $q$ onto a geodesic $\alpha$, and $d_X(p, q) \ge C_4$, then
any geodesic from $x$ to $y$ passes within distance $C_5$ of both $p$
and $q$.

In particular, there exists $C_6$ depending only on $\delta$ so that
if $\gamma$ is a geodesic from $x_0$ to $x_k$, then for each $i$ there
is a subsegment of $\gamma$ of length at least $d_X(p_i, q_i) - C_6$
which is contained in a $C_5 + 4\delta$-neighbourhood of the
persistent subsegment $[p_i, q_i]$, and is disjoint from $C_5 +
4\delta$-neighbourhoods of the other persistent subsegments $[p_j,
q_j]$ for $i \not = j$. Therefore
\[ d_X( x_0, x_k) \ge \sum_{i=1}^k \left( d_X(p_i, q_i) - C_6 \right), \]
giving the required lower bound.
\end{proof}

We now use Proposition \ref{prop:concat} to give a lower bound on the
translation length of group elements.

\begin{prop}\label{prop:concat_hyperbolic}
There exists a constant $C > 0$, which depends only on $\delta$, such
that if $g$ is an isometry of a hyperbolic space $X$ with basepoint
$x_0$, which is a product of isometries $g = g_1 g_2 \ldots g_n$,
where the $g_i$ satisfy the following collection of inequalities
\begin{equation}\label{eq:concat_hyperbolic} 
d_X(x_0, g_i x_0) \ge \gp{x_0}{g_{i-1}^{-1} x_0}{g_{i} x_0}
+\gp{x_0}{g_{i}^{-1} x_0}{g_{i+1} x_0}+ C,  
\end{equation}
where $g_{n+1} = g_1$ and $g_0=g_n$, then the translation length of
$g$ is at least
\begin{equation}\label{eq:concat_tau} 
\tau(g) \ge \sum_{i=1}^n \left( d_X(x_0, g_i x_0) -
\gp{x_0}{g_{i-1}^{-1} x_0}{g_{i} x_0} -\gp{x_0}{g_{i}^{-1}
  x_0}{g_{i+1} x_0} - C \right), 
\end{equation}
and furthermore, any geodesic from $x_0$ to $g x_0$, and any axis
$\gamma$ for $g$ passes within distance $\gp{x_0}{g_{i-1}^{-1}
  x_0}{g_{i} x_0} +\gp{x_0}{g_{i}^{-1} x_0}{g_{i+1} x_0} + C$ of each
$g_1 \ldots g_i x_0$.
\end{prop}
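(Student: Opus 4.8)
The plan is to reduce Proposition~\ref{prop:concat_hyperbolic} to Proposition~\ref{prop:concat} by unwrapping the cyclic word $g = g_1 \cdots g_n$ into a long concatenation of translated geodesic segments along an orbit, and then reading off a lower bound for $\tau(g)$ from the length estimate there. First I would set $x_i = g_1 g_2 \cdots g_i x_0$ for $0 \le i \le n$, so that $[x_{i-1}, x_i]$ is a translate (by $g_1\cdots g_{i-1}$) of a geodesic from $x_0$ to $g_i x_0$, and $x_n = g x_0$. More generally, iterating $g$ gives a bi-infinite concatenation whose $(jn+i)$-th vertex is $g^j g_1\cdots g_i x_0$. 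The Gromov products that appear in Proposition~\ref{prop:persistent} at the vertex $x_i$, namely $\gp{x_i}{x_{i-1}}{x_{i+1}}$, are invariant under the isometry $g_1\cdots g_i$, hence equal to $\gp{x_0}{g_i^{-1} x_0}{g_{i+1} x_0}$; similarly $\gp{x_{i-1}}{x_{i-2}}{x_i} = \gp{x_0}{g_{i-1}^{-1} x_0}{g_i x_0}$. The cyclic conventions $g_{n+1}=g_1$, $g_0 = g_n$ are exactly what make these identities hold at the "seam" vertices $x_0$ and $x_n$ of one period, so that the whole bi-infinite concatenation is genuinely $g$-periodic.

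Next I would invoke Proposition~\ref{prop:concat}: taking $C$ to be the constant there (possibly enlarged), the hypothesis \eqref{eq:concat_hyperbolic} together with the lower estimate \eqref{eq:concat gp} of Proposition~\ref{prop:persistent} guarantees that each persistent subgeodesic $[p_i, q_i]$ has length at least $C$, so \eqref{eq:concat} is satisfied. Applying \eqref{eq:concat result} to the concatenation running from $x_0$ to $x_{Nn}$ (i.e.\ $N$ periods) gives
\[
d_X(x_0, g^N x_0) \ge N \sum_{i=1}^n d_X(p_i, q_i) - 2C N n,
\]
and combining with the upper bound for $d_X(p_i,q_i)$ from Proposition~\ref{prop:persistent} yields $d_X(x_0, g^N x_0) \ge N \big( \sum_i d_X(x_0, g_i x_0) - \gp{x_0}{g_{i-1}^{-1} x_0}{g_i x_0} - \gp{x_0}{g_i^{-1} x_0}{g_{i+1} x_0} - O(C)\big)$ after absorbing constants. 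Dividing by $N$ and letting $N \to \infty$, the stable translation length $\tau(g) = \lim_N \tfrac1N d_X(x_0, g^N x_0)$ is bounded below by \eqref{eq:concat_tau} (re-choosing $C$ one last time to swallow the $O(C)$). The "furthermore" clause follows from the last sentence of Proposition~\ref{prop:concat}, applied with the roles of $x_0$ and $x_k$ played by $x_0$ and $x_n = gx_0$: any geodesic $[x_0, gx_0]$ passes within $C$ of each $p_i$ and $q_i$, hence within the claimed distance of $x_i = g_1\cdots g_i x_0$; and since an axis for $g$ fellow-travels $[x_0, gx_0]$ (up to $O(\delta)$, by periodicity and hyperbolicity), the same holds for the axis after enlarging $C$.

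The main obstacle I anticipate is the bookkeeping at the periodic seams: one must check that applying Proposition~\ref{prop:concat} to a truncated finite sub-concatenation does not lose control at the two ends, i.e.\ that the persistent-subgeodesic lengths at $i=1$ and $i=n$ really are governed by the cyclic Gromov products rather than by the (missing) neighbors $g_0 = g_n$ and $g_{n+1}=g_1$ — this is precisely where the $g$-periodicity of the bi-infinite concatenation, and the resulting translation-invariance of all the relevant Gromov products, does the work, and why it is cleanest to run $N$ full periods and take $N \to \infty$ rather than to argue with a single copy. A secondary, routine point is passing from $d_X(x_0, g^N x_0)/N$ to $\tau(g)$, which is standard since translation length equals the stable norm.
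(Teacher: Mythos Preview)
Your proposal is correct and follows essentially the same route as the paper: define the $g$-periodic bi-infinite concatenation of segments $[x_{i-1},x_i]$, use Proposition~\ref{prop:persistent} to verify the persistent-length hypothesis of Proposition~\ref{prop:concat}, apply the latter to $N$ full periods, and divide by $N$ to extract $\tau(g)$, with the ``furthermore'' clause coming from the last sentence of Proposition~\ref{prop:concat}. One slip: where you write ``upper bound for $d_X(p_i,q_i)$'' you mean the \emph{lower} bound \eqref{eq:concat gp} --- but the inequality you then write down is the correct one.
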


\begin{proof}
We first define a sequence of points $\{ x_i \}_{i=0}^{n}$, and a
sequence of geodesic segments $\{ \eta_i \}_{i=1}^n$, following the
index conventions of Proposition \ref{prop:persistent}.  Let $x_0$ be
the basepoint of $X$, and for $1 \le i \le n$ let $x_i = g_1 \ldots
g_{i} x_0$.  For $1 \le i \le n$ let $\eta_i$ be a geodesic from
$x_{i-1}$ to $x_{i}$, and let $\eta$ be the path formed from the
concatenation of the geodesic segments $\eta_i$.

We may now consider the bi-infinite sequences obtained from all
$g$-translates of the points $x_i$ and the geodesics $\eta_i$,
labelled such that $x_{jn+i} = g^j x_i$ and $\eta_{jn+i} = g^j
\eta_i$, for $j \in \Z$ and $1 \le i \le n$. The terminal point
$x_{n}$ of $\eta_n$ is equal to $g_1 \ldots g_n x_0 = g x_0$, which is
the same as the initial point of $\eta_{n+1} = g \eta_1 = g x_0$, so
the concatenation of the geodesics $\eta_i$ is a bi-infinite
$g$-equivariant path in $X$, which we shall denote $\eta$.

If we choose $C \ge 3 C_1$, where $C_1$ is the constant from
Proposition \ref{prop:concat} then the assumption
\eqref{eq:concat_hyperbolic}, together with the estimate for
persistent length in terms of Gromov products \eqref{eq:concat gp},
implies that any subpath $\{ \eta_i \}_{i=a}^b$ of $\eta$ satisfies
the hypothesis of Proposition \ref{prop:concat}, and so the conclusion
of Proposition \ref{prop:concat} implies that $d_X(x_{a-1}, x_b) \ge
C_1(b-a)$. In particular, this implies that the translation length
$\tau(g)$, which by definiton is equal to
\[ \tau(g) = \lim_{m \to \infty} \tfrac{1}{m} d_X( x_0, g^m x_0 ), \]
is given by
\[ \tau(g) = \lim_{m \to \infty} \tfrac{1}{m} d_X(x_0, x_{mn} ) \ge
C_1 n > 0. \]
Therefore the translation length $\tau(g)$ is positive, and so $g$ is
hyperbolic, and $\eta$ is a quasi-axis for $g$.  The estimate for
translation length \eqref{eq:concat_tau} then follows by combining
\eqref{eq:concat gp} and \eqref{eq:concat result}, and the statements
about the distance from $x_i$ to any geodesic $\gamma$ from $x_{a}$ to
$x_b$, for $a \le i \le b$, and the distance from $x_i$ to any axis
for $g$ follow from thin triangles and the definition of the Gromov
product, for $C = 3 C_1 + O(\delta)$.
\end{proof}

We now give conditions on the generators of a subgroup which ensure
that the generators freely generate a subgroup which is
quasi-isometrically embedded in $X$.

Given a symmetric generating set $A = \{ a_1, a_1^{-1}, \ldots, a_k,
a_k^{-1} \}$ generating a subgroup $H$ of $G$, let $F_{\overline A}$
be the free group generated by ${\overline A}=\{a_1,\ldots,a_k\}$, and
let $\Gamma_H$ be a rescaled copy of the Cayley graph for
$F_{\overline A}$, with respect to the generating set ${\overline A}$,
where an edge in $\Gamma_H$ corresponding to a generator $a_i$ has
length equal to $d_X(x_0, a_i x_0)$. We shall refer to $\Gamma_H$ as
the \emph{rescaled Cayley graph} for $F_{\overline A}$, which is
quasi-isometric to the standard unscaled Cayley graph in which every
edge has length one. The map from $\Gamma_H$ to $X$ which sends a
vertex $h$ to $h x_0$, and an edge from $h$ to $h'$ to a geodesic from
$h x_0$ to $h' x_0$ is continuous, can be made $H$-equivariant, and is
an isometric embedding on each edge. The conditions we give below will
in fact show that $\Gamma_H$ is quasi-isometrically embedded in $X$,
with quasi-isometry constants independent of the lengths of the edges.

\begin{prop} \label{prop:schottky} %
There is a constant $K_0$, which only depends on $\delta$, such that
for any $K \ge K_0$, if $H$ is a subgroup generated by the symmetric
generating set $A = \{ a_1, a_1^{-1}, \ldots a_k, a_k^{-1} \}$,
satisfying the following conditions,
\begin{equation} \label{eq:gp conditions}
\left.
\begin{array}{c}
d_X(x_0, a x_0) \ge 6 K \text{ for all } a \in A \\
\gp{x_0}{a x_0}{b x_0} \le K \text{ for all } a \not = b \text{ in } A
\end{array}
\right\}
\end{equation}
then $H$ is isomorphic to the free group $F_k$, freely generated by
the generating set ${\overline A}=\{a_1,\ldots,a_k\}$, and
furthermore, the subgroup $H$ is quasi-isometrically embedded in $X$,
and the rescaled Cayley graph $\Gamma_H$ is $(6, O(\delta,
K))$-quasi-isometrically embedded in $X$.
\end{prop}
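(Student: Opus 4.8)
The plan is to view each reduced word $w = b_1 b_2 \cdots b_m$ in the generators $A$ as an instance of the concatenation setup of Proposition~\ref{prop:concat_hyperbolic} (or rather its non-cyclic analogue, Proposition~\ref{prop:concat}), with $g_i = b_i$ and $x_i = b_1 \cdots b_i\, x_0$. The two hypotheses \eqref{eq:gp conditions} are tailored to make the persistent-subgeodesic hypothesis \eqref{eq:concat} of Proposition~\ref{prop:concat} hold: since $b_i b_{i+1} \neq 1$ (the word is reduced), the Gromov product $\gp{x_i}{x_{i-1}}{x_{i+1}} = \gp{x_0}{b_i^{-1} x_0}{b_{i+1} x_0}$ is bounded by $K$, and similarly at the other end, so by Proposition~\ref{prop:persistent} the persistent length $d_X(p_i, q_i) \geq d_X(x_{i-1}, x_i) - 2K - C \geq 6K - 2K - C \geq C$ once $K_0$ is chosen so that $6K_0 - 2K_0 - C \geq C$, i.e. $K_0 \geq C/2$ (absorbing the constant $C$ from Proposition~\ref{prop:persistent} and Proposition~\ref{prop:concat} into a single $O(\delta)$). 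Note the persistent length is also $\geq 4K - C \geq 2K > 0$, which is the quantitative lower bound we will actually use.

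First I would deduce from Proposition~\ref{prop:concat} applied to $w$ that
\[ d_X(x_0, w x_0) \;\geq\; \sum_{i=1}^m \big( d_X(x_0, b_i x_0) - 2K - C\big) \;\geq\; 2Km \;-\; O(\delta) m \;>\; 0 \]
for any nonempty reduced word, which already shows $H$ is free on $\overline A$ (no nonempty reduced word represents the identity) — this is the first assertion. Moreover the same inequality, combined with the obvious upper bound $d_X(x_0, w x_0) \leq \sum_i d_X(x_0, b_i x_0) = \|\text{path}\|$, shows that the image path from $x_0$ to $w x_0$ has length comparable to $d_X(x_0, w x_0)$ up to a multiplicative constant depending only on $\delta$ and $K$: indeed $d_X(x_0, wx_0) \geq \sum_i d_X(x_0, b_i x_0) - (2K+C)m \geq (1 - \tfrac{2K+C}{6K})\sum_i d_X(x_0, b_i x_0) \geq \tfrac{1}{6}\sum_i d_X(x_0, b_i x_0)$ once $K_0$ is large enough (this is where the ``$6K$'' in the hypothesis is used). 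Reading $\sum_i d_X(x_0, b_i x_0)$ as the distance in the rescaled Cayley graph $\Gamma_H$ from $1$ to $w$ (the $\Gamma_H$-geodesic to a vertex $w$ is the unique reduced word, since $\Gamma_H$ is a tree), this says $\tfrac{1}{6} d_{\Gamma_H}(1, w) \leq d_X(x_0, w x_0) \leq d_{\Gamma_H}(1, w)$. Translating by $H$-equivariance to an arbitrary pair of vertices $h, h'$ with $w = h^{-1} h'$ gives the two-sided bound, so $\Gamma_H \to X$, $h \mapsto h x_0$, is a $(6, 0)$-quasi-isometric embedding on vertices, hence (interpolating along edges, each mapped isometrically) a $(6, O(\delta))$-quasi-isometric embedding of the whole graph $\Gamma_H$; since $\Gamma_H$ is quasi-isometric to the unscaled Cayley graph of $H$ with word metric $d_H$, this also gives that $H$ itself is quasi-isometrically embedded in $X$ in the sense defined in Section~\ref{sec:background}. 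That is the second assertion.

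The routine points I would gloss over: the precise value of $K_0$ (just ``$K_0 = O(\delta)$ large enough that $6K - 2K - C \geq C$ and $6K - 2K - C \geq \tfrac{2}{3}\cdot 6K$ fail to be an obstruction''), the standard fact that a $(\lambda, c)$-quasi-isometric embedding defined on the vertex set of a graph with all edges of length mapped isometrically extends to the whole graph with controlled constants, and the equivalence of $\Gamma_H$ with the unscaled Cayley graph. The one genuinely delicate step is verifying that the hypotheses of Proposition~\ref{prop:concat} really are met \emph{uniformly along every subpath} — Proposition~\ref{prop:concat} as stated concerns a single concatenation, and we need \eqref{eq:concat} for the chosen word $w$ itself, which is immediate from \eqref{eq:gp conditions} and reducedness, so in fact there is no subpath subtlety here (unlike in Proposition~\ref{prop:concat_hyperbolic}, where the bi-infinite periodic extension forced the ``every subpath'' phrasing). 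Thus the main obstacle is really just bookkeeping the constants so that a single choice of $K_0$, depending only on $\delta$, simultaneously yields freeness, the lower bound $d_X(x_0, wx_0) \geq \tfrac16 d_{\Gamma_H}(1,w)$, and the ``within distance $O(\delta,K)$'' statement about geodesics that underlies the quasigeodesic-to-quasi-isometric-embedding passage.
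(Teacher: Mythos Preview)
Your argument is correct and follows essentially the same route as the paper: reduce everything to the concatenation estimate (Propositions~\ref{prop:persistent} and~\ref{prop:concat}), use the Gromov product bound and reducedness to control the persistent subgeodesics, and read off both freeness and the $(6, \cdot)$-quasi-isometric embedding from the resulting lower bound $d_X(x_0, w x_0) \geq \tfrac{1}{6}\sum_i d_X(x_0, b_i x_0)$.

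Two small remarks. First, the paper routes the same computation through Proposition~\ref{prop:concat_hyperbolic} and the translation length $\tau(g)$ rather than invoking Proposition~\ref{prop:concat} directly; your choice is arguably cleaner, since it sidesteps the need for the word to be cyclically reduced (which the cyclic hypothesis $g_0 = g_n$, $g_{n+1} = g_1$ in \eqref{eq:concat_hyperbolic} implicitly requires). Second, your claim that the extension to all of $\Gamma_H$ gives additive constant $O(\delta)$ is slightly too optimistic: two points at distance roughly $K$ from a common vertex along distinct edges can have $\Gamma_H$-distance about $2K$ but $X$-distance only $O(\delta)$ (since the Gromov product bound allows the edge-geodesics to fellow-travel for length up to $K$), so the additive constant must be $O(\delta, K)$, as in the statement and as you yourself note in your final paragraph.
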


\begin{proof}
We shall choose $K_0 > 2C$, where $C$ is the constant from Proposition
\ref{prop:concat_hyperbolic}.  Let $g = g_1 \ldots g_n$ be a reduced
word in the generating set $A$.  The $g_i$ satisfy the Gromov product
inequalities from \eqref{eq:concat_hyperbolic}. Proposition
\ref{prop:concat_hyperbolic} then implies that $\tau(g) \ge C n$, so
in particular all reduced words are non trivial, so ${\overline A}$
freely generates a free group.

We now show that $\Gamma_H$ is quasi-isometrically embedded in $X$,
for quasi-isometry constants that are independent of the lengths of
the $g_i$.  The translation length $\tau(g)$ is a lower bound for
$d_X(x_0, g x_0 )$, and as $g_1 \ldots g_n$ is a reduced word
\[ d_{\Gamma_H} (x_0, g x_0) = \sum_{i=1}^n d_X(x_0, g_i x_0). \]
Therefore conclusion \eqref{eq:concat_tau} of Proposition
\ref{prop:concat_hyperbolic} implies the left hand bound below
\[d_{\Gamma_H}(x_0, g x_0) - 5Kk \le d_X(x_0, g x_{0}) \le
d_{\Gamma_H}(x_0, g x_0),  \]
where $K \ge K_0$, for the choice of $K_0$ given above.  The right
hand bound follows immediately from the triangle inequality.  As each
geodesic segment of $\Gamma_H$ has length at least $6K$, this implies
\[ \tfrac{1}{6} d_{\Gamma_H}(x_0, g x_0) \le d_X(x_0, g x_{0}) \le
d_{\Gamma_H}(x_0, g x_0).  \]
Finally, using thin triangles, we may extend this estimate to all
points $x$ and $y$ in $\Gamma_H$ to obtain
\[ \tfrac{1}{6} d_{\Gamma_H}(x, y) - 2 K + O(\delta) \le d_X(x_0, g
x_{0}) \le d_{\Gamma_H}(x_0, g x_0) + 2 K + O(\delta).  \]
as required.
\end{proof}

\section{Special case: one generator} \label{section:k=1}

The probability that $w_n$ is hyperbolic tends to one, so in
particular, if $X = \Cay(G, Y)$, then the probability that $E(w_n)$ is
hyperbolically embedded in $(G,Y)$ tends to one. In this section we show that the
probability that $w_n$ is weakly asymmetric tends to one, i.e. the
probability that $E(w_n) = \langle w_n \rangle \ltimes E(G)$ tends to
one, and this is precisely the special case of Theorem
\ref{theorem:main} when $k = 1$.

\begin{prop} \label{prop:k=1} %
Let $G$ be a countable group acting acylindrically hyperbolically on
the separable space $X$, and let $\mu$ be an admissible probability
distribution on $G$.  Then the probability that $w_n$ is hyperbolic
and weakly asymmetric tends to one as $n$ tends to infinity.
\end{prop}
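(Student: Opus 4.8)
The plan is to show that, with asymptotic probability one, $w_n$ is hyperbolic (which is already known from the matching estimates, Proposition \myref{match axis}), and that any element $h \in E(w_n)$ which coarsely stabilizes an axis $\alpha_n$ for $w_n$ must actually lie in $\langle w_n \rangle \ltimes E(G)$. The starting point is the observation, due to Bestvina and Fujiwara, that if a group element coarsely stabilizes a sufficiently long segment of an axis of a hyperbolic element $g$ (say of length $L_0 = L_0(\delta, g)$), then it lies in $E(g)$. Fix a weakly asymmetric element $g$ in the support of $\mu$ with axis $\alpha_g$; such a $g$ exists by admissibility. By Proposition \myref{match fixed}, with asymptotic probability one the ``middle'' portion $\gamma_n^-$ of $\gamma_n$ (and hence, using Proposition \myref{match axis}, a long subsegment of $\alpha_n$) has an $(L, K)$-match with $\alpha_g$ for any prescribed $L$; in fact I would want this match to occur, and moreover to occur at two (or more) disjoint locations along $\alpha_n$, which one gets from the ergodic-theoretic argument in the proof of \myref{match fixed} by taking $L$ large enough that two disjoint matches of length $L_0$ fit inside a match of length $L$. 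This gives group elements $h_1, h_2$ with $h_i \alpha_g$ fellow-travelling disjoint segments of $\alpha_n$.

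The next step is the group-theoretic core. Suppose $h \in E(w_n) = \mathrm{stab}(\Lambda(w_n))$. Then $h$ coarsely stabilizes $\alpha_n$, so (after increasing $L$ so the matched segments are long compared to the translation length of $w_n$ and to $L_0$) $h$ coarsely stabilizes long segments of both $h_1 \alpha_g$ and $h_2 \alpha_g$ — possibly mapping one to the other, but by composing with a suitable power of $w_n$ (whose translation length is controlled via Proposition \ref{prop:concat_hyperbolic}) we may arrange that $h$ coarsely stabilizes each $h_i \alpha_g$ individually, and then $h_i^{-1} h h_i$ coarsely stabilizes a long segment of $\alpha_g$. By the Bestvina–Fujiwara fact, $h_i^{-1} h h_i \in E(g)$, i.e. $h \in h_i E(g) h_i^{-1} = h_i (\langle g \rangle \ltimes E(G)) h_i^{-1}$ for $i = 1, 2$. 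Since the two copies $h_1 \alpha_g$ and $h_2 \alpha_g$ are disjoint and far apart, the intersection $h_1 (\langle g\rangle \ltimes E(G)) h_1^{-1} \cap h_2 (\langle g\rangle \ltimes E(G)) h_2^{-1}$ must be finite (an infinite-order element would coarsely stabilize both axes, forcing them to be close), hence contained in $E(G)$; combined with the fact that $h$ stabilizes $\Lambda(w_n)$ this forces $h \in \langle w_n \rangle \ltimes E(G)$. Running this argument for all $h \in E(w_n)$ and using $E(w_n) \supseteq \langle w_n \rangle \ltimes E(G)$ gives $E(w_n) = \langle w_n \rangle \ltimes E(G)$, i.e. $w_n$ is weakly asymmetric.

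The main obstacle is the bookkeeping in the second step: making precise ``coarsely stabilizes'', controlling the constants so that (a) the matched segment of $\alpha_n$ is long enough that the Bestvina–Fujiwara threshold $L_0$ is exceeded even after a translate of $h$ by a bounded power of $w_n$ moves things around, (b) arranging two \emph{disjoint} long matches with translates of $\alpha_g$ rather than one, and (c) checking that the intersection of two conjugates of a virtually cyclic subgroup along disjoint axes is finite — this last is where acylindricity (or just the Bestvina–Fujiwara structure of $E(g)$) really gets used. All of these are ``soft'' in the sense that the probabilistic inputs (Propositions \myref{match fixed} and \myref{match axis}, and Proposition \ref{prop:exp drift} for the translation-length control) already hold with asymptotic probability one; the remaining work is purely coarse-geometric and deterministic, and I expect it to be somewhat delicate but not conceptually hard. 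A technical point to be careful about: one should only need to test elements $h$ in $E(w_n)$, and since $w_n$ is hyperbolic this is a virtually cyclic group, so there are only finitely many cosets of $\langle w_n\rangle \ltimes E(G)$ to rule out, but the argument above in fact handles all $h$ uniformly so this simplification is not even needed.
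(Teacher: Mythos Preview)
Your overall strategy is sound and indeed close in spirit to the paper's, but there is a genuine gap in the ``group-theoretic core''. The claim that, after composing with a suitable power of $w_n$, the element $h$ coarsely stabilizes each matched segment $h_i\alpha_g$ individually, is not justified and in general fails.

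The problem is a quantifier mismatch. The match length $L$ in Proposition~\myref{match fixed} must be fixed before letting $n\to\infty$, while $\tau(w_n)$ grows linearly in $n$; so your parenthetical ``increasing $L$ so the matched segments are long compared to the translation length of $w_n$'' cannot be arranged. Now take $h\in E(w_n)$. Two things can go wrong. First, $h$ may swap the two ends of $\alpha_n$; composing with any power of $w_n$ still gives an orientation-reversing element, which cannot coarsely fix a segment of $\alpha_n$. Second, even if $h$ preserves the ends, after replacing $h$ by $w_n^m h$ the residual translation length along $\alpha_n$ may still be of order $\tau(w_n)$ --- think of the case $w_n = f^2$, $h = f$, where the best you can do is $\tau(w_n)/2$. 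In either case $w_n^m h$ moves the fixed-length segments $h_i\alpha_g$ completely off themselves, so the Bestvina--Fujiwara lemma gives no information and the intersection argument never gets started.

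These two failure modes are exactly the phenomena the paper isolates as \emph{reversibility} and failure of \emph{$K$-primitivity}, and it rules each out separately (Propositions~\ref{prop:reversible} and~\ref{prop:primitive}) using the self-matching estimate Proposition~\myref{match gamma_n}: a reversing element, or an element of intermediate translation length, forces a self-match of $\gamma_n$ of size a definite fraction of $\tau(w_n)\approx |\gamma_n|$, which is asymptotically ruled out. Only once $w_n$ is known to be irreversible and $K$-primitive can one reduce to an element that genuinely $K$-fixes a fundamental domain on $\alpha_n$, and then the single-translate match with the axis of a (deterministically constructed) $K$-asymmetric element finishes the job. Your two-disjoint-matches idea is essentially how the paper builds that $K$-asymmetric element (Proposition~\ref{prop:K-asymmetric}), but it does so once and for all rather than along $\alpha_n$; either way, the missing ingredient in your argument is the self-matching input from Proposition~\myref{match gamma_n} to handle primitivity and irreversibility.
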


We start in Section \ref{section:asymmetric} by giving some geometric
conditions which are sufficient to show that a group element is weakly
asymmetric. In Section \ref{section:probability} we show that the
probability that these conditions are satisfied by a random element
$w_n$ tends to one as $n$ tends to infinity.

\subsection{Asymmetric elements} \label{section:asymmetric}

Let $g$ be a hyperbolic isometry. Recall that a group element $g \in
G$ is \emph{primitive} if there is no element $h \in G$ such that $h^n
= g$ for $n > 1$. We now define a notion of coarse primitivity for
group elements. 

\begin{definition}
Let $\gamma$ be an axis for $g$, let $p_i$ be the projection of $g^i
x_0$ to $\gamma$, and set $P = \bigcup_{i \in \Z} p_i$. We say that
$g$ is \emph{$K$-primitive} if any element $h \in E(g)$ $K$-stabilizes
$P$, i.e. the Hausdorff distance $d_{\text{Haus}}(P, h P) \le K$.
\end{definition}

If $g$ is $K$-primitive, then $g$ is primitive, for $K = O(\delta)$
sufficiently large, and if the translation distance $\tau(g)$
satisfies $\tau(g) > K + O(\delta)$.  

Recall that for a hyperbolic element $g$, $\Lambda(g) = \{
\lambda_+(g), \lambda_-(g)\}$ is the set consisting of the pair of
attracting and repelling fixed points for $g$ in $\partial X$, and
$E(g) = \text{stab}(\Lambda(g)) $.  We shall write $E^+(g)$ for the
subgroup of $E(g)$ which preserves $\Lambda(g)$ pointwise,
i.e. $E^+(g) = \text{stab}(\lambda_+(g)) \cap
\text{stab}(\lambda_-(g))$. This subgroup has index at most $2$ in
$E(g)$.

\begin{definition}
We say a hyperbolic isometry $g$ is \emph{reversible} if
there is an element in $E(g)$ which switches the fixed points of $g$,
i.e. $E^+(g) \subsetneq E(g)$. Otherwise $g$ is \emph{irreversible}
and $E^+(g) = E(g)$. 
\end{definition}

We say that the $K$-stabilizer of a geodesic $\gamma = [p, q]$,
consists of all group elements $g$ such that if $d_X(p , g p ) \le K$
and $d_X(q, h q) \le K$.

\begin{definition}
Let $G$ be a countable group acting acylindrically hyperbolically on a
separable space $X$. We say that a group element $g$ is
\emph{$K$-asymmetric} if $g$ is hyperbolic with axis $\alpha_g$, and if
$p$ is a closest point on $\alpha_g$ to the basepoint $x_0$, then the
$K$-stabilizer for the geodesic $[ p, g p ]$ is equal to $E(G)$.
\end{definition}

We first show that every non-elementary subgroup $H$ of $G$ containing
a weakly asymmetric element, also contains a $K$-asymmetric element.

\begin{prop}\label{prop:K-asymmetric}
Let $G$ be a countable group acting acylindrically hyperbolically on a
separable space $X$, and let $H$ be a non-elementary subgroup $H$ of
$G$, which contains a weakly asymmetric element.  Then for any
constant $K \ge 0$, the subgroup $H$ contains a $K$-asymmetric element
$g$.
\end{prop}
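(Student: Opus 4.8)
The plan is to start with the weakly asymmetric element $g_0 \in H$, so $E(g_0) = \langle g_0 \rangle \ltimes E(G)$, and upgrade it to an element whose $K$-stabilizer of a single fundamental segment of its axis is exactly $E(G)$. First I would pass to a high power: replacing $g_0$ by $g_0^N$ does not change $E(g_0)$, but it makes the translation length $\tau(g_0^N)$ as large as we like. The point of taking a large power is that, for a hyperbolic element $g$ with very long translation length relative to $K$ and $\delta$, an element $h$ that $K$-stabilizes the segment $[p, gp]$ (where $p$ is the closest point of the axis $\alpha_g$ to $x_0$) must coarsely preserve a long segment of $\alpha_g$, and by the Bestvina--Fujiwara criterion (coarse stabilization of a sufficiently long segment of an axis forces membership in $E(g)$) such an $h$ lies in $E(g) = \langle g \rangle \ltimes E(G)$. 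So the $K$-stabilizer of $[p, gp]$ is contained in $\langle g \rangle \ltimes E(G)$, and we then need to rule out all cosets $g^m E(G)$ with $m \neq 0$: such an element moves $p$ a distance roughly $|m|\tau(g) \gg K$, hence cannot $K$-stabilize the segment. This shows any sufficiently high power $g = g_0^N$ of $g_0$ is $K$-asymmetric — \emph{provided} it is not reversible, i.e.\ provided no element of $E(G)\langle g\rangle$ flips the endpoints, but actually the endpoint-flipping issue is exactly what the $K$-stabilizer computation handles, since a flip would also move $p$ to near $gp$, a distance $\approx \tau(g)$ away.

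Wait — I need to be careful about one subtlety: an element of $E(g)$ of the form $h\, g^m$ with $h \in E(G)$ and $m \neq 0$ could in principle \emph{still} $K$-stabilize $[p,gp]$ if it behaves like a "near-reflection" swapping the two endpoints. Concretely, if $g$ is reversible there is $r \in E(g)$ with $r \alpha_g = \alpha_g$ reversing orientation, and $r$ might send $[p,gp]$ approximately to itself (reversed), so $r$ would $K$-stabilize this geodesic in the $K$-stabilizer sense (which only asks $d_X(p,rp)\le K$ and $d_X(gp, r(gp)) \le K$ — this does \emph{not} hold for an orientation reversal sending $p \mapsto gp$). Actually for an orientation reversal $r$ fixing the midpoint of $[p,gp]$ we get $rp \approx gp$, so $d_X(p, rp) \approx \tau(g) \gg K$: good, such $r$ is excluded. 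But a reversal with a different "center" could conceivably fix $p$ and move $gp$ far; that also fails the stabilizer condition on the $gp$ endpoint. So in all cases the only elements $K$-stabilizing $[p,gp]$ (in the two-endpoint sense) are those moving both $p$ and $gp$ a bounded amount, and by acylindricity / the Bestvina--Fujiwara trick these are forced into $E(g)$, then into $E(G)$ by the length argument. The cleanest write-up fixes $N$ with $N\tau(g_0) > 2K + O(\delta)$ and $N\tau(g_0)$ large enough for the Bestvina--Fujiwara criterion to apply to segments of length $\ge K + O(\delta)$.

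I would organize the argument as follows. (1) Choose $g = g_0^N$ with $N$ large; record $E(g) = E(g_0) = \langle g_0\rangle \ltimes E(G)$ and note $g$ is hyperbolic with an axis $\alpha_g$; let $p$ be a closest point on $\alpha_g$ to $x_0$. (2) Show the $K$-stabilizer $S$ of $[p,gp]$ is contained in $E(g)$: any $h\in S$ moves $p$ and $gp$ a bounded amount, so $h$ coarsely fixes a segment of $\alpha_g$ of length $\ge \tau(g) - O(\delta) - 2K$, which for $N$ large is long enough that Bestvina--Fujiwara forces $h \in \mathrm{stab}(\Lambda(g)) = E(g)$. (3) Write such an $h$ as $h = e\, g_0^m$ with $e \in E(G)$ (using the semidirect product structure), and observe that if $m \neq 0$ then $d_X(p, hp)$ or $d_X(gp, h\,gp)$ is at least roughly $|m|\,\tau(g_0) - O(\delta) > K$, contradicting $h \in S$. (Use that $E(G)$ acts with bounded orbits on $X$ — indeed $E(G)$ fixes $\partial X$ pointwise and has finite orbits, so its displacement of any point is bounded by a constant depending only on the action; combine with the fact that $g_0^m$ displaces $p$ by $\approx |m|\tau(g_0)$.) Hence $m = 0$ and $S \subseteq E(G)$. (4) Conversely $E(G) \subseteq S$ since $E(G)$ has bounded displacement, and by taking $N$ large we may assume this bound is $\le K$ — or more robustly, absorb the $E(G)$-displacement bound into the standing assumption $K \ge$ some constant depending on the action. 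Therefore $S = E(G)$ and $g$ is $K$-asymmetric.

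The main obstacle is Step (2)/(3): making precise that a two-endpoint $K$-stabilizer of a long fundamental domain really does coarsely fix a genuinely long sub-segment of the axis (one must control how far $p$ is from $\alpha_g$, and how the geodesic $[p,gp]$ fellow-travels $\alpha_g$ — this is $O(\delta)$ by thinness since $p, gp \in \alpha_g$), and then invoking the Bestvina--Fujiwara statement in the quantitatively correct form. The reversibility bookkeeping in Step (3) — ensuring an orientation-reversing element of $E(g)$ cannot sneak into the $K$-stabilizer — is the one place to be genuinely careful, but it is handled uniformly by the displacement estimate once $\tau(g)$ is large compared to $K$.
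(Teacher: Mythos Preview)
There is a genuine gap in Step~(3), and it cannot be fixed by taking $N$ larger. You write $h = e\,g_0^m$ and claim that for $m \neq 0$ the displacement $d_X(p, hp)$ is at least $|m|\,\tau(g_0) - O(\delta) > K$. But $\tau(g_0)$ is a fixed number that does not depend on $N$: passing to the power $g = g_0^N$ enlarges the \emph{segment} $[p, gp]$ and enlarges $\tau(g)$, but it does not enlarge $\tau(g_0)$. Concretely, if the given $K$ satisfies $K \ge \tau(g_0) + O(\delta)$, then $g_0$ itself lies in the $K$-stabilizer of $[p, g_0^N p]$ for every $N$, since $d_X(p, g_0 p) \approx \tau(g_0) \le K$ and $d_X(g_0^N p, g_0^{N+1} p) \approx \tau(g_0) \le K$. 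Thus $g_0 \in S \setminus E(G)$, and $g_0^N$ is \emph{not} $K$-asymmetric for any $N$. Your argument only works in the regime $K < \tau(g_0) - O(\delta)$, whereas the proposition asks for arbitrary $K \ge 0$.

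This is also why the hypothesis that $H$ is non-elementary is present --- your argument never uses it, which is a warning sign. The paper's proof uses non-elementarity to pick a second hyperbolic element $f \in H$ independent from the weakly asymmetric element $h$, and takes $g = h^a f^b h^a$. The point is that the axis of $g$ then contains two long subsegments $\gamma_1, \gamma_2$ that $O(\delta)$-fellow-travel two \emph{distinct} translates $u_1\alpha_h$, $u_2\alpha_h$ of the axis of $h$. Any $g'$ in the $K$-stabilizer of $[p, gp]$ must (for $a$ large) coarsely stabilize both $\gamma_1$ and $\gamma_2$, hence by Bestvina--Fujiwara lies in $E(u_1 h u_1^{-1}) \cap E(u_2 h u_2^{-1})$. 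Since $h$ is weakly asymmetric, each factor is $u_i \langle h \rangle u_i^{-1} \ltimes E(G)$; the hyperbolic parts of these two conjugates have distinct fixed-point pairs at infinity, so their intersection is exactly $E(G)$. The second, transverse copy of $\alpha_h$ is what kills the unwanted $\langle h \rangle$-component of the stabilizer --- something no power of $g_0$ alone can achieve.
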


We will use the following lemma, which follows from work of Bestvina
and Fujiwara.

\begin{lemma}\label{lemma:stabilize}\cite[Proposition 6]{bestvina-fujiwara}
Let $g$ be a hyperbolic isometry with axis $\alpha_g$. Then for any
number $K \ge 0$ there is a $D$, depending on $g, \delta$ and $K$,
such that if $h$ $K$-coarsely stabilizes a segment of $\alpha_g$ of
length at least $D$, then $h$ lies in $E(g)$.
\end{lemma}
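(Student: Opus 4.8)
The plan is to use the standard fact, due to Bestvina and Fujiwara, that an isometry of a hyperbolic space which coarsely preserves a sufficiently long subsegment of a quasigeodesic axis must coarsely preserve the endpoints of that axis at infinity, and hence lies in the stabilizer of the pair $\Lambda(g)$, which by the Dahmani--Guirardel--Osin identification $E(g) = \mathrm{stab}(\Lambda(g))$ equals $E(g)$. First I would fix the axis $\alpha_g$ as a $(1,O(\delta))$-quasigeodesic connecting $\lambda_-(g)$ to $\lambda_+(g)$, and let $\sigma \subset \alpha_g$ be a subsegment of length at least $D$ (to be chosen) that $h$ $K$-coarsely stabilizes, meaning the Hausdorff distance between $h\sigma$ and $\sigma$ is at most $K$. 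The key geometric input is that in a $\delta$-hyperbolic space, a quasigeodesic segment of length $\gg \tau(g)$ lying along $\alpha_g$ together with its $h$-image, which fellow-travels it, forces $h$ to move points of the whole bi-infinite quasigeodesic $\alpha_g$ a bounded amount, provided one slides along by translation; quantitatively, one controls $d_X(x, h\beta(x))$ for $x$ on $\alpha_g$ by a constant depending only on $K$, $\delta$, and the quasigeodesic constants of $\alpha_g$.

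The main step is then a projection/contradiction argument. Suppose $h\lambda_+(g) \notin \Lambda(g)$; then the three boundary points $\lambda_+(g)$, $\lambda_-(g)$, $h\lambda_+(g)$ (or $h\lambda_-(g)$) span an ideal triangle, and the $h$-image of a long subsegment of $\alpha_g$ must both track $\alpha_g$ (by the $K$-coarse-stabilization hypothesis, after composing with an appropriate power of $g$ to recenter) and converge to a different endpoint, which is impossible once the subsegment is long enough: a subsegment of $\alpha_g$ of length $L$ and its image under $h$ can $K$-fellow-travel for at most a bounded portion if their endpoints at infinity differ, by thinness of ideal triangles and the Morse property of quasigeodesics. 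Choosing $D$ larger than this bound — explicitly $D = D(g,\delta,K)$ depending on $\tau(g)$, the Morse constant for $(1,O(\delta))$-quasigeodesics, and $K$ — yields a contradiction. Hence $h$ fixes the set $\{\lambda_+(g),\lambda_-(g)\} = \Lambda(g)$, so $h \in \mathrm{stab}(\Lambda(g)) = E(g)$.

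The hard part is making the "recentering" precise: the hypothesis only says $h$ coarsely stabilizes \emph{some} length-$D$ subsegment of $\alpha_g$, not that $h$ coarsely stabilizes $\alpha_g$ setwise, so a priori $h$ could translate along $\alpha_g$ by a large amount. One must observe that since $g$ acts cocompactly on $\alpha_g$ (with translation length $\tau(g)$), composing $h$ with a suitable power $g^m$ moves the stabilized subsegment back near the original one, and $g^{-m}h$ (or $hg^{-m}$) then coarsely stabilizes a subsegment near a fixed reference point; since $g^m \in E(g)$ trivially, it suffices to prove $g^{-m}h \in E(g)$, and one can reduce to the case of an isometry coarsely stabilizing a subsegment in a bounded neighborhood of a fixed point of $\alpha_g$. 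From there the endpoint-preservation argument above applies. I would cite \cite{bestvina-fujiwara} for the precise statement (Proposition~6 there) and only sketch the quasigeodesic bookkeeping, since the quantitative dependence of $D$ on $g$ (through $\tau(g)$) is exactly what that reference provides.
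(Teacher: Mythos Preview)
The paper does not supply its own proof of this lemma: it is simply cited from \cite[Proposition~6]{bestvina-fujiwara}, so there is no in-paper argument to compare against. That said, your sketch is not a correct reconstruction of the Bestvina--Fujiwara proof, and in fact contains a genuine gap.

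The problem is your step ``a subsegment of $\alpha_g$ of length $L$ and its image under $h$ can $K$-fellow-travel for at most a bounded portion if their endpoints at infinity differ.'' The length of such a fellow-travelling portion depends on \emph{how close} $h\lambda_\pm(g)$ is to $\Lambda(g)$ in $\partial X$, i.e.\ on $h$, and there is no bound uniform in $h$ coming from $g,\delta,K$ alone. A clean counterexample in the absence of acylindricality: take $h$ parabolic with unique fixed point $\lambda_+(g)$. Then $h\notin\mathrm{stab}(\Lambda(g))=E(g)$ (it moves $\lambda_-(g)$), yet $h$ moves points on $\alpha_g$ near $\lambda_+(g)$ by arbitrarily little, so for every $D$ it $K$-coarsely stabilizes a segment of $\alpha_g$ of length $\ge D$. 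Your case analysis also misses exactly this situation, where $h$ fixes one endpoint but not the other.

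What makes the lemma true is the acylindricality hypothesis (WPD in \cite{bestvina-fujiwara}), which you never invoke. The actual argument is a pigeonhole: if $h$ $K$-stabilizes a segment $[p,q]\subset\alpha_g$ of length $D$, then for $i=0,\ldots,N$ the conjugates $g^{-i}hg^{i}$ all $(K+O(\delta))$-stabilize a common subsegment of length roughly $D-N\tau(g)$. Choosing $N=N(K+O(\delta))$ from the acylindricality functions and $D\ge N\tau(g)+R(K+O(\delta))+O(\delta)$, acylindricality forces two of these conjugates to coincide, so $h$ commutes with a nontrivial power of $g$, hence $h\in\mathrm{stab}(\Lambda(g))=E(g)$. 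This is where the dependence of $D$ on $g$ (through $\tau(g)$) genuinely enters. Your ``recentering'' discussion is also unnecessary: in the paper's definition of $K$-stabilizing, both endpoints of the segment move by at most $K$, so there is no large translation along $\alpha_g$ to undo.
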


\begin{proof}[Proof (of Proposition \ref{prop:K-asymmetric}).]
Let $h$ be a weakly asymmetric element in $H$, with axis
$\gamma_h$. As $H$ is non-elementary, and $E(h)$ is virtually cyclic,
there is a hyperbolic element $f$ in $H$ which does not lie in
$E(h)$. In particular, $h$ and $f$ are independent, i.e. their fixed
point sets in $\partial X$ are disjoint.  Consider the group element
$g = h^a f^b h^a$. For all $a$ and $b$ sufficiently large, the
translation lengths of $h^a$ and $f^b$ are much larger than twice any
of the Gromov products between distinct elements of $\{ h^{\pm a},
f^{\pm b}\}$, so we may apply Proposition
\ref{prop:concat_hyperbolic}, which in particular implies that $g$ is
hyperbolic. Furthermore, for any constant $D > 0$, there is an $a$
sufficiently large such that the axis $\gamma_g$ of $g$ has a
subsegment $\gamma_1$ of length at least $D$ which is contained in an
$O(\delta)$-neighbourhood of $\gamma_h$, and a disjoint subsegment
$\gamma_2$ of length at least $D$ which is contained in an
$O(\delta)$-neighbourhood of $h^a f^b \gamma_h$. We shall choose an
$a$ sufficiently large such that this holds for $D > D_h + O(K,
\delta)$, where $D_h$ is the constant from Lemma \ref{lemma:stabilize}
applied to the hyperbolic element $h$ with constant $K +
O(\delta)$. Finally, we may choose $a$ to be much larger than $b$, so
that $D$ is at least three times as large as the distance between
$\gamma_1$ and $\gamma_2$. This is illustrated in Figure
\ref{pic:double fellow travel} below.

\begin{figure}[H]
\begin{center}
\begin{tikzpicture}

\tikzstyle{point}=[circle, draw, fill=black, inner sep=1pt]

\draw (0, 0) node [point, label=left:{$x_0$}] {} --
      (4, 0) node [point, label=right:{$h^a x_0$}] {} --
      (4, 3) node [point, label=left:{$h^a f^b x_0$}] {} --
      (8, 3) node [point, label=right:{$h^a f^b h^a x_0 = g x_0$}] {};

\draw (0, -1) .. controls (0.25, -0.5) and (0.5, -0.5) .. 
      (1, -0.5) --  
      (3, -0.5) .. controls (3.5, -0.5) and (3.75, -0.5) .. 
      (4, -1) node [right] {$\gamma_h$};

\draw [very thick] (1, 0.5) -- (3, 0.5) node [midway, above] {$\gamma_1$};

\begin{scope}[xshift=+4cm, yshift=+3cm, yscale=-1]
\draw (0, -1) .. controls (0.25, -0.5) and (0.5, -0.5) .. 
      (1, -0.5) --  
      (3, -0.5) .. controls (3.5, -0.5) and (3.75, -0.5) .. 
      (4, -1) node [right] {$h^a f^b \gamma_h$};
\draw [very thick] (1, 0.5) -- (3, 0.5) node [midway, below] {$\gamma_2$};
\end{scope}

\draw (0, 1) .. controls (0.25, 0.5) and (0.5, 0.5) ..
             node [pos=0.3, point, label=above:$p$] {}
      (1, 0.5) --
      (3, 0.5) .. controls (3.5, 0.5) and (4.5, 2.5) ..
      (5, 2.5) --
      (7, 2.5) .. controls (7.5, 2.5) and (7.75, 2.5) ..
      (8, 2) node [below] {$\gamma_g$}
             node [pos=0.8, point, label=above:$g p$] {};

\end{tikzpicture}
\caption{The axis $\gamma_g$ fellow travels two translates of
  $\gamma_h$.}
\label{pic:double fellow travel}
\end{center}
\end{figure}
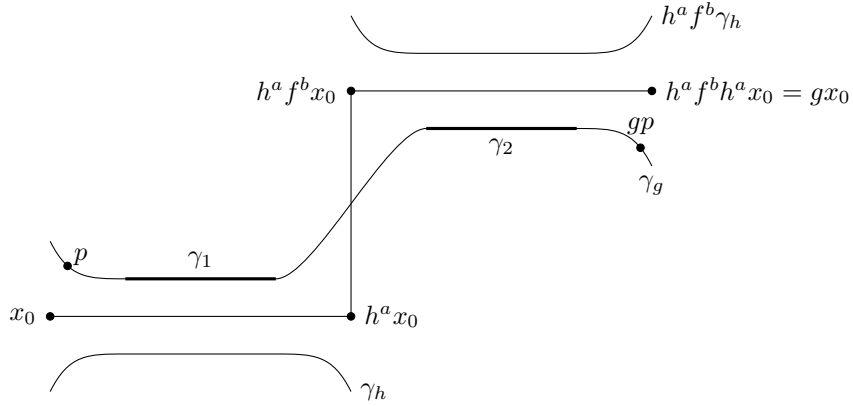

Let $p$ be a closest point on $\gamma_g$ to the basepoint $x_0$.  If
an element $g'$ in $G$ $K$-coarsely stabilizes $[p, g p]$, then $g'$
$(K + O(\delta))$-stabilizes $\gamma_1$ and $\gamma_2$.  The segments
$\gamma_1$ and $\gamma_2$ fellow travel axes of two distinct
translates of $\gamma_h$, say $u_1 \gamma_h$ and $u_2 \gamma_h$, and
so $g'$ $(K + O(\delta))$-stabilizes segments of these axes of length
at least $D_h$. Therefore by Lemma \ref{lemma:stabilize}, $g'$ lies in
\[ E(u_1 h u_1^{-1}) \cap E(u_2 h u_2^{-1}), \]
which is equal to
\[ \left( u_1 \< h \> u_1^{-1} \ltimes E(G) \right) \cap \left( u_2 \<
h \> u_2^{-1} \ltimes E(G) \right), \]
as $h$ is weakly asymmetric.  Hyperbolic elements in each of these
subgroups have distinct fixed points in $\partial X$, and so cannot be
equal. The set of non-hyperbolic elements is equal to $E(G)$,
therefore the intersection of the two subgroups is exactly $E(G)$, and
so $g' \in E(G)$, as required.
\end{proof}

Finally, we show that these geometric conditions are sufficient to
show that a group element $g$ is weakly asymmetric.

\begin{prop}\label{prop:weakly asymmetric}
Let $G$ be a countable group acting acylindrically hyperbolically on
the separable space $X$. Then there is a constant $K$, depending only
on $\delta$, such that if $g$ is an element which is hyperbolic,
$K$-primitive, $K$-asymmetric and irreversible, then $g$ is weakly
asymmetric.
\end{prop}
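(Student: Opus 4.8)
The goal is the reverse of the containment $\< g \> E(G) \subseteq E(g)$ recorded earlier: since $\< g \> \cap E(G) = \{1\}$ (as $g$ has infinite order and $E(G)$ is finite), Proposition \ref{prop:semidirect} already gives $\< g \> E(G) = \< g \> \ltimes E(G)$, so it suffices to prove $E(g) \subseteq \< g \> E(G)$. I would fix the axis $\gamma = \alpha_g$ and, choosing the $g$-equivariant representatives, set $p_i = g^i p_0$ with $p_0 = p$ the closest point of $\gamma$ to $x_0$; then $P = \bigcup_i p_i$ is the $\< g \>$-orbit of $p$, its points spaced by $\tau(g)$ along $\gamma$ up to $O(\delta)$, and $[p, gp] = [p_0, p_1]$.

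The plan is to take an arbitrary $h \in E(g)$ and produce an integer $j$ with $g^{-j} h \in E(G)$, whence $h \in g^j E(G) \subseteq \< g \> E(G)$. First I use irreversibility: since $E(g) = E^+(g)$, the element $h$ fixes each of $\lambda_+(g), \lambda_-(g)$ individually, so it preserves the orientation of $\gamma$. Next I use $K$-primitivity: $h$, and indeed every element of $E(g)$, $K$-stabilizes $P$, so there is an index $j$ with $d_X(h p_0, p_j) \le K$; as $p_j = g^j p_0$, the element $h' = g^{-j} h \in E(g)$ satisfies $d_X(h' p_0, p_0) \le K$. The crucial step is to propagate this to the neighbouring point: writing $d_X(h' p_1, p_k) \le K$ for the index $k$ furnished by $K$-primitivity applied to $h'$, the triangle inequality forces $d_X(p_0, p_k) \in [\tau(g) - 2K, \tau(g) + 2K]$ up to $O(\delta)$, and orientation-preservation together with the regular spacing of $P$ pins down $k = 1$. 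Consequently $d_X(h' p_1, p_1) \le K$, so $h'$ lies in the $K$-stabilizer of $[p, gp]$.

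Finally I invoke $K$-asymmetry: the $K$-stabilizer of $[p, gp]$ equals $E(G)$, so $h' = g^{-j} h \in E(G)$ and $h \in \< g \> E(G)$. Since $h \in E(g)$ was arbitrary, this yields $E(g) = \< g \> E(G) = \< g \> \ltimes E(G)$, i.e. $g$ is weakly asymmetric.

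The main obstacle is the constant bookkeeping concealed in the identification $k = 1$. For the coarse action of $h'$ on the indices of $P$ to be an honest shift, one needs the spacing $\tau(g)$ to dominate the stabilization constant, so that the nearest orbit point is unambiguous; and one must absorb the $O(\delta)$ errors arising from working with a quasi-axis rather than a genuine bi-infinite geodesic. Here $K$-asymmetry is what rescues the argument: testing the $K$-stabilizer of $[p, gp]$ against $g$ itself shows that $\tau(g)$ must be large relative to $K$ (otherwise $g$, which is hyperbolic and hence not in the finite group $E(G)$, would $K$-stabilize $[p, gp]$ and violate asymmetry). I would therefore fix $K$ to be a sufficiently large multiple of $\delta$ at the outset, chosen so that these slacks are controlled and the index identification is valid; the remaining verifications are routine applications of thin triangles and the definition of the Gromov product.
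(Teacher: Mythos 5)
Your proposal is correct and follows essentially the same route as the paper's proof: use $K$-primitivity (together with irreversibility) to replace $h \in E(g)$ by a power-of-$g$ translate that $K$-coarsely stabilizes $[p, gp]$, apply $K$-asymmetry to place that translate in $E(G)$, and conclude with Proposition \ref{prop:semidirect}. The paper compresses the index-shifting step into one sentence; your bookkeeping of the shift index, the role of irreversibility, and the lower bound on $\tau(g)$ is a faithful expansion of the same argument.
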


\begin{proof}
Let $g$ be a group element in $G$ which is hyperbolic, irreversible,
$K$-primitive and $K$-asymmetric, and let $h$ be an element of $E(g)$.
Let $\alpha_g$ be an axis for $g$, and let $p$ be a closest point on
$\alpha_g$ to the basepoint $x_0$.  As $g$ is $K$-primitive, we may
multiply by a power of $g$, so that $g^n h$ $K$-coarsely fixes $[p, g
p]$. As $g$ is $K$-asymmetric, this implies that $g^n h$ lies in
$E(G)$, and so $h$ lies in $\< g \> E(G)$. Finally, as $g$ is
hyperbolic, $\< g \> E(G)$ is a semidirect product $\< g \> \ltimes
E(G)$, by Proposition \ref{prop:semidirect}.
\end{proof}

\subsection{Random elements are asymmetric} \label{section:probability}

In this section we show that the geometric properties defined in the
previous section hold for random elements $w_n$ with asymptotic
probability one.

We start by showing that the translation length $\tau(w_n)$ also grows
linearly, using Proposition \ref{prop:concat_hyperbolic}.

\begin{lemma}\label{lem:tau_equal_progress}
Let $G$ be a countable group acting acylindrically hyperbolically on
the separable space $X$, and let $\mu$ be an admissible probability
distribution on $G$. For any $0 < \e < 1$ the probability that
$\tau(w_n) \ge (1-\e) \norm{ \gamma_n }$ goes to $1$.
\end{lemma}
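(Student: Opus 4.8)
The plan is to combine the axis-matching estimate, Proposition \myref{match axis}, with a nearest-point projection argument, and then to absorb the resulting additive error using the linear growth of $\norm{\gamma_n}$ from Proposition \myref{exp drift}. Recall that $\norm{\gamma_n} = d_X(x_0, w_n x_0)$ and that the reverse inequality $\tau(w_n) \le \norm{\gamma_n}$ is automatic, so only the lower bound is at issue. Since the target error is additive, I would fix the desired $\e$ and actually run the whole argument with $\e/2$ in its place, leaving room to absorb constants at the end.

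By Proposition \myref{match axis}, with probability tending to one $w_n$ is hyperbolic with axis $\alpha_n$, and a subgeodesic $\sigma \subseteq \gamma_n$ of length at least $(1-\tfrac{\e}{2})\norm{\gamma_n}$ lies in a $K$-neighbourhood of $\alpha_n$. Let $p$ be the nearest point projection of $x_0$ to $\alpha_n$; by equivariance the nearest point projection of the endpoint $w_n x_0$ is $w_n p$, and since $\alpha_n$ is a $w_n$-invariant $(1, O(\delta))$-quasigeodesic we have $d_X(p, w_n p) = \tau(w_n) + O(\delta)$. First I would show that the projection $\sigma'$ of $\sigma$ to $\alpha_n$ has length at least $\length(\sigma) - O(\delta, K)$, using that projection onto the quasi-axis is coarsely $1$-Lipschitz and does not collapse a segment already fellow-travelling $\alpha_n$, and that $\sigma'$ is coarsely contained in the subarc of $\alpha_n$ between $p$ and $w_n p$, using that the projection of a geodesic lies within $O(\delta)$ of the arc between the projections of its endpoints. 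Combining these gives
\[ \tau(w_n) + O(\delta) = d_X(p, w_n p) \ge \length(\sigma') \ge (1-\tfrac{\e}{2})\norm{\gamma_n} - O(\delta, K), \]
so that $\tau(w_n) \ge (1-\tfrac{\e}{2})\norm{\gamma_n} - D$ for a constant $D = O(\delta, K)$.

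Finally I would absorb $D$ using linear progress. By Proposition \myref{exp drift}, with probability tending to one $\norm{\gamma_n} \ge (1-\tfrac{\e}{2}) L n$, which exceeds $2D/\e$ once $n$ is large. On the intersection of this event with the matching event, which still has probability tending to one, we get $(\e/2)\norm{\gamma_n} \ge D$, hence $\tau(w_n) \ge (1-\tfrac{\e}{2})\norm{\gamma_n} - D \ge (1-\e)\norm{\gamma_n}$, as required.

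The technical heart, and the step I expect to require the most care, is the pair of projection facts in the second paragraph: that projection to the \emph{quasi}-axis $\alpha_n$ is coarsely Lipschitz and that the matched segment projects into the arc $[p, w_n p]$ with controlled error. These are standard thin-triangle estimates, but they must be carried out for a $(1, O(\delta))$-quasigeodesic axis rather than a genuine geodesic, and one must also use Proposition \myref{match axis} in the form where the matched subsegment of $\gamma_n$ fellow-travels $\alpha_n$ directly, rather than only up to translation by some unknown group element.
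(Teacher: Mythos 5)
Your argument is essentially correct, but it takes a genuinely different route from the paper's. The paper never looks at the axis: it writes $w_n = w_m\cdot(w_m^{-1}w_n)$ with $m=\lfloor n/2\rfloor$, uses Proposition \myref{exp drift} to guarantee both halves have length at least $Ln$, uses Proposition \myref{exp gromov} (together with an independence/reversal argument to handle the Gromov product $\gp{x_0}{(w_m^{-1}w_n)^{-1}x_0}{w_m x_0}$, whose two entries are independent) to make both Gromov products in \eqref{eq:concat_hyperbolic} at most $\e Ln/5$, and then reads the lower bound on $\tau(w_n)$ directly off Proposition \ref{prop:concat_hyperbolic}. Your route instead goes through Proposition \myref{match axis} plus nearest-point projection onto the quasi-axis; the projection estimates you isolate (coarse non-collapsing of a segment already $K$-close to $\alpha_n$, the image landing coarsely in the arc between $p$ and $w_np$, and $d_X(p,w_np)=\tau(w_n)+O(\delta)$ for a $(1,O(\delta))$-quasi-axis) are all standard and correct, and the absorption of the additive error by linear progress is fine. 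The one substantive caveat is exactly the point you flag at the end: Proposition \myref{match axis} as stated only provides a \emph{match}, i.e.\ fellow-travelling after applying some unknown element of $G$, and since $\alpha_n$ is bi-infinite that literal statement gives no control on $\tau(w_n)$ at all; you genuinely need the stronger assertion that a $(1-\e)$-fraction of $\gamma_n$ itself lies in a $K$-neighbourhood of $\alpha_n$. That stronger form is what the cited source (Dahmani--Horbez) actually establishes, so your proof does go through, but it must import that version rather than quote \myref{match axis} verbatim. What the two approaches buy: the paper's decomposition is more elementary and self-contained (only the exponential decay estimates plus the deterministic concatenation lemma), and it comes with an implicit exponential failure rate; yours is conceptually shorter once the strong form of the axis fellow-travelling is in hand, but leans on a heavier external input and inherits only whatever convergence rate that input carries.
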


Notice that, in the notation of the lemma, $\norm{ \gamma_n } \ge
\tau(w_n)$ always holds.

\begin{proof}
We shall apply Proposition \ref{prop:concat_hyperbolic} with $g =
w_n$, considered as a product of $g_1 = w_m$ and $g_2 = w_m^{-1} w_n$,
where $m = \lfloor n/2 \rfloor$. Recall that $w_n = s_1 \ldots s_n$,
where the $s_i$ are the steps of the random walk, and are independent
$\mu$-distributed random variables. 

By linear progress, Proposition \myref{exp drift}, there exists $L>0$
such that both $\P( d_X(x_0, w_m x_0) \ge Ln )$ and $\P( d_X(x_0,
w_m^{-1} w_n x_0) \ge Ln )$ tend to one as $n$ tends to infinity (the
$L$ here is smaller than the $L$ in Proposition \myref{exp drift}).

By Proposition \myref{exp gromov}, the probability that the Gromov
product $\gp{x_0}{w_m^{-1} x_0}{w_m^{-1} w_n x_0} = \gp{w_m
  x_0}{x_0}{w_n x_0}$ is bounded above by $\e L n/5$ tends to one as
$n$ tends to infinity. For the other Gromov product $\gp{x_0}{(
  w_m^{-1} w_n )^{-1} x_0}{w_m x_0}$, the two random variables $(
w_m^{-1} w_n )^{-1} = s_n^{-1} \ldots s_{m+1}^{-1}$ and $w_m = s_1,
\ldots s_m$ are independent, and so the distribution of
\[ \gp{x_0}{( w_m^{-1} w_n )^{-1} x_0}{w_m x_0} = \gp{x_0}{s_n^{-1}
  \ldots s_{m+1}^{-1} x_0}{s_1 \ldots s_m x_0} \]
is the same as the distribution of 
\[ \gp{x_0}{s_{n-m}^{-1} \ldots s_1^{-1} x_0}{s_{n-m+1} \ldots s_n x_0
} = \gp{w_{n-m} x_0}{x_0}{w_n x_0}, \]
and so again by Proposition \myref{exp gromov}, the probability that
this Gromov product is bounded above by $\e L n/5$ tends
to one as $n$ tends to infinity.

Therefore, the probability that the two inequalities
\eqref{eq:concat_hyperbolic} are satisfied tends to one as $n$ tends
to infinity. Hence, by Proposition \ref{prop:concat_hyperbolic}, for
$n$ sufficiently large we have $\tau(w_n) \ge d_X(x_0, w_m x_0) +
d_X(w_m x_0, w_n x_0) - \e L n \geq (1-\e)|\gamma_n|$ with probability
that tends to $1$ as $n$ tends to infinity, as required.
\end{proof}

We now show that the probability that $w_n$ is irreversible tends to
one as $n$ tends to infinity.

\begin{prop} \label{prop:reversible} %
Let $G$ be a countable group acting acylindrically hyperbolically on
the separable space $X$, and let $\mu$ be an admissible probability
distribution on $G$.  Then for any $K$, the probability that $w_n$ is
irreversible tends to one as $n$ tends to infinity.
\end{prop}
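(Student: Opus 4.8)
The plan is to show that the probability $w_n$ is reversible tends to zero, by ruling out the only way reversibility can occur once $w_n$ is hyperbolic: namely, the existence of an element $u \in E(w_n)$ that swaps the two fixed points of $w_n$. Such a $u$ would coarsely invert the axis $\alpha_n$ of $w_n$; more precisely, after adjusting by a power of $w_n$, we would obtain an element $u'$ taking a long subsegment of $\alpha_n$ to a reversed long subsegment of $\alpha_n$. I would set this up using Lemma~\ref{lemma:stabilize}-style reasoning: if $w_n$ is reversible, then there is $u \in E(w_n) \setminus E^+(w_n)$, and $u$ conjugates $w_n$ to $w_n^{-1}$ up to finite ambiguity, so $w_n$ and $w_n^{-1}$ are conjugate in $G$. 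The strategy is then to show that $w_n$ is \emph{not} conjugate to $w_n^{-1}$ with asymptotic probability one.

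To do this I would use the matching estimates, especially Proposition~\myref{match fixed}. Fix a weakly asymmetric hyperbolic element $g$ in the support of $\mu$ (which exists by admissibility), with axis $\alpha_g$, and note that $g$ is \emph{not} conjugate to $g^{-1}$ whenever $g$ is irreversible — or, if that is not automatic, replace $g$ by a suitable power or product so that it has a directional asymmetry we can detect. The key point is orientation: a conjugacy $v w_n v^{-1} = w_n^{-1}$ would force the axis $\alpha_n$ to admit an orientation-reversing coarse self-symmetry (via $v$), which in turn would propagate to the long translated copies of $\alpha_g$ that $\alpha_n$ fellow-travels, by Proposition~\myref{match fixed} and Proposition~\myref{match axis}. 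Concretely: with asymptotic probability one, $\gamma_n^-$ has an $(L,K)$-match with $\alpha_g$ for $L$ as large as we like, and $\gamma_n$ has a $((1-\e)\norm{\gamma_n}, K)$-match with its own axis $\alpha_n$. Then an orientation-reversing symmetry of $\alpha_n$ would carry one long copy of $\alpha_g$ to another long copy with reversed orientation, forcing (by Lemma~\ref{lemma:stabilize}) an element of $E(g)$ that reverses $\alpha_g$, contradicting irreversibility of $g$ — and irreversibility of $g$ is something we get to arrange in advance by our choice of $g$.

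There is a subtlety I need to handle: $g$ being weakly asymmetric does not by itself say $g$ is irreversible. So I would first observe (or cite, e.g.\ from \cite{bestvina-fujiwara} or an easy argument) that in any non-elementary subgroup one can find an irreversible hyperbolic element, and in fact a weakly asymmetric irreversible one — for instance by passing to $g' = g f g$ for an independent hyperbolic $f$, as in the proof of Proposition~\ref{prop:K-asymmetric}, and checking that such $g'$ cannot be conjugate to its inverse because conjugating would have to preserve the ``combinatorial pattern'' of the factorization, which is asymmetric. Using ergodicity, $\gamma_\omega$ fellow-travels translates of $\alpha_{g'}$ infinitely often, so the same does $\alpha_n$ for the bulk of its length, with asymptotic probability one.

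The main obstacle I expect is making the orientation-reversal argument quantitatively precise in a non-proper space: one must ensure that a coarse orientation-reversing self-map of $\alpha_n$ really does force a reversing element of $E(g')$, controlling the constants ($K$, the fellow-traveling constants, the constant $D_{g'}$ from Lemma~\ref{lemma:stabilize}) uniformly in $n$. Once $n$ is large enough that $L \gg D_{g'} + O(\delta)$, the matches are long enough to apply Lemma~\ref{lemma:stabilize}, and the contradiction with irreversibility of $g'$ closes the argument. This gives that $w_n$ is not conjugate to $w_n^{-1}$, hence irreversible, with asymptotic probability one.
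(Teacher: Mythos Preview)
Your argument has a real gap at the step where you claim the reversal ``forces an element of $E(g')$ that reverses $\alpha_{g'}$''. What you actually obtain is: a segment $I \subset \alpha_n$ that $K$-fellow travels some $h_1 \alpha_{g'}$, and a reversal $v \in E(w_n) \setminus E^+(w_n)$. Then $vI$ is a segment of $\alpha_n$ at a \emph{different} location which $(K+O(\delta))$-fellow travels $v h_1 \alpha_{g'}$ with reversed orientation. To invoke Lemma~\ref{lemma:stabilize} for $g'$ you would need $vI$ also to fellow travel some translate $h_2 \alpha_{g'}$, so that $h_2^{-1} v h_1$ coarsely stabilizes (and reverses) a long piece of $\alpha_{g'}$. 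But Proposition~\myref{match fixed} only hands you a single match somewhere in $\gamma_n^-$; it gives no control over where that match sits, and in particular no reason why there should be a second $\alpha_{g'}$-match at the location $vI$ (equivalently, why the original match should sit at the coarse fixed point of some $w_n^k v$, whose fixed points are spaced $\tau(w_n)/2$ apart while $|I|=L$ is a fixed constant). Without that coincidence, no element of $E(g')$ is produced and the contradiction does not close. The preliminary step of manufacturing an irreversible $g'$ is also not as routine as you suggest, and turns out to be unnecessary.

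The paper's argument bypasses all of this by using the self-matching estimate for $\gamma_n$ directly, with no auxiliary axis at all. If some $v$ swaps the endpoints of $\alpha_n$, then after composing with a suitable power of $w_n$ any fundamental domain on $\alpha_n$ of length $\tau(w_n)$ is folded onto itself, yielding a $(\tfrac{1}{2}\tau(w_n) - O(\delta),\, O(\delta))$-self-match of that segment. By Proposition~\myref{match axis} a $(1-\e)\norm{\gamma_n}$-long piece of $\gamma_n$ fellow travels $\alpha_n$, and by Lemma~\ref{lem:tau_equal_progress} one has $\tau(w_n) \ge (1-\e)\norm{\gamma_n}$, both with asymptotic probability one; transferring the self-match to $\gamma_n$ then gives an $(\e'\norm{\gamma_n}, O(\delta))$-self-match of $\gamma_n$ for some fixed $\e' > 0$. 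Proposition~\myref{match gamma_n} says this occurs with probability tending to zero, finishing the proof. No choice of $g'$, and no control on where matches occur, is needed.
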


\begin{proof}
We can assume that $w_n$ is hyperbolic, with axis $\alpha_n$.
Now suppose $h \in E(w_n)$ is an element which reverses the
endpoints of $w_n$. Since $\alpha_n$ and $h \alpha_n$ are $O(\delta)$-fellow
travelers, this gives a $(\tfrac{1}{2} \tau(w_n) - O(\delta) ,
O(\delta))$-match for any subsegment of $\alpha_n$ of length $\tau(w_n)$.

Propositions \myref{match gamma_n} and \myref{match axis} (in view of
Lemma \ref{lem:tau_equal_progress}) then show that the probability
that this occurs tends to zero as $n$ tends to
infinity.

In fact, informally, if $\alpha_n$ had a match of
size approximately $\tau(w_n)/2$, then by Proposition \myref{match
  axis} the same would be true of $\gamma_n$, but this is ruled out by
Proposition \myref{match gamma_n} since Lemma
\ref{lem:tau_equal_progress} says that $\tau(w_n)$ is approximately
equal to $\norm{ \gamma_n }$.
\end{proof}

We now show that random walks give $K$-primitive elements with
asymptotic probability one.

\begin{prop} \label{prop:primitive} %
Let $G$ be a countable group acting acylindrically hyperbolically on
the separable space $X$, and let $\mu$ be an admissible probability
distribution on $G$.  Then for any $K$, the probability that $w_n$ is
$K$-primitive tends to one as $n$ tends to infinity.
\end{prop}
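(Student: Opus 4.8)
The plan is to show that a failure of $K$-primitivity forces a self-match of $\gamma_n$ of length linear in $n$, which Proposition \myref{match gamma_n} forbids, once a bounded ``torsion'' contribution has been accounted for. I would prove the statement for all $K$ above a constant $K_0$ depending only on $\delta$ and the action (the regime in which the proposition is used); since $K$-primitivity is monotone in $K$, this covers all larger $K$ as well. Throughout I work on the event that $w_n$ is hyperbolic with axis $\alpha_n$, that $\tau(w_n) \ge (1-\e)\norm{\gamma_n}$ by Lemma \ref{lem:tau_equal_progress}, and that $w_n$ is irreversible by Proposition \ref{prop:reversible}; all three hold with asymptotic probability one, so it suffices to bound the probability of failure on this event. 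There $E(w_n) = E^+(w_n)$, so every $h \in E(w_n)$ fixes $\Lambda(w_n)$ and acts on $\alpha_n$ as an orientation-preserving coarse translation by some amount $s(h)$. Since $P$ is coarsely the orbit $\{\alpha_n(i\,\tau(w_n))\}_{i\in\Z}$, the set $hP$ is $P$ translated by $s(h)$, and hence $d_{\mathrm{Haus}}(P,hP)$ equals $\mathrm{dist}(s(h),\tau(w_n)\Z)$ up to an error $O(\delta)$.

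Next I would analyze the homomorphism $h \mapsto s(h) \bmod \tau(w_n)$ from $E(w_n)$ to $\R/\tau(w_n)\Z$. Its image is a finite cyclic group $\tfrac1m\,\tau(w_n)\Z / \tau(w_n)\Z$ for some integer $m \ge 1$, and its kernel is generated by $\langle w_n\rangle$ together with the finite subgroup $T \le E(w_n)$ of elements that coarsely fix $\alpha_n$ pointwise. A finite-order element fixing both endpoints of $\alpha_n$ acts on $\alpha_n$ as a coarse isometry of $\R$ fixing both ends and of finite order, so its translation amount, and therefore its displacement of every point of $\alpha_n$, is $O(\delta)$; as $E(G) \le T$, the same bound applies to $E(G)$. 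Thus there is a constant $D_0 = O(\delta)$ so that every element of $T$ displaces $P$ by at most $D_0$, and consequently any $h = t\,w_n^{k}$ with $t \in T$ satisfies $d_{\mathrm{Haus}}(P,hP) \le D_0 + O(\delta)$.

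This yields a dichotomy. If $m = 1$ then every $h \in E(w_n)$ is of the form $t\,w_n^{k}$, so $d_{\mathrm{Haus}}(P,hP) \le D_0 + O(\delta)$ and $w_n$ is $K$-primitive provided $K \ge K_0 := D_0 + O(\delta)$. If instead $m \ge 2$, then the shifts $\{k\,\tau(w_n)/m\}$ are $\tfrac1m\tau(w_n)$-dense in $\R/\tau(w_n)\Z$, so some $h \in E(w_n)$ has $s(h) \bmod \tau(w_n) \in [\tfrac14\tau(w_n), \tfrac34\tau(w_n)]$. Translating by this $h$ the middle of $\gamma_n$, which $O(\delta)$-fellow travels $\alpha_n$, produces two disjoint subsegments of $\gamma_n$ of length at least $\tfrac14\tau(w_n) - O(\delta) \ge \tfrac15\norm{\gamma_n}$ that $O(\delta)$-fellow travel, i.e. an $(\tfrac15\norm{\gamma_n}, O(\delta))$-self-match of $\gamma_n$ (passing from $\alpha_n$ to $\gamma_n$ by Proposition \myref{match axis}, exactly as in Proposition \ref{prop:reversible}). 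By Proposition \myref{match gamma_n} the probability of such a match tends to zero. Hence $m \ge 2$ has probability tending to zero on our event, and combining with the reductions gives that $w_n$ is $K$-primitive with asymptotic probability one for every $K \ge K_0$.

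The step I expect to be the main obstacle is the uniform displacement bound $D_0 = O(\delta)$ for the finite-order elements of $E(w_n)$: one must be sure that, apart from a genuine (coarse) proper power, no element of $E(w_n)$ can move $P$ by more than an $O(\delta)$ amount, so that every violation of $K$-primitivity is captured by the case $m \ge 2$. Once this geometric dichotomy is in place, the proper-power case reduces to a linear self-match and is handled verbatim as in the proof that $w_n$ is irreversible.
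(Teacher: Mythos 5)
Your argument is correct in substance, and its endgame coincides with the paper's: a failure of $K$-primitivity is converted into an element of $E^+(w_n)$ translating along $\alpha_n$ by a definite fraction of $\tau(w_n)$, hence into a self-match of $\alpha_n$ of length linear in $\norm{\gamma_n}$ (via Lemma \ref{lem:tau_equal_progress}), which is transferred to $\gamma_n$ by Proposition \myref{match axis} and excluded by Proposition \myref{match gamma_n}. Where you differ is the intermediate step. The paper normalizes the offending element $h$ by a power of $w_n$ so that it moves $p_0$ by an amount in roughly $[K, \tfrac12\tau(w_n)]$, deduces from Proposition \ref{prop:concat_hyperbolic} (applied with a single factor) that $h$ is hyperbolic, and then takes a power $h^a$ whose translation is a definite fraction of $\tau(w_n)$; you instead invoke the finite-by-$\Z$ structure of $E^+(w_n)$ and dichotomize on the index $m=[E^+(w_n):\<w_n\>T]$. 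Your route makes the role of the torsion subgroup $T$ explicit, and your restriction to the irreversible case via Proposition \ref{prop:reversible} cleanly disposes of orientation-reversing elements, which the paper's proof passes over in silence; the cost is that the ``homomorphism'' $h\mapsto s(h)\bmod\tau(w_n)$ must be made precise, since $s$ is only a quasimorphism with defect $O(\delta)$ and its image is not literally the subgroup $\tfrac1m\tau(w_n)\Z/\tau(w_n)\Z$. This is repairable in a standard way (homogenize $s$, or work with the exact isomorphism $E^+(w_n)/T\cong\Z$ and note that $f^kt$ translates by $k\tau(w_n)/m+O(\delta)$ with error uniform in $k$), and your identification of the genuinely delicate point --- the uniform $O(\delta)$ displacement bound for finite-order elements fixing both ends of the axis --- is handled correctly. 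Finally, your restriction to $K\ge K_0=O(\delta)$ is harmless: the paper's own proof needs the same restriction to apply Proposition \ref{prop:concat_hyperbolic}, and the proposition is only ever invoked for such $K$.
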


\begin{proof}
Let $\alpha_g$ be an axis for a hyperbolic element with $\tau(g) > K +
O(\delta)$, and suppose there is an element $h$ in $E(g)$ which does
not $K$-stabilize $P$. Up to replacing $h$ with some $g^kh$, we can
assume $d_X(p_0,hp_0)\leq \frac12 d_X(p_0,gp_0)+O(\delta)$. As $h$
moves $p_0$ distance at least $K$, $h$ is hyperbolic by applying
Proposition \ref{prop:concat_hyperbolic}, in the case where $n=1$, $g
= g_1 = h$ and the basepoint $x_0 = p_0$. Therefore, there is a power
of $h$ such that
\[ \tfrac{1}{3} d_X(p_0, g p_0) - O(\delta) \le d_X(p_0, h^a p_0) \le
\tfrac{1}{2}d_X(p_0, g p_0) + O(\delta). \] 
As $\alpha_g$ and $h^a \alpha_g$ are $O(\delta)$-fellow travelers,
this gives a $(\tfrac{1}{3} \tau(g) - O(\delta) , O(\delta))$-match
for any subsegment of $\alpha_g$ of length $\tau(g)$.  Proposition
\myref{match axis} then implies that the probability that $\gamma_n$
has a $(\tfrac{1}{3} \tau(g) - O(\delta) , O(\delta))$-match tends to
one as $n$ tends to infinity, and the probability that this occurs
tends to zero as $n$ tends to infinity, by Proposition \myref{match
  gamma_n}.
\end{proof}

We now show that the probability that $w_n$ is $K$-asymmetric tends to one
as $n$ tends to infinity.

\begin{prop}\label{prop:k-asymmetric}
Let $G$ be a countable group acting acylindrically hyperbolically on
the separable space $X$, and let $\mu$ be an admissible probability
distribution on $G$.  Then for any constant $K \ge 0$ the probability
that $w_n$ is $K$-asymmetric tends to one as $n$ tends to infinity.
\end{prop}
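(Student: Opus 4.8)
The plan is to bound the probability of the complementary event, namely that there is a group element $g' \notin E(G)$ which $K$-stabilizes the segment $[p, w_n p]$, where $p$ is a closest point on the axis $\alpha_n$ to $x_0$ and $[p, w_n p]$ is the resulting fundamental domain of $\alpha_n$. First I would condition on the two events, each of asymptotic probability one, that $w_n$ is hyperbolic and that $\tau(w_n) \ge (1 - \e)\norm{\gamma_n}$, the latter by Lemma \ref{lem:tau_equal_progress}; in particular $[p, w_n p]$ then has length of order $n$. Throughout I would fix a weakly asymmetric element $h$ supplied by the admissibility of $\mu$, with axis $\alpha_h$, so that $E(h) = \< h \> \ltimes E(G)$ and hence, for every $u \in G$, every element of $E(u h u^{-1}) = u (\< h \> \ltimes E(G)) u^{-1}$ lying outside $E(G)$ is hyperbolic with axis $u \alpha_h$.

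The geometric heart of the argument is the following deterministic statement: suppose $[p, w_n p]$ contains a subsegment $\sigma$, of length at least the Bestvina--Fujiwara constant $D_h$ of Lemma \ref{lemma:stabilize}, which $K$-fellow travels a translate $u \alpha_h$, and suppose moreover that $[p, w_n p]$ does not $K$-fellow travel any single translate of $\alpha_h$ over a length linear in $n$; then any $g'$ which $K$-stabilizes $[p, w_n p]$ lies in $E(G)$. To see this, note that moving the endpoints of the geodesic $[p, w_n p]$ at most $K$ forces $g'$ to coarsely fix it pointwise, and in particular to coarsely stabilize a subsegment of $u \alpha_h$ of length at least $D_h$; by Lemma \ref{lemma:stabilize} this gives $g' \in E(u h u^{-1})$. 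If $g' \notin E(G)$, then by the choice of $h$ the element $g'$ is hyperbolic with axis $u \alpha_h$; but a hyperbolic isometry satisfying $d_X(y, g' y) \le K$ keeps $y$ within $K/2 + O(\delta)$ of its axis, so the whole of $[p, w_n p]$ would lie in a bounded neighbourhood of $u \alpha_h$, a fellow travelling of length linear in $n$, contrary to assumption. Hence $g' \in E(G)$ and $w_n$ is $K$-asymmetric.

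It then remains to check the two hypotheses of this statement with asymptotic probability one. For the single match $\sigma$, I would apply Proposition \myref{match fixed} to produce, for any fixed $L \ge D_h + O(\delta)$, a match of $\gamma_n^-$ with $\alpha_h$ of length $L$ with probability tending to one, and then transport it onto $\alpha_n$ using Proposition \myref{match axis}; since $\gamma_n^-$ omits the $\e \norm{\gamma_n}$-neighbourhoods of the endpoints of $\gamma_n$, the transported match lands inside $[p, w_n p]$. For the second hypothesis, I would use the periodicity of $\alpha_h$ under $h$: were $[p, w_n p]$ to fellow travel a single translate $u \alpha_h$ over a length linear in $n$, then, via Proposition \myref{match axis}, $\gamma_n$ would have a match with $\alpha_h$ of linear length, and cutting the matched subsegment into two disjoint halves and aligning them by a suitable power $h^k$ would produce a self-match of $\gamma_n$ of linear length and fellow-travelling constant $O(\delta)$; this has probability tending to zero by Proposition \myref{match gamma_n}.

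The step I expect to be the main obstacle is this last reduction, turning a long match with the fixed periodic geodesic $\alpha_h$ into a long self-match of $\gamma_n$: the power $h^k$ aligns the two halves only up to an error of $\tau(h) + O(\delta)$, and the interplay of this shift with the various fellow-travelling constants has to be arranged so that the resulting self-match genuinely satisfies the hypotheses of Proposition \myref{match gamma_n}. A secondary point requiring care is that the weakly asymmetric element $h$ must be taken in the support of $\mu$ in order for Proposition \myref{match fixed} to apply, which is what the admissibility hypothesis on $\mu$ ensures.
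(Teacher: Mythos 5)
Your argument is correct, but it is organized quite differently from the paper's, and the comparison is worth spelling out. The paper first proves Proposition \ref{prop:K-asymmetric}: starting from a weakly asymmetric $h$ it manufactures a single auxiliary element $h^a f^b h^a$ whose fundamental domain has coarse stabilizer \emph{exactly} $E(G)$, by forcing any such stabilizer into $E(u_1 h u_1^{-1}) \cap E(u_2 h u_2^{-1}) = E(G)$ for two distinct conjugates; the random axis then needs only one bounded-length match with this pre-fabricated element (Proposition \myref{match fixed}) to inherit $K$-asymmetry, with no further probabilistic input. You instead match directly with the weakly asymmetric $h$, which places the stabilizer $g'$ in a single conjugate $u(\< h \> \ltimes E(G))u^{-1}$, and you then exclude the elements $u h^k e u^{-1}$, $k \ne 0$, probabilistically: they are hyperbolic with quasi-axis $u\alpha_h$, so a small displacement of all of $[p, w_n p]$ would confine that segment to a bounded neighbourhood of the periodic geodesic $u\alpha_h$, and the $h$-periodicity converts this into a linear-length self-match of $\gamma_n$, excluded by Proposition \myref{match gamma_n}. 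This second mechanism is precisely the one the paper uses in Propositions \ref{prop:reversible} and \ref{prop:primitive} and at the end of Proposition \ref{prop:n-asymmetric}, so every estimate you invoke is available; the $\tau(h)$-shift and the various fellow-travelling constants are all independent of $n$, so they are absorbed as you anticipate (note also that a weakly asymmetric element is automatically irreversible, since $E(G)$ acts trivially on $\partial X$, so no reversal case arises). The paper's route buys a purely deterministic target element; yours trades the $h^a f^b h^a$ construction for one extra application of the no-self-match estimate. One caveat, which the paper's own proof shares: Definition \ref{admissible} only places a weakly asymmetric element in the \emph{subgroup generated by} the support of $\mu$, not in the support itself, so Proposition \myref{match fixed} must be applied to an element of the (semi)group generated by the support --- its proof, via \cite{mt}*{Proposition 5.4}, allows this, but your parenthetical claim that admissibility puts $h$ in the support is not literally what the definition says.
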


\begin{proof}
By Proposition \ref{prop:primitive} the probability that $w_n$ is
hyperbolic and $K$-primitive tends to one as $n$ tends to infinity. By
Proposition \ref{prop:K-asymmetric} there is an element $h$ in the support
of $\mu$ which is $( K + O(\delta) )$-asymmetric.  Let $\alpha_h$ be an
axis for $h$, and let $p$ be a closest point on $\alpha_h$ to the
basepoint $x_0$.  Then Proposition \myref{match fixed} implies that
the probability that $w_n$ is hyperbolic with axis $\alpha_n$, and
$\alpha_n$ has a subsegment of length at least $2 \tau(h)$ which
$O(\delta)$-fellow travels with a translate of $\alpha_h$ tends to one
as $n$ tends to infinity.  If this happens, then if an element $g \in
G$ $K$-stabilizes $[x_0, w_n x_0]$, then it also $(K + O(\delta)
)$-stabilizes a translate of $[p, h p]$.  As $h$ is $( K + O(\delta)
)$-asymmetric, this implies that $g \in E(G)$, so $w_n$ is $K$-asymmetric, as
required.
\end{proof}

This completes the proof of Proposition \ref{prop:k=1}: we have shown
that all of the geometric hypotheses of Proposition \ref{prop:weakly
  asymmetric} hold with asymptotic probability one, so Proposition
\ref{prop:weakly asymmetric} implies that $w_n$ is hyperbolic and
weakly asymmetric with asymptotic probability one.

Although we have completed the proof of the special case of Theorem
\ref{theorem:main} in the case $k=1$, we now conclude this section by
showing a slightly stronger result, which we will need for the general
case.

\begin{prop}\label{prop:n-asymmetric}
Let $G$ be a countable group acting acylindrically hyperbolically on
the separable space $X$, and let $\mu$ be an admissible probability
distribution on $G$ with positive drift $L > 0$.  Let $0 < \e <
\tfrac{1}{6}$.  Then the probability that $w_n$ is $(\e L
n)$-asymmetric tends to $1$ as $n$ tends to infinity.
\end{prop}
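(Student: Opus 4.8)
The plan is to mimic the proof of Proposition \ref{prop:k-asymmetric}, but tracking the linear dependence of all the geometric scales on $n$, so that a $K$-asymmetry statement with $K = \e L n$ can be deduced. Recall that Proposition \ref{prop:k-asymmetric} establishes $K$-asymmetry for fixed $K$ by combining two ingredients: first, that random walks give $K$-primitive elements (Proposition \ref{prop:primitive}), and second, that there is a fixed $(K+O(\delta))$-asymmetric element $h$ in the support of $\mu$ whose axis $\alpha_h$ is matched by the axis $\alpha_n$ of $w_n$ over a segment of length $\gtrsim \tau(h)$, via Proposition \myref{match fixed}; the $K$-asymmetry of $h$ then forces any $K$-stabilizer of $[x_0, w_n x_0]$ into $E(G)$. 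The issue for the present proposition is that $h$ is $(K+O(\delta))$-asymmetric only for the \emph{fixed} constant $K+O(\delta)$ that is built into it, so one cannot directly feed $K = \e L n$ into that element.

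The fix I would use is to not demand that $h$ stabilize merely one translate of its own bounded segment $[p, hp]$, but instead to extract a \emph{long} matched segment of $\alpha_n$ with $\alpha_h$, of length growing linearly in $n$, and chop it into many disjoint fundamental-domain-sized pieces. Concretely: fix once and for all an element $h$ in the support of $\mu$ which is weakly asymmetric (which exists by admissibility), and apply Proposition \ref{prop:K-asymmetric} to get that $h$ can be taken $(K_0 + O(\delta))$-asymmetric for a suitable fixed constant $K_0$; let $D_h$ be the associated Bestvina--Fujiwara constant from Lemma \ref{lemma:stabilize}. By Proposition \myref{match fixed} (with $\e' < 1/3$), with asymptotic probability one $w_n$ is hyperbolic with axis $\alpha_n$, and the middle portion $\gamma_n^-$ of $\gamma_n$ (after deleting $\e' \norm{\gamma_n}$-neighbourhoods of its endpoints) has an $(L_0, O(\delta))$-match with $\alpha_h$ for \emph{any} fixed $L_0$; but in fact, running the ergodic-theoretic argument in the proof of Proposition \myref{match fixed} more carefully, the density of matching positions is a fixed positive number $p$, so one obtains a translate $u\alpha_h$ that $O(\delta)$-fellow travels a subsegment of $\alpha_n$ of length at least $c_0 n$ for some fixed $c_0 > 0$, with asymptotic probability one. (Alternatively one uses Lemma \ref{lem:tau_equal_progress} together with the fact that, since $\alpha_n$ essentially agrees with $\gamma_n$ on its middle portion and $\gamma_n$ $K$-fellow travels $\gamma_\omega$ over $(1-\e')\norm{\gamma_n}$ of its length by Proposition \myref{match omega}, the ergodic frequency of $\alpha_h$-matches along $\gamma_\omega$ transfers to a linearly long matched arc on $\alpha_n$.)

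Now suppose $g \in G$ is an element which $(\e L n)$-stabilizes the geodesic $[x_0, w_n x_0]$. Since $\alpha_n$ and $[x_0,w_nx_0]$ are $O(\delta)$-fellow travelers (both being $(1,O(\delta))$-quasigeodesics between the same endpoints of $w_n$'s axis, up to endpoint effects of size $O(\delta)$), $g$ then $(\e L n + O(\delta))$-stabilizes the corresponding subsegment of $\alpha_n$, hence $(\e L n + O(\delta))$-stabilizes a subsegment $J$ of the translated axis $u\alpha_h$ of length $\ge c_0 n - O(\delta)$. Choose $\e < \tfrac16$ so that, for $n$ large, $\e L n + O(\delta) < c_0 n / 2$, say; then $gJ$ is a subsegment of $u\alpha_h$ within Hausdorff distance $\e L n + O(\delta)$ of $J$, and $|J| - (\e L n + O(\delta)) \ge c_0 n/2 - O(\delta) \to \infty$. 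In particular $g$ $O(\delta)$-stabilizes a subsegment of $u\alpha_h$ of length $\ge D_h + O(\delta)$, wait --- more precisely $g$ must $(K_0+O(\delta))$-coarsely stabilize a subsegment of $u\alpha_h$ of length $\ge D_h$ (take $n$ large enough that $c_0 n/2 - O(\delta) \ge D_h$), and additionally, shifting by an appropriate power of $u h u^{-1}$, one can arrange $g$ to $(K_0+O(\delta))$-coarsely fix a translate of $[up, u h p]$ --- the fundamental segment with respect to which $uhu^{-1}$ is $(K_0+O(\delta))$-asymmetric. By $(K_0+O(\delta))$-asymmetry of $uhu^{-1}$, this forces $(uh^ju^{-1})g \in E(G)$ for some $j$, hence $g \in u\langle h\rangle u^{-1} E(G) = u(\langle h \rangle \ltimes E(G))u^{-1} = uE(h)u^{-1}$ (using $h$ weakly asymmetric), and then $K$-primitivity of $w_n$ (Proposition \ref{prop:primitive}, with $K = \e L n$, whose proof equally gives linearly-large primitivity constants) together with the now-standard argument shows $g\in E(G)$. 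I expect the main obstacle to be the bookkeeping in the first step: extracting a \emph{linearly} long matched arc between $\alpha_n$ and a single translate of $\alpha_h$ (rather than many short ones at density $p$) with asymptotic probability one, which requires either re-examining the ergodic argument in Proposition \myref{match fixed} to see that matched positions occur in a block of positive density --- and hence some translate yields a long consecutive match --- or combining the frequency statement with a pigeonhole over the $O(n)$ candidate translates $w_m \gamma_h$ to find one that persists for linearly many consecutive steps. Once that arc is in hand, the rest is a routine adaptation of Propositions \ref{prop:primitive}--\ref{prop:k-asymmetric} with $K$ replaced by $\e L n$ throughout, checking at each use of Proposition \myref{match gamma_n} and Proposition \myref{match axis} that the relevant match lengths are still linear in $n$ (they are, since $\tau(w_n) \sim Ln$ by Lemma \ref{lem:tau_equal_progress}).
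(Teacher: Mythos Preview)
Your approach has a genuine gap at precisely the step you flag as the ``main obstacle'': you require a single translate $u\alpha_h$ that $O(\delta)$-fellow travels $\alpha_n$ along a segment of length $\ge c_0 n$ linear in $n$. This is not bookkeeping --- it is false. The paper itself remarks (just after Proposition \ref{prop:match}, citing \cite{sisto-taylor}) that the largest match between $\gamma_n$ and the axis of a fixed element has only logarithmic size in $n$. You can also see this directly: an axis is $\tau(h)$-periodic, so a match of length $c_0 n$ between $\gamma_n$ and $u\alpha_h$ would, via the isometry $uhu^{-1}$, give $\gamma_n$ a $(c_0 n - \tau(h), O(\delta))$-self-match, which Proposition \myref{match gamma_n} rules out with asymptotic probability one. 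Your suggested repairs do not help either: the ergodic argument produces positive density of matching \emph{positions} $m$, but the relevant translate at position $m$ is $w_m \alpha_h$ and varies with $m$; pigeonholing $\sim p n$ matches among $O(n)$ candidate translates yields only $O(1)$ repetitions of any single translate, not a linearly long consecutive match.

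The paper's proof avoids long matches with $\alpha_h$ entirely, via a dichotomy on how far $g$ moves interior points. One first proves a Claim: on the middle portion $\gamma^- = [q, w_n q] \setminus \bigl(B(q, 2\e L n) \cup B(w_n q, 2\e L n)\bigr)$, any $g$ that $(\e L n)$-stabilizes $[q, w_n q]$ moves every point of $\gamma^-$ by the \emph{same} distance $d$, up to $O(\delta)$. If $d \le O(\delta)$, then a single \emph{short} match of length $2\tau(h)$ with $\alpha_h$ inside $\gamma^-$ --- which is all Proposition \myref{match fixed} actually provides --- already suffices: $g$ then $(K+O(\delta))$-stabilizes a translate of $[p, hp]$, and the fixed-constant asymmetry of $h$ forces $g \in E(G)$. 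If instead $d \ge O(\delta)$, one argues directly (via Proposition \ref{prop:schottky} applied at a point of $\gamma^-$) that $g$ is hyperbolic with axis $O(\delta)$-fellow traveling $\gamma^-$ and $\tau(g) \le \e L n$; the $\tau(g)$-periodicity of that axis then forces $\gamma_n$ to have an $(\e L n + O(\delta), O(\delta))$-self-match, contradicting Proposition \myref{match gamma_n}. This second branch --- turning a large-displacement stabilizer into a forbidden self-match of $\gamma_n$ rather than trying to pin it to $\alpha_h$ --- is the idea missing from your argument, and it is what lets the proof get by with only the bounded-length match that is genuinely available.
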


\begin{proof}
Let $h$ be a hyperbolic element in the support of $\mu$ which is $K =
O(\delta)$-asymmetric, with axis $\alpha_h$, and let $p$ be a closest
point on $\alpha_h$ to the basepoint $x_0$.

The probability that $w_n$ is hyperbolic tends to one, so we may
assume that $w_n$ is hyperbolic with axis $\alpha_n$.  Let $q$ be a
closest point on $\alpha_n$ to $x_0$, let $\gamma$ be a geodesic from
$q$ to $w_n q$, and let $g$ be a group element which $( \e L n
)$-coarsely stabilizes $\gamma$.  We have already shown the result for
group elements $g$ which $K$-stabilize $\gamma$ for fixed $K$, so we
may assume that $d_X(q, g q)$ and $d_X( w_n q , g w_n q )$ are both at
least $K = O(\delta)$.

We now show that there is a subgeodesic $\gamma^-$ of $\gamma$ for
which all points are moved a similar distance by $g$. Define
$\gamma^-$ to be $\gamma \setminus \left( B_X(q, 2 \e L n) \cup B_X(w_n q,
2 \e L n) \right)$.

\begin{claim}
For all $s$ and $t$ in $\gamma^-$,
\[ \norm{ d_X( s, g s ) - d_X( t, g t ) } \le O(\delta). \]
\end{claim}

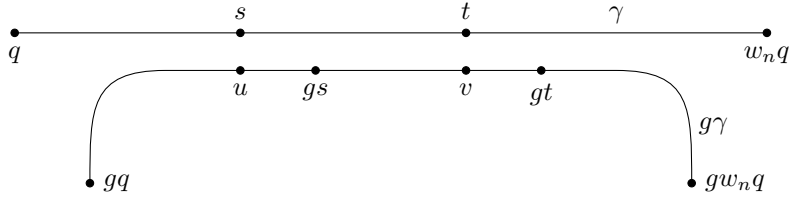
\begin{figure}[H]
\begin{center}
\begin{tikzpicture}

\tikzstyle{point}=[circle, draw, fill=black, inner sep=1pt]

\draw (0, 0) node [point, label=below:$q$] {} -- 
      (10, 0) node [point, label=below:$w_n q$] {} 
              node [pos=0.8, above] {$\gamma$};

\draw (1, -2) node [point, label=right:$g q$] {}
              .. controls (1, -1) and (1, -0.5) .. 
      (2, -0.5) -- 
      (8, -0.5) .. controls (9, -0.5) and (9, -1) .. 
      (9, -2) node [point, label=right:$g w_n q$] {} 
              node [pos=0.7, right] {$g \gamma$};

\draw (3, 0) node [point, label=above:$s$] {};
\draw (3, -0.5) node [point, label=below:$u$] {};
\draw (4, -0.5) node [point, label=below:$g s$] {};

\draw (6, 0) node [point, label=above:$t$] {};
\draw (6, -0.5) node [point, label=below:$v$] {};
\draw (7, -0.5) node [point, label=below:$g t$] {};

\end{tikzpicture}
\end{center} 
\caption{Points on $\gamma^-$ are moved a similar distance.}
\label{pic:offset}
\end{figure}

\begin{proof}
As $g$ is an isometry $d_X( s, t ) = d_X( g s, g t )$.  Let $u$ be a
closest point on $g \gamma$ to $s$, and let $v$ be a closest point on
$g \gamma$ to $t$, then $d_X(u, v) = d_X(s, t) + O(\delta)$. This
implies that $d_X(u, g s ) = d_X(v, g t ) + O(\delta)$, and as $d_X(u,
s ) \le 2 \delta$ and $d_X(v, t ) \le 2 \delta$, thus implies that
$d_X( s, g s ) = d_X( t, g t )+O(\delta)$, as required.
\end{proof}

By Propositions \myref{exp drift} and \myref{match omega} the length
of $\gamma$ is at least $(1 - \e) L n$, and so the length of
$\gamma^-$ is at least $(1 - 3 \e) L n$. Therefore by Proposition
\myref{match fixed} the probability that $\gamma^-$ has a subsegment
of length at least $2 \tau(h)$ which $O(\delta)$-fellow travels with
$\gamma_h$ tends to $1$ as $n$ tends to infinity.  If $d_X( s, g s )
\le K = O(\delta)$ for $s \in \gamma^-$, then $g$ $( K + O(\delta)
)$-stabilizes a translate of $[p, h p]$, and so $g \in E(G)$, which
implies that $w_n$ is $K$-asymmetric, as required.  Therefore the final
step is to eliminate the case in which $d_X(s, g s ) \ge K =
O(\delta)$ for $s \in \gamma^-$, which we now consider.

Let $s$ be a point on $\gamma^-$, let $t$ be a nearest point to $g s$
on $\gamma$, and let $u$ be a nearest point on $\gamma$ to $g t$. This
is illustrated below in Figure \ref{pic:points}.

\begin{figure}[H]
\begin{center}
\begin{tikzpicture}

\tikzstyle{point}=[circle, draw, fill=black, inner sep=1pt]

\draw (0, 0) node [point, label=above:$q$] {} -- 
      (10, 0) node [point, label=below right:$w_n q$] {}
              node [pos=0.9, above] {$\gamma$};

\draw (1, -2) node [point, label=left:$g p$] {} 
              .. controls (1, -1) and (1, -0.5) .. 
      (2, -0.5) -- 
      (8, -0.5) .. controls (9, -0.5) and (9, -1) .. 
      (9, -2) node [point, label=right:$g w_n q$] {}
              node [pos=0.6, right] {$g \gamma$};

\draw (3, 0) node [point, label=above:$s$] {};
\draw (5, 0) node [point, label=above:$t$] {};

\draw (5, -0.5) node [point, label=below:$g s$] {};

\draw (7, 0) node [point, label=above:$u$] {};
\draw (7, -0.5) node [point, label=below:$g t$] {};
\draw (7.25, -1.25) node [point, label=below:$g^2 s$] {};

\end{tikzpicture}
\end{center} 
\caption{The image of $s$ under $g$ and $g^2$.}
\label{pic:points}
\end{figure}
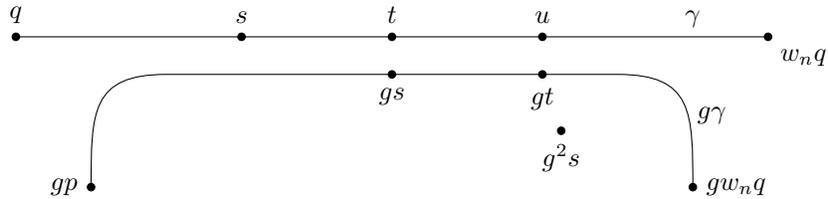

The distance from $g s$ to $t$ is at most $2 \delta$, and the distance
from $g^2 s$ to $u$ is at most $4 \delta$. As $d_X(s, g x) \ge K
=O(\delta)$, this gives an upper bound on the Gromov product $\gp{g
  s}{s}{g^2 s} = \gp{s}{g^{-1} s}{g s}$ of at most $O(\delta)$, and so
we may apply Proposition \ref{prop:schottky}. Therefore, $g$ is
hyperbolic, and the axis $\alpha_g$ for $g$ passes within distance
$O(\delta)$ of $t$. Furthermore, this holds for all $t \in \gamma^-$,
so the axis $\alpha_g$ for $g$ $O(\delta)$-fellow travels with
$\gamma^-$.  The axis $\alpha_g$ is $\tau(g)$ periodic, and $\tau(g)
\le \e L n$, this means that $\gamma^-$, and hence $\gamma_n$ has an
$( \e L n + O(\delta), O(\delta))$-match, which contradicts
Proposition \myref{match gamma_n}.
\end{proof}

\section{General case: many generators} \label{section:many}

We briefly recall the notation we use for a random subgroup $H =
H(\mu_i, n_i)$. The $\mu_1, \ldots, \mu_k$ are admissible probability
distributions on $G$, and the $n_1, \ldots, n_k$ are positive
integers.  We write $w_{i, n_i}$ for a random walk of length $n_i$
generated by the probability distribution $\mu_i$, and $\gamma_i$ for
a geodesic in $X$ from $x_0$ to $w_{i, n_i} x_0$. We shall write $H$
for the subgroup generated by $\{ w_{1, n_1}, \ldots, w_{k, n_k} \}$,
and set $n = \min n_i$.  Recall that the random walk generated by an
admissible probability distribution $\mu_i$ has positive drift,
i.e. there is a constant $L_i$ such that $\tfrac{1}{n} d_X(x_0, w_{i,
  n_i} x_0) \to L_i$ as $n_i \to \infty$, almost surely. We shall set
$L = \min L_i$, so in particular $L > 0$, and we shall reorder the
$\mu_i$ so that $L n \le L_1 n_1 \le \cdots \le L_k n_k$, as we shall
need to keep track of the expected lengths of the generators in the
subsequent argument.  Finally, it will be convenient to have notation
for paths which travel along a geodesic $\gamma_i$ in the reverse
direction, so we will extend our index set from $I = \{ 1, \ldots,
k\}$ to $\pm I = \{ \pm 1, \ldots, \pm k \}$, and write $\gamma_{-i}$
for a geodesic in $X$ from $x_0$ to $w_{i, n_i}^{-1} x_0$, which is a
translate by $w_{i, n_i}^{-1}$ of the reverse path along $\gamma_i$.

In order to show that $H$ is hyperbolically embedded in $G$ we shall
show that $H$ is freely generated by $\{ w_{1, n_1}, \ldots w_{k, n_k}
\}$, $H$ is quasi-isometrically embedded in $X$, and $H \ltimes E(G)$
is geometrically separated, with asymptotic probability one.

We start by showing some generalizations of the properties that hold
for individual random walks to the case of multiple random walks.
Each individual random walk makes linear progress with exponential
decay. We now show that the collection of $k$ random walks also makes
linear progress with exponential decay.

\begin{definition} \label{property:linear} %
Given $0 < \e < 1$, and a random subgroup $H$, we say that $H$
satisfies \emph{$\e$-length bounds} if
\begin{equation} \label{eq:linear}
(1 - \e) L_i n_i \le d_X(x_0, w_{i, n_i} x_0) \le (1 + \e) L_i n_i. 
\end{equation}
for all $1 \le i \le k$.
\end{definition}

\begin{prop} \label{prop:length bounds} %
Let $H$ be a random subgroup, and let $\e > 0$. Then there are
constants $K$ and $c$, depending only on $\e$, and the probability
distributions $\mu_i$, such that the probability that a random
subgroup $H$ satisfies $\e$-length bounds is at least $1 - K c^{n}$.
\end{prop}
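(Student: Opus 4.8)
The plan is to reduce the statement to the single-walk positive-drift estimate, Proposition \myref{exp drift}, applied to each $\mu_i$ separately, and then to combine the $k$ resulting bounds by a union bound. First, fix $\e > 0$. For each $i \in \{ 1, \ldots, k \}$, Proposition \myref{exp drift} applied to the admissible measure $\mu_i$ furnishes a drift constant $L_i > 0$ together with constants $K_i > 0$ and $c_i < 1$, depending only on $\mu_i$ (and the fixed basepoint $x_0$) and on $\e$, such that the $i$-th length bound in \eqref{eq:linear} holds with probability at least $1 - K_i c_i^{n_i}$; equivalently, it fails with probability at most $K_i c_i^{n_i}$. These are exactly the drift constants $L_i$ appearing in the definition of $\e$-length bounds, so no reconciliation of drift rates is needed.

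Next I would take a union bound over the $k$ events $\{$the $i$-th bound fails$\}$. Independence of the walks is not even required here, only Boole's inequality: the probability that $H$ fails to satisfy $\e$-length bounds is at most $\sum_{i=1}^{k} K_i c_i^{n_i}$. To package this as a single exponential in $n = \min_i n_i$, set $c = \max_i c_i$, which is still strictly less than $1$ since it is a maximum of finitely many numbers below $1$, and set $K = \sum_{i=1}^{k} K_i$. Because $c_i \le c < 1$ and $n_i \ge n$ for every $i$, we have $c_i^{n_i} \le c^{n_i} \le c^{n}$, and hence the failure probability is at most $K c^{n}$. This gives the claimed lower bound $1 - K c^{n}$ on the probability that $H$ satisfies $\e$-length bounds.

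The constants $K$ and $c$ depend only on $\e$ and on the collection $\mu_1, \ldots, \mu_k$ (and thus implicitly on $k$, which is permitted by the conventions of Section \ref{section:notation}), exactly as the statement requires. I do not expect any genuine obstacle: the single point that must be checked is that collapsing the individual exponents $n_i$ to the common exponent $n = \min_i n_i$ preserves exponential decay, and this is immediate from $c < 1$, which forces $c^{n_i} \le c^{n}$. The proposition is therefore a direct consequence of the single-walk estimate of Proposition \myref{exp drift} together with a finite union bound.
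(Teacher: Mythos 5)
Your proposal is correct and is essentially the same argument as the paper's: apply Proposition \myref{exp drift} to each $\mu_i$, take a union bound, and absorb the $k$ individual bounds into a single $K c^{n}$ with $c = \max_i c_i$ and $n = \min_i n_i$ (the paper takes $K = k\max_i K_i$ rather than $\sum_i K_i$, which is an immaterial difference). No gaps.
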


\begin{proof}
By Proposition \myref{exp drift}, for any $\e > 0$, for each $\mu_i$
there are constants $L_i, K_i$ and $c_i$ such that
\[ \P \left( (1 - \e) L_i n_i \le d_X(x_0, w_{i, n_i} x_0) \le (1 +
\e) L_i n_i \right) \ge 1 - K_i c_i^{n_i}. \]
If $K' = \max K_i$, $c = \max c_i$ and $n = \min n_i$, then the
probability that these inequalities are satisfied simultaneously for
all $i$ is at least $1 - k K c^{n}$. Therefore the required estimate
holds, with $K = k K'$, and the previous choice of $c$.
\end{proof}

We now show that the collection of $k$ random walks satisfies the
following estimates on their mutual Gromov products.

\begin{definition} \label{property:gp} %
We say a random subgroup $H$ satisfies \emph{$K$-Gromov product
  bounds} if
\[ \gp{x_0}{a x_0}{b x_0} \le K. \]
for all distinct $a$ and $b$ in the symmetric generating set $A = \{
w_{1, n_1}^{\pm 1}, \ldots w_{k, n_k}^{\pm 1} \}$ for $H$.
\end{definition}

\begin{prop} \label{prop:gromov bounds}
Let $H$ be a random subgroup. Given $0 < \e < \tfrac{1}{2}$ there are
constants $K$ and $c$, depending only on $\e$, and the probability
distributions $\mu_i$, such that the probability that $H$ satisfies
$(\e L n)$-Gromov product bounds is at least $1 - K c^n$.
\end{prop}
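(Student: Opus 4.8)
The plan is to bound each of the $O(k^2)$ pairwise Gromov products $\gp{x_0}{a x_0}{b x_0}$, for distinct $a,b$ in the symmetric generating set $A = \{ w_{1,n_1}^{\pm1}, \ldots, w_{k,n_k}^{\pm1} \}$, with individually exponentially small failure probability, and then finish by a union bound, since a sum of $O(k^2)$ terms of the form $Kc^n$ is again of that form. Two preliminary observations drive everything. First, $w_{i,n_i}^{-1}$ has the law of a $\rmu_i$-random walk of length $n_i$, which is again admissible since $\rmu_i$ is admissible iff $\mu_i$ is; so each element of $A$ is the endpoint of an admissible walk. Second, for $i \ne j$ the pair $w_{i,n_i}^{\pm1}$, $w_{j,n_j}^{\pm1}$ consists of two \emph{independent} walks (disjoint increments), whereas $w_{i,n_i}$ and $w_{i,n_i}^{-1}$ are not independent; this dichotomy is what splits the argument into an easy and a hard case.

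For the independent pairs ($a = w_{i,n_i}^{\pm1}$, $b = w_{j,n_j}^{\pm1}$ with $i \ne j$) I would convert the Gromov product bound into a shadow event and apply Proposition \myref{exp shadows}. Indeed $\gp{x_0}{a x_0}{b x_0} \ge \e L n$ is exactly the event $a x_0 \in S_{x_0}(b x_0, d_X(x_0, b x_0) - \e L n)$. Conditioning on $b$ (independent of $a$) and applying the shadow decay estimate to $a$, whose law is an admissible walk, bounds the conditional probability by $K c^{\e L n}$ uniformly in the value of $b$, provided the radius $d_X(x_0, b x_0) - \e L n$ is at least $R_0$. The latter fails only on an exponentially small event: by the length bounds (Proposition \ref{prop:length bounds}, via Proposition \myref{exp drift}) we have $d_X(x_0, b x_0) \ge (1-\e) L n$ with probability $1 - Kc^n$, and $(1 - 2\e) L n \ge R_0$ for $n$ large since $\e < \tfrac12$. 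This yields a $1 - Kc^n$ bound for each such pair.

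The genuinely different case is the same-walk pair $a = w_{i,n_i}$, $b = w_{i,n_i}^{-1}$, where the conditioning above is unavailable. Here I would split $w_{i,n_i} = a_1 a_2$ at its midpoint $m = \lfloor n_i/2 \rfloor$, so that $a_1$ is a $\mu_i$-walk and $a_2^{-1}$ is an \emph{independent} $\rmu_i$-walk, with $w_{i,n_i} x_0 = a_1 a_2 x_0$ and $w_{i,n_i}^{-1} x_0 = a_2^{-1} a_1^{-1} x_0$. Two applications of the intermediate-point estimate of Proposition \myref{exp gromov} (to the walk $w_{i,n_i}$ at time $m$, giving a bound on $\gp{a_1 x_0}{x_0}{w_{i,n_i} x_0}$, and to the $\rmu_i$-walk $w_{i,n_i}^{-1}$ at time $n_i - m$, giving a bound on $\gp{a_2^{-1} x_0}{x_0}{w_{i,n_i}^{-1} x_0}$) show that, off an exponentially small event, $a_1 x_0$ lies close to and far along the geodesic $[x_0, w_{i,n_i} x_0]$, and $a_2^{-1} x_0$ lies close to and far along $[x_0, w_{i,n_i}^{-1} x_0]$. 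A standard thin-triangle comparison then identifies $\gp{x_0}{w_{i,n_i}x_0}{w_{i,n_i}^{-1}x_0}$ with $\gp{x_0}{a_1 x_0}{a_2^{-1}x_0}$ up to $O(\delta)$ plus these two small projection errors, and the latter Gromov product is handled exactly as an independent pair above, since $a_1$ and $a_2^{-1}$ are independent admissible walks. Taking all auxiliary thresholds to be small fixed multiples of $\e L n$ produces the bound $\gp{x_0}{w_{i,n_i}x_0}{w_{i,n_i}^{-1}x_0} \le \e L n$ off an event of probability $Kc^n$.

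I expect the main obstacle to be precisely this last case: decoupling the forward walk $w_{i,n_i}$ from its own inverse forces the midpoint splitting and the somewhat delicate thin-triangle bookkeeping needed to pass from the half-walk endpoints $a_1 x_0$, $a_2^{-1} x_0$ back to the full endpoints while losing only $O(\delta)$ in the Gromov product. The independent pairs, by contrast, reduce almost immediately to the shadow estimate, so the real content is ensuring the same-index pair is included and handled; note that it cannot be avoided, since in a reduced word a letter may be immediately followed by itself, producing the adjacent Gromov product $\gp{x_0}{w_{i,n_i}^{-1}x_0}{w_{i,n_i}x_0}$ in the hypotheses of Proposition \ref{prop:concat_hyperbolic}.
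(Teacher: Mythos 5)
Your proposal is correct, and for the pairs coming from two different walks it is exactly the paper's argument: rewrite $\gp{x_0}{a x_0}{b x_0}\ge \e L n$ as the shadow event $a x_0\in S_{x_0}(b x_0, d_X(x_0,b x_0)-\e L n)$, apply Proposition \myref{exp shadows} with $g=b$, use Proposition \myref{exp drift} to guarantee the radius exceeds $R_0$ off an exponentially small event (here $\e<\tfrac12$ enters), and finish with a union bound over the $O(k^2)$ pairs. Where you genuinely diverge is the same-index pair $a=w_{i,n_i}$, $b=w_{i,n_i}^{-1}$: the paper's proof treats this pair identically to the others, invoking the shadow estimate with $g=b$ without comment, even though the conditioning that justifies plugging a random $g$ into that estimate is only available when $a$ and $b$ are independent. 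Your midpoint splitting $w_{i,n_i}=a_1a_2$, with two applications of Proposition \myref{exp gromov} (one for $\mu_i$, one for $\rmu_i$) and the standard hyperbolicity inequality $\gp{x}{y}{z}\ge\min\bigl(\gp{x}{y}{u},\gp{x}{u}{z}\bigr)-\delta$ applied twice, is a correct way to reduce this pair to an independent one while preserving the exponential rate; it is the same device the paper itself uses in the proof of Lemma \ref{lem:tau_equal_progress}, just not in the proof of this proposition. (An alternative closer to the paper's toolkit: the identity $\gp{x_0}{w x_0}{w^{-1}x_0}=d_X(x_0,w x_0)-\tfrac12 d_X(x_0,w^2x_0)\le d_X(x_0,w x_0)-\tau(w)$ reduces this pair to the translation-length estimate.) So your write-up buys an explicit treatment of a case the paper handles only implicitly, at the cost of the extra thin-triangle bookkeeping you anticipate; you are also right that this pair cannot be discarded, since it is exactly the Gromov product appearing when a letter repeats in a reduced word.
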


\begin{proof}
If $\gp{x_0}{a x_0}{b x_0} \le \e L n$, then, by definition of
shadows, $a x_0 \in S_{x_0}(b x_0, d_X(x_0, b x_0) - \e L n)$.  By
Proposition \myref{exp shadows}, the random walk determined by each
$\mu_i$ satisfies exponential decay for shadows, i.e. there are
constants $R_0, K_i$ and $c_i < 1$ such that for all $R \ge R_0$, and
all $g \in G$,
\begin{equation} \label{eq:gp est}
\P \left( w_{i, n_i} \in S_{x_0}( g x_0, R) \right) \le K_i
c_i^{d_X(x_0, g x_0) - R}. 
\end{equation}
We shall use \eqref{eq:gp est} with $g = b$.  If 
\begin{equation} \label{eq:R_0} %
d_X(x_0, b x_0) - \e L n \ge R_0,
\end{equation}
then \eqref{eq:gp est} implies that the probability that
$\gp{x_0}{a x_0}{b x_0} \le \e L n$ is at most $K_i c_i^{\e L n}$.

In order to apply the estimate \eqref{eq:gp est}, we need to check
that \eqref{eq:R_0} holds with asymptotic probability one.  Using
linear progress, Proposition \myref{exp drift},
\[ \P( d_X( x_0, b x_0) \le (1 - \e) L n ) \le K'_i {c'_i}^{n}, \]
for some constants $K'_i$ and $c'_i$ depending on $\e$ and
$\mu_i$. Therefore
\[ \P( d_X( x_0, b x_0) - \e L n \le (1 - 2 \e ) L n ) \le K'_i
{c'_i}^{n}. \]
As we have chosen $\e < \tfrac{1}{2}$, this implies that
\[ \P( d_X( x_0, b x_0) - \e L n \le R_0 ) \le K'_i
{c'_i}^{n}. \]
for all $n \ge R_0 / ( L (1 - 2 \e) )$.

Therefore, the probability that $\gp{x_0}{ a x_0 }{ b x_0 } \le \e L
n$ is as at most $ K_i' {c'_i}^{n} + K_i c_i^{\e L n}$. As there are
at most $2k$ choices for each of $a$ and $b$ in $A$, the probability
that any of these events occurs is at most $4k^2 K'' c^n$, where $K''
= \max \{ K_i, K'_i \}$ and $c_i = \max \{ c_i, c'_i \}$. The result
then holds with $K = 4k^2 K''$, and the previous choice of $c$, as
required.
\end{proof}

If $H$ satisfies $\e$-length bounds and $(\e L n)$-Gromov product
bounds, then the conditions \eqref{eq:gp conditions} are satisfied in
Proposition \ref{prop:schottky}, so the rescaled Cayley graph
$\Gamma_H$ is $(6, O(\e L n))$-quasi-isometrically embedded in $X$. In
particular, this implies that $H$ is freely generated by $\{ w_{1,
  n_1}, \ldots w_{k, n_k}\}$, and $H E(G)$ is a semidirect product $H
\ltimes E(G)$.  As well as these properties, it will be convenient to
know certain matching properties for the geodesics defined by $H$,
which we now describe.

\begin{definition}
We say that a random subgroup $H$ has an \emph{$\e$-large match} if a
translate of $[\gamma_j(\e L n), \gamma_j(\norm{\gamma_j} - \e L n)]$
is contained in a $2 \delta$-neighbourood of $\gamma_i$, for some $i <
j$.
\end{definition}

\begin{prop} \label{prop:large match}
Let $H$ be a random subgroup, and let $0 < \e < \tfrac{1}{3}$.  Then
there are constants $K$ and $c$, depending on $\e$ and the probability
distributions $\mu_i$, such that the probability that $H$ has an
$\e$-large match is at most $K c^{n}$.
\end{prop}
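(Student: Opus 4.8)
The plan is to bound, for each ordered pair $i<j$ separately, the probability of the event $M_{ij}$ that a translate of $\eta_j := [\gamma_j(\e L n), \gamma_j(\norm{\gamma_j} - \e L n)]$ lies in a $2\delta$-neighbourhood of $\gamma_i$, and then union over the at most $\binom{k}{2}$ such pairs. Throughout I work on the event that the $\e$-length bounds hold, whose complement has probability at most $K c^n$ by Proposition \ref{prop:length bounds}; on this event $\norm{\eta_j} = \norm{\gamma_j} - 2\e L n \ge (1-\e)L_j n_j - 2\e L n \ge (1-3\e)L n > 0$ since $\e < \tfrac13$, so $\eta_j$ is well defined and has length linear in $n$. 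I will also use the admissibility hypothesis (Definition \ref{admissible}): since the supports of the finitely many $\mu_i$ have bounded image in $X$, there is a constant $D$ with $d_X(x_0, s x_0) \le D$ for every step $s$, whence $\norm{\gamma_i} \le D n_i$ deterministically.

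The core step is to fix $i<j$, condition on the walk $w_{j, n_j}$ (which makes $\eta_j$ a fixed geodesic segment and, by independence, leaves $w_{i,n_i}$ $\mu_i$-distributed), and apply the single-walk matching estimate Proposition \myref{match finite} to $\gamma_i$. First I reduce $M_{ij}$ to a statement about a fixed-length subsegment of $\gamma_i$: if a geodesic $g\eta_j$ is contained in $N_{2\delta}(\gamma_i)$, then projecting its endpoints to $\gamma_i$ shows, by thin triangles and nearest-point projection, that $g\eta_j$ lies in the $(2\delta + O(\delta))$-neighbourhood of a subsegment $[\gamma_i(t), \gamma_i(t + \norm{\eta_j})]$ for some integer $t \ge 0$. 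Thus Proposition \myref{match finite}, applied with $K = \max(2\delta + O(\delta), K_0)$, bounds the probability of a match at each fixed $t$ by $B c^{\norm{\eta_j}}$, and I sum over the admissible values of $t$.

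The main obstacle, and the only genuinely delicate point, is that the number of admissible positions $t$ is at most $D n_i + 1$, and $n_i$ need not be comparable to $n = \min n_i$, so a naive union bound produces a factor that is merely linear in $n_i$ rather than in $n$. The resolution is that the per-position weight $c^{\norm{\eta_j}}$ shrinks precisely as the walks grow: on length bounds $\norm{\eta_j} \ge (1-\e)L_j n_j - 2\e L n$, and the ordering $L_j n_j \ge L_i n_i$ lets me rewrite the exponent so that
\[ \P\left(M_{ij} \mid \gamma_j\right) \le (D n_i + 1)\, B\, c^{\,\norm{\eta_j}} \le (D n_i + 1)\, B\, c^{-2\e L n}\, c^{(1-\e)L_i n_i}. \]
The function $m \mapsto (D m + 1)\,c^{(1-\e)L_i m}$ is eventually decreasing, so for $n$ large its value at $m = n_i \ge n$ is at most its value at $m = n$, which in turn is at most $(D n + 1)\,c^{(1-\e)L n}$ because $L_i \ge L$ and $c<1$. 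This gives the uniform bound $\P(M_{ij}\mid\gamma_j) \le (D n + 1)\,B\,c^{(1-3\e)L n}$; informally, the extra length of a long $\gamma_j$ forces its middle $\eta_j$ to be so long that no shorter $\gamma_i$ can shadow it, which defeats the extra positions.

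Finally I take expectation over $\gamma_j$, add back the $K c^n$ coming from the failure of the length bounds, and sum over the $\binom{k}{2}$ pairs. Since $(D n + 1)\,c^{(1-3\e)L n}$ is a linear factor times a term decaying exponentially in $n$, it is at most $K'(c')^n$ for a suitable $c'<1$, and absorbing the small-$n$ behaviour into the constant $K$ yields a total bound of the claimed form $K c^n$, with $K$ and $c<1$ depending only on $\e$ and the $\mu_i$.
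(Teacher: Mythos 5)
Your proof is correct and follows essentially the same route as the paper's: restrict to the event that the $\e$-length bounds hold, apply the single-segment matching estimate of Proposition \myref{match finite} to $\gamma_i$ with $\eta=\eta_j$ (conditioning on $w_{j,n_j}$, which the paper leaves implicit), union over a discretized set of starting positions along $\gamma_i$, and beat the linearly many positions with the exponential decay in $\norm{\eta_j}\ge(1-\e)L_jn_j-2\e Ln$, using the ordering $L_in_i\le L_jn_j$ and the eventual monotonicity of $m\mapsto m\,c^{am}$ to reduce everything to the minimum length $n$. The only cosmetic difference is that you count positions by the deterministic bound $Dn_i+1$ coming from the bounded support, whereas the paper counts them by the slack $(1+\e)L_in_i-\norm{\gamma_j^-}=O(\e L_jn_j)$; both yield the same conclusion.
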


\begin{proof}
We may assume that $H$ satisfies $\e$-length bounds, which by
Proposition \ref{prop:length bounds}, happens with probability at
least $1 - K' {c'}^{n}$, for some $K'$ and $c' < 1$, depending on the
$\mu_i$ and $\e$. By $\e$-length bounds, the length of $\gamma_j$ is
at least $(1 - \e) L_j n_j$, and the length of $\gamma_i$ is at most
$(1 + \e) L_i n_i$.  

Let $\gamma_j^-$ be the subgeodesic of $\gamma_j$ given by $[
\gamma_j(\e L n), \gamma_j(\norm{\gamma_j} - \e L n) ]$.  It will be
convenient to consider a discrete set of points $\gamma_j(\ell)$ along
$\gamma_j$, where $\ell \in \N$.  If $\gamma_j^-$ is contained in a $2
\delta$-neighbourhood of $[ \gamma_i(t), \gamma_i(t +
\norm{\gamma_j^-}) ]$, then $\gamma_j^-$ is contained in a $(2 \delta
+ 1)$-neighbourhood of $[ \gamma_i(\ell), \gamma_i(\ell +
\norm{\gamma_j^-}) ]$ for some $\ell \in \N$.

By Proposition \myref{match finite}, there are constants $K_i$ and
$c_i < 1$ such that the probability that a translate of $\gamma_j^-$
is contained in a $(2 \delta + 1)$-neighbourhood of $\gamma_i$
starting at $\gamma_i( \ell )$ is at most
\[ K_i c_i^{(1 - \e) L_j n_j - 2 \e L n} \le K c^{(1 - 3\e) L_j
  n_j}, \]
where the inequality above holds with $K = \max K_i$, $c = \max c_i$,
and $L n \le L_j n_j$.  Given the length estimates for $\gamma_i$ and
$\gamma_j$, the number of possible values of $\ell$ is at most
\[ (1 + \e) L_i n_i - (1 - \e) L_j n_j + 2 \e L n \le 3 \e L_j n_j, \]
where the inequality holds as $L_i n_i \le L_j n_j$, and negative
terms on the left hand side are discarded.

Therefore, the probability that a translate of $\gamma_j^-$ is
contained in a $2 \delta$-neighbourhood of $\gamma_i$ is at most
\[ 3 \e L_j n_j K c^{(1 - 3 \e) L_j n_j} \le K'' c''^{n}, \]
for some constants $K''$ and $c''$, where the inequality above holds
as the function $f(x) = xc^x$ is decreasing for all $x$ sufficient
large, and bounded above by a constant multiple of an exponential
function.  As there are at most $2k$ choices of indices for each of
$i$ and $j$, the result follows.
\end{proof}

Finally, we give an estimate for the probability that a geodesic
$\gamma_j$ has an initial segment which matches a terminal segment of
$\gamma_i$, concatenated with an initial segment of $\gamma_{i'}$, for
some $i \le j$ and $i' \le j$.

Given a collection of geodesics $\{ \gamma_i \}_{i \in \pm I}$, and a
number $K$, define a collection of geodesic segments $\{ \eta(i, i',
K, \ell) \mid i, i' \in \pm I, i \not = - i', \ell \in \N , 0 \le \ell
\le \norm{\gamma_i} \}$ as follows.  Let $i$ and $i'$ be indices in
$\pm I$ with the property that $i \not = -i'$, and let $0 \le \ell \le
\norm{\gamma_i}$ be an integer.
Let $p$ be a point on
$\gamma_i$ distance $\ell$ from its endpoint, and let $q$ be a point
on $w_i \gamma_{i'}$ distance $K$ from the initial point of $w_i
\gamma_{i'}$. Define $\eta(i, i', K, \ell)$ to be a geodesic from $p$
to $q$.

\begin{figure}[H]
\begin{center}
\begin{tikzpicture}

\tikzstyle{point}=[circle, draw, fill=black, inner sep=1pt]

\draw (1, 0.5) node [above] {$\gamma_{i}$} --      
      (4, 1) node [point, label=above:$w_i x_0$] {} -- 
      (7, 0.5) node [above] {$w_i \gamma_{i'}$};

\draw [arrows=triangle 45-triangle 45]
      (2, 0.6667) node [point, label=below:$p$] {} -- 
               node [midway, above] {$\ell$}
      (4, 1);

\draw [arrows=triangle 45-triangle 45] (4, 1) --
              node [midway, above] {$K$}
      (6, 0.6667) node [point, label=below:$q$] {};

\draw (2, 0.6667) --
                  node [midway, below] {$\eta(i, i', K, \ell)$} 
      (6, 0.6667);

\end{tikzpicture}
\end{center} 
\label{pic:eta}
\caption{A geodesic $\eta(i, i', K, \ell)$.}
\end{figure}
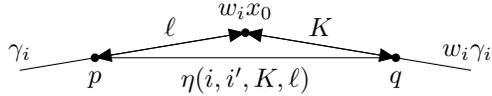

\begin{definition} \label{def:unmatch}
We say that a random subgroup $H$ is \emph{$K$-unmatched} if for all
$i \le j$, $i' \le j$ and for $0 \le t \le K$, no geodesic $\eta(i,
i', K, \ell)$, is contained in a $2 \delta$-neighbourhood of a
subgeodesic of $\gamma_j$ starting at $\gamma_j(t)$.
\end{definition}

\begin{prop} \label{prop:unmatch}
Let $H$ be a random subgroup, and let $0 < \e < \tfrac{1}{6}$. Then
there are constants $K$ and $c$, depending on $\e$ and the $\mu_i$,
such that the probability that $H$ is $(3 \e L n)$-unmatched is at least
$1 - K c^n$.
\end{prop}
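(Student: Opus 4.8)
The plan is to bound the probability that $H$ fails to be $(3\e L n)$-unmatched by a sum, over the finitely many indices $i\le j$, $i'\le j$ and over the $O(n)$ possible starting positions $t$ with $0\le t\le 3\e L n$, of the probability that a single geodesic $\eta(i,i',3\e L n,\ell)$ is contained in a $2\delta$-neighbourhood of a subgeodesic of $\gamma_j$ starting at $\gamma_j(t)$. The strategy closely parallels the proof of Proposition \ref{prop:large match}: first reduce to a discrete set of starting points at an additive cost in the fellow-travelling constant, then apply the single-geodesic matching estimate of Proposition \myref{match finite} to each event, and finally multiply the resulting exponential bound by the polynomially-many (in fact linearly-many in $n$) choices of indices and offsets.

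First I would condition on $H$ satisfying $\e$-length bounds, which by Proposition \ref{prop:length bounds} holds with probability at least $1-K'c'^n$; this lets me assert that each $\gamma_i$ has length comparable to $L_in_i$, with $Ln\le L_in_i$ for the relevant indices. The key geometric observation is that because $i\le j$ and $i'\le j$, we have $L_in_i\le L_jn_j$ and $L_{i'}n_{i'}\le L_jn_j$, so the segment $\eta(i,i',3\e L n,\ell)$ that is being matched into $\gamma_j$ has length bounded below by something like $(1-\e)L_jn_j$ minus the truncations of size $O(\e L n)$ at each end (the offset $\ell$ and the fixed gap $K=3\e L n$). Here the crucial point is that $\eta$ essentially traverses a terminal chunk of $\gamma_i$ and an initial chunk of $w_i\gamma_{i'}$, and by the Gromov-product and length bounds (Proposition \ref{prop:schottky} / Proposition \ref{prop:concat}) this concatenation is itself a $(1,O(\delta))$-quasigeodesic whose persistent length is at least roughly $(1-C\e)L_jn_j$.

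Next I would pass to a discrete set of offsets: if $\eta(i,i',K,\ell)$ lies in a $2\delta$-neighbourhood of a subgeodesic of $\gamma_j$ starting at $\gamma_j(t)$ for some real $t$, then it lies in a $(2\delta+1)$-neighbourhood of the subgeodesic starting at $\gamma_j(\lfloor t\rfloor)$, at the cost of enlarging the constant by $1$. Then Proposition \myref{match finite} gives, for each fixed choice of $i,i',\ell$ and integer starting point $t$, that the probability a translate of $\eta(i,i',3\e L n,\ell)$ is contained in the $(2\delta+1)$-neighbourhood of $\gamma_j$ starting at $\gamma_j(t)$ is at most $Bc^{\norm{\eta}}\le Bc^{(1-C\e)L_jn_j}\le Bc^{(1-C\e)Ln}$. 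Summing over the at most $(2k)^3$ index choices, the $O(n)$ values of $\ell$, and the $O(\e L n)$ integer starting positions $t$ with $0\le t\le 3\e L n$, the total is at most a polynomial in $n$ times $c^{(1-C\e)Ln}$, which is bounded by $K''c''^n$ for suitable constants since $\e<\tfrac16$ keeps $1-C\e$ bounded away from zero; combining with the $\e$-length-bounds failure probability gives the claimed $1-Kc^n$.

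The main obstacle I anticipate is verifying that $\norm{\eta(i,i',3\e L n,\ell)}$ is genuinely linear in $n$ with a coefficient bounded below uniformly — i.e., that the truncation by $\ell$ (which ranges up to $\norm{\gamma_i}$) does not shrink the matched segment too much in the worst case. The definition only restricts $t$ to $0\le t\le 3\e L n$, but places no small bound on $\ell$, so for $\ell$ close to $\norm{\gamma_i}$ the segment $\eta$ could consist almost entirely of an initial piece of $w_i\gamma_{i'}$ of length nearly $L_{i'}n_{i'}$; I must check that even in this regime $\norm{\eta}\ge c'''Ln$ so that the exponential decay beats the linear number of offsets. This amounts to a careful case analysis using the concatenation estimates of Proposition \ref{prop:concat} to control the persistent length of the two-segment path forming $\eta$, together with the constraint $i'\le j$ ensuring $L_{i'}n_{i'}\le L_jn_j$; once that lower bound on $\norm{\eta}$ is secured, the summation is routine.
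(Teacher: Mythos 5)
Your overall framework (condition on the length and Gromov product bounds, discretize the starting positions, apply Proposition \myref{match finite} to each configuration, and take a union bound) is the same as the paper's, but your key quantitative claim about $\norm{\eta(i,i',3\e L n,\ell)}$ is false, and this creates a genuine gap. The geodesic $\eta(i,i',3\e L n,\ell)$ runs from the point $p$ of $\gamma_i$ at distance $\ell$ \emph{back} from $w_{i,n_i}x_0$ to the point $q$ of $w_{i,n_i}\gamma_{i'}$ at distance $3\e L n$ \emph{forward} from $w_{i,n_i}x_0$; the Gromov product bound gives $\norm{\eta}\ge \ell+3\e L n-2\e L n-O(\delta)=\ell+\e L n-O(\delta)$, and this is essentially sharp. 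In particular, for $\ell=0$ the length is only about $3\e L n$, nowhere near your claimed $(1-C\e)L_jn_j$ or $(1-C\e)Ln$ (note also that $\norm{\eta}$ does not depend on $j$ at all). You have also misidentified the dangerous regime: for $\ell$ close to $\norm{\gamma_i}$ the segment $\eta$ is at its \emph{longest}, covering essentially all of $\gamma_i$ plus an initial piece of $w_{i,n_i}\gamma_{i'}$; the problem is at small $\ell$.

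This matters because your plan is to use a lower bound on $\norm{\eta}$ that is uniform in $\ell$ and then multiply by the number of choices of $\ell$. With the correct uniform bound $\norm{\eta}\ge\e L n-O(\delta)$, each term is at most $Bc^{\e L n}$, but the number of values of $\ell$ is up to $\norm{\gamma_i}\approx L_i n_i$, which is \emph{not} $O(n)$: here $n=\min n_i$, and the other $n_i$ may be arbitrarily larger (even super-exponential in $n$), so a bound of the form $L_i n_i\cdot c^{\e L n}$ is not of the form $Kc^n$. The paper's proof avoids this by keeping the $\ell$-dependence: the per-term bound is $Kc^{\ell+\e L n}$, so the sum over $\ell$ is a geometric series bounded by $c^{\e L n}/(1-c)$ no matter how large the range of $\ell$ is, and only the at most $3\e L n$ integer starting points contribute a polynomial factor. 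Your argument becomes correct once you replace the uniform bound by $\norm{\eta}\ge\ell+\e L n-O(\delta)$ and sum the resulting geometric series in $\ell$.
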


\begin{proof}
We shall assume that the random subgroup $H$ satisfies the $\e$-length
bounds and $(\e L n)$-Gromov product bounds, which happens with
probability at least $1 - K' {c'}^{n}$, for some constants $K'$ and
$c'$, depending on $\e$ and the $\mu_i$.

First consider a fixed collection of indices $i, i', j$ in $\pm I$,
with $i \le j$, $i' \le j$ and $i \not = - i'$.  The Gromov product
bound for $i$ and $i'$ implies that the length of $\eta = \eta(i, i',
3 \e L n, \ell) $ is at least $\ell + 3 \e L n - 2 \e L n = \ell + \e
L n$.  If a translate of $\eta(i, i', 3 \e L n, \ell)$ is contained in
a $2 \delta$ neighbourhood of $[ \gamma_j(t), \gamma_j(t +
\norm{\eta}) ]$, then it is contained in a $(2 \delta +
1)$-neighbourhood of $[ \gamma_j(m), \gamma_j(m + \norm{\eta}) ]$, for
some $m \in \N$.

By Proposition \myref{match finite}, the probability that a translate
of $\eta(i, i', 3 \e L n, \ell)$ is contained in a $(2 \delta +
1)$-neighbourhood of $[ \gamma_j(m), \gamma_j(m + \norm{\eta}) ]$ is
at most $K c^{\ell + \e L n}$. As there are at most $3 \e L n$ choices
for $m$, the probability that this occurs for some $0 \le m \le 3 \e L
n$ is at most $3 \e L n K c^{\ell + \e L n}$. The sum of these
probabilities over all values of $\ell$ is at most $ ( 3 \e L n K
c^{\e L n} ) / (1 - c) \le K' c'^n$ for different constants $K'$ and
$c'$.

There are at most $2k$ possible admissible choices for each of the
indices $i, i'$ and $j$, and so assuming $\e$-length bounds and Gromov
product bounds, the probability that the geodesics are not $(3 \e L
n)$-unmatched is at most $(2k)^3 K' c'^n$.  Therefore, the probability
that $\e$-length bounds, Gromov bounds and $(3 \e L n)$-unmatching all
hold simultaneously is at least $1 - K {c'}^n$, for $K = (2k)^3 K'$.
\end{proof}

In order to show Theorem \ref{theorem:main} it therefore suffices to
show:

\begin{prop} \label{prop:separated} %
Let $H$ be a random subgroup of $G$ , and let $0 < \e < \tfrac{1}{6}$.
If $H$ satisfies $\e$-length bounds, $(\e L n)$-Gromov product bounds,
has no $\e$-large match and is $(3 \e L n)$-unmatched, then $\Gamma_H$
is $(6, O(\delta, \e L n))$-quasi-isometrically embedded in $X$ and $H
\ltimes E(G)$ is geometrically separated in $X$.
\end{prop}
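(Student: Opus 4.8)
We need to show two things under the four combinatorial hypotheses: first, the quasi-isometric embedding of $\Gamma_H$, and second, geometric separation of $H \ltimes E(G)$. The first is immediate: $\e$-length bounds and $(\e L n)$-Gromov product bounds give exactly the inequalities \eqref{eq:gp conditions} needed in Proposition \ref{prop:schottky} (for $n$ large enough that $(1-\e)Ln \ge 6K$), so that proposition directly yields the $(6, O(\delta, \e L n))$-quasi-isometric embedding of $\Gamma_H$, and in particular that $H$ is free on the $w_{i,n_i}$, whence $H \cap E(G) = \{1\}$ and $HE(G) = H \ltimes E(G)$ by Proposition \ref{prop:semidirect}. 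The substance of the proof is geometric separation.

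**Strategy for geometric separation.**
The plan is to argue by contradiction following the outline in the Introduction. Suppose $H \ltimes E(G)$ is \emph{not} geometrically separated. Then for some $R$ there are arbitrarily large intersections $N_R(gHx_0) \cap N_R(Hx_0)$ with $g \in G \setminus (H \ltimes E(G))$. Because $\Gamma_H$ is quasi-isometrically embedded, a large such intersection forces a long geodesic $\gamma$ with endpoints in $Hx_0$ and a long geodesic $g\gamma'$ (with $\gamma'$ having endpoints in $Hx_0$) that fellow-travel over a distance far exceeding all the lengths $L_i n_i$ of the generators. The first key step is to pass from fellow-traveling of these $H$-paths to a single long match involving one generator: since $\gamma$ and $\gamma'$ are, up to the quasi-isometry, concatenations of translates of the generator geodesics $\gamma_i$, a match of length much larger than $\max_i L_i n_i$ must, by the no-$\e$-large-match and $(3\e L n)$-unmatched hypotheses, align a translate of some generator geodesic $\gamma_{i,n_i}$ with another translate of the \emph{same} generator geodesic. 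Concretely, $g$ carries a translate $h_1 \gamma_{i,n_i}$ to (a neighborhood of) a translate $h_2 \gamma_{i,n_i}$ with $h_1, h_2 \in H$.

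**The decisive step.**
This alignment means $h_2^{-1} g h_1$ coarsely stabilizes $\gamma_{i,n_i}$, i.e. it lies in the $K$-stabilizer of a long segment of an axis for $w_{i,n_i}$. By Proposition \ref{prop:n-asymmetric}, $w_{i,n_i}$ is $(\e L n)$-asymmetric with asymptotic probability one, and it is weakly asymmetric by Proposition \ref{prop:k=1}; applying the $K$-asymmetry together with $K$-primitivity (after correcting by a power of $w_{i,n_i}$ exactly as in the proof of Proposition \ref{prop:weakly asymmetric}) forces $h_2^{-1} g h_1 \in \langle w_{i,n_i} \rangle \ltimes E(G) \subset H \ltimes E(G)$. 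Hence $g \in h_2 (\langle w_{i,n_i}\rangle \ltimes E(G)) h_1^{-1} \subset H \ltimes E(G)$, contradicting $g \notin H \ltimes E(G)$.

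**The main obstacle.**
The hard part is the combinatorial/geometric reduction from ``two long $H$-paths fellow travel'' to ``$g$ sends one generator translate near another single generator translate,'' i.e. showing that a match whose length exceeds all generator lengths cannot be realized by gluing together pieces of \emph{different} or \emph{shorter} generators. This is precisely what the $\e$-large-match and $(3\e L n)$-unmatched conditions are designed to preclude: the $\e$-large-match condition rules out a shorter generator being absorbed into a longer one, and the $(3\e L n)$-unmatched condition rules out a match that straddles a branch point of $\Gamma_H$ (a terminal segment of one generator concatenated with an initial segment of another). The delicate point will be bookkeeping the quasi-geodesic error terms so that a match of length $\gg L_k n_k$ genuinely localizes inside the interior of a single generator segment, allowing the $K$-asymmetry argument to apply; I expect this alignment-of-indices argument, using that any geodesic in $\Gamma_H$ passes within $C$ of each persistent subsegment (Proposition \ref{prop:concat}), to be the technical crux.
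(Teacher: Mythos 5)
Your proposal follows essentially the same route as the paper's proof: Proposition \ref{prop:schottky} gives the quasi-isometric embedding of $\Gamma_H$ directly from the length and Gromov product bounds, and geometric separation is obtained by pulling a long fellow-travelling between $g\gamma$ and $\gamma'$ back to the quasigeodesic paths in $\Gamma_H$, using the no-$\e$-large-match and $(3\e L n)$-unmatched hypotheses to force $g$ to carry a translate $h\gamma_{j}$ onto another translate $h'\gamma_{j}$ of the \emph{same} generator geodesic, whence $h'^{-1}gh$ coarsely stabilizes $[p, w_{j,n_j}p]$ and irreversibility plus $(\e L n + O(\delta))$-asymmetry (Propositions \ref{prop:reversible} and \ref{prop:n-asymmetric}) place it in $\langle w_{j,n_j}\rangle \ltimes E(G)$. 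The ``alignment-of-indices'' crux you flag is resolved in the paper exactly along the lines you predict: after trimming $2R+O(\delta,\e L n)$-neighbourhoods of the endpoints, one takes the \emph{largest} index $j$ of any generator segment appearing in either trimmed path (recall the generators are ordered so $L_1 n_1 \le \cdots \le L_k n_k$), and then the nearest-point projection of $h\gamma_j$ either lies inside a single translate of some $\gamma_i$ with $i\le j$ (where $i<j$ is an $\e$-large match and $i=j$ is the desired self-match) or passes within $\e L n$ of a vertex of $\Gamma_H$, producing a forbidden $\eta(i,i',3\e L n,\ell)$.
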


We now prove Theorem \ref{theorem:main}, assuming Proposition
\ref{prop:separated}.

\begin{proof}[Proof (of Theorem \ref{theorem:main})]
The first property in Theorem \ref{theorem:main}, the fact that each
generator $w_{i, n_i}$ is hyperbolic and asymmetric, follows from
Proposition \ref{prop:k=1} applied to each of the random walks $w_{i,
  n_i}$.

The second property, that $\Gamma_H$ is a quasi-isometrically embedded
follows (as we have already observed) if $H$ satisfies $\e$-length
bounds and $(\e L n)$-Gromov product bounds, which hold with
probabilities at least $1 - K c^n$, by Propositions \ref{prop:length
  bounds} and \ref{prop:gromov bounds}, for constants $K$ and $c < 1$
depending only on $\e$ and the $\mu_i$.  This then implies that $H$ is
freely generated by its generators $w_{i, n_i}$ and $H E(G) = H
\ltimes E(G)$.

The final property, that $H \ltimes E(G)$ is geometrically separated,
holds if $H$ satisfies the four conditions, $\e$-length bounds, $(\e L
n)$-Gromov product bounds, no $\e$-large match and being $(3 \e L
n)$-unmatched, and these hold with probability at least $1 - K'
{c'}^n$, by Propositions \ref{prop:length bounds}, \ref{prop:gromov
  bounds}, \ref{prop:large match} and \ref{prop:unmatch}, for some
constants $K'$ and $c' < 1$, depending only on $\e$ and the $\mu_i$,
as required.
\end{proof}

The final step is to prove Proposition \ref{prop:separated}.  We shall
use the following properties of geodesics and quasigeodesics in a
hyperbolic space $X$, see for example Bridson and Haefliger
\cite{bh}*{III.H.1}.  If two geodesics in $X$ are $A$-fellow
travellers, then they are in fact $O(\delta)$-fellow travellers,
outside balls of radius $A$ about their endpoints. Similarly, if two
$(A, B)$-quasigeodesics are $C$-fellow travellers, then they are
$O(\delta, A, B)$-fellow travellers outside $C$-neighbourhoods of their
endpoints.

\begin{proof}[Proof (of Proposition \ref{prop:separated})]
Recall that the (image in $X$ of the) rescaled Cayley graph $\Gamma_H$ is the union of
translates of geodesic segments $\gamma_{i}$ from $x_0$ to $w_{i, n_i}
x_0$ by elements of $H$.  Let $\gamma$ be a geodesic in $X$ connecting
two points $h_1 x_0$ and $h_2 x_0$ of $H x_0$. These two points are
also connected by a path $\widehat \gamma$ in $\Gamma_H$, which is a
concatenation of geodesic segments $\gamma_{i}$, corresponding to the
reduced word determined by $h_1^{-1}h_2$ in $H$. The path $\widehat
\gamma$ is an $( 6 , O(\delta, \e L n) )$-quasigeodesic in $X$, which
by the Morse property is contained in an $O(\delta, \e L
n)$-neighbourhood of $\gamma$.

We will show that geometric separation holds for a constant $B(R) = 4R
+ O(\delta, \e L n)$.  Let $\gamma$ and $\gamma'$ be geodesics in $X$
of length at least $B$, with endpoints in $H$, and an element $g \in
G$, such that $g \gamma$ is an $( 2 R + O(\delta) )$-fellow traveller
with $\gamma'$.  In order to show geometric separation, it suffices to
show that $g$ in fact lies in $H \ltimes E(G)$.

Let $\widehat \gamma$ and $\widehat \gamma'$ be the corresponding
paths in $\Gamma_H$ connecting the endpoints of $\gamma$ and
$\gamma'$.  The quasigeodesics $\widehat \gamma$ and $\widehat
\gamma'$ are $(2 R + O(\delta, \e L n))$-fellow travellers in $X$, and
we shall denote their endpoints by $\widehat \gamma(0)$ and $\widehat
\gamma( T )$ for $\widehat \gamma$, and $\widehat \gamma'(0)$ and
$\widehat \gamma'(T')$ for $\widehat \gamma'$. Therefore, if we set
$\widehat \gamma_-$ and $\widehat \gamma'_-$ to be the largest union
of segments which are translates of the $\gamma_i$ contained in
$\widehat \gamma \setminus ( B_X(\widehat \gamma(0) \cup \widehat
\gamma(T), 2 R + O(\delta, \e L n) )$ and $\widehat \gamma' \setminus
( B_X(\widehat \gamma'(0) \cup \widehat \gamma'(T'), 2 R + O(\delta,
\e L n) )$, then $\widehat \gamma_-$ and $\widehat \gamma'_-$ are
$O(\delta, \e L n)$-fellow travellers.  By a sufficiently large choice
of $B$ we may assume that the lengths of $\widehat \gamma$ and
$\widehat \gamma'$ are at least $4R + (1 + \e) L n + O(\delta)$, and
so both $\widehat \gamma_-$ and $\widehat \gamma'_-$ are non-empty, as
we have assumed that the $\gamma_i$ satisfy $\e$-length bounds and
Gromov product bounds.

Each path $\widehat \gamma_-$ or $\widehat \gamma'_-$ is a
concatenation of geodesic segments which are translates of the
$\gamma_i$. Let $j$ be the largest index of any path segment whose
translate appears in either of $\widehat \gamma_-$ or $\widehat
\gamma'_-$.  If the largest index $j$ does not appear in both paths,
then up to relabelling, we may assume that $j$ occurs in $\widehat
\gamma_-$, and let $h \gamma_j$ be a corresponding geodesic segment in
the path $\widehat \gamma_-$, for some $h \in H$.

We now consider two cases. Either the nearest point projection of $h
\gamma_j$ to $\widehat \gamma_-$ is contained in the translate of a
single $\gamma_i$ for $i \le j$, or $h \gamma_{j} \subset \widehat
\gamma_-$ contains a point within distance $\e L n$ of some point of
the orbit $H x_0$. If the first case occurs with $i < j$, then $H$ has
an $\e$-large match, which we have assumed does not happen, so $g h
\gamma_j$ in fact $(\e L n)$-fellow travels a translate of itself in
$\widehat \gamma_-$. This means that the translate $g h \gamma_j$ $(
\e L n)$-fellow travels $h' \gamma_j$ for some $h' \in H$, and so
$h'^{-1} g h$ $(\e L n + O(\delta))$-stabilizes $[p, w_{j, n_j} p]$,
where $p$ is a nearest point projection of the basepoint $x_0$ to the
axis $\alpha_j$ for $w_{j, n_j}$.  By Proposition
\ref{prop:reversible}, $w_{j, n_j}$ is irreversible with asymptotic
probability one, so $h'^{-1} g h$ does swap the endpoints of the
geodesic $[p, w_{j, n_j} p]$, and by Proposition
\ref{prop:n-asymmetric}, we may assume that $w_{j, n_j}$ is $(\e L n +
O(\delta))$-asymmetric, and so this implies that $h'^{-1} g h \in \<
w_{j, n_j} \> \ltimes E(G) \subset H \ltimes E(G)$.  As both $h$ and
$h'$ lie in $H$, this implies that $g$ lies in $H \ltimes E(G)$, with
asymptotic probability one, as required.

It remains to show that if the second case occurs then $H$ is $(3 \e L
n)$-unmatched, as we now explain.  Let $p$ be a point in $\Gamma_H$
closest to the initial point of $g' \gamma_{j}$, and let $q$ be the
point in $\Gamma_H$ closest to the terminal point of $g' \gamma_{j}$. Let $h
x_0$ be the first point of $H x_0$ occurring between $p$ and $q$. Let
$h w_{i}^{-1} \gamma_{i}$ be the geodesic segment of $\Gamma_H$
containing $p$, and let $h \gamma_{i'}$ be the next geodesic segment
of $\Gamma_H$ along the geodesic in $\Gamma_H$ from $p$ to
$q$. Finally, let $q'$ be a point on $h \gamma_{i'}$ distance $\e L n$
from $h x_0$. This is illustrated below in Figure \ref{pic:Gamma_H}.

\begin{figure}[H]
\begin{center}
\begin{tikzpicture}

\tikzstyle{point}=[circle, draw, fill=black, inner sep=1pt]

\begin{scope}[xshift=2cm, yshift=0.75cm]
\draw (0, -1) .. controls (0.25, -0.5) and (0.5, -0.5) .. 
      (1, -0.5) --  
      (3, -0.5) --
      (6, -0.5) node [pos=0.8, below] {$g' \gamma_{j}$} 
                .. controls (6.5, -0.5) and (6.75, -0.5) .. 
      (7, -1);
\end{scope}

\draw (1, 0.5) node [above] {$h w_{i, n_i}^{-1} \gamma_{i}$} --
      (3, 0.5) .. controls (3.5, 0.5) and (3.5, 0.5) ..
      (4, 1) node [point, label=above:$h x_0$] {} circle (1cm) 
             .. controls (4.5, 0.5) and (4.5, 0.5) ..
      (5, 0.5) --
      (7, 0.5) node [above] {$h \gamma_{i'}$};

\draw (6, 2) node {$B_X( h x_0, 3 \e L n)$};

\draw (2, 0.5) node [point, label=below:$p$] {};
\draw (4.87, 0.5) node [point, label=above right:$q'$] {};

\end{tikzpicture}
\end{center} 
\label{pic:Gamma_H}
\caption{A subsegment of the geodesic $g \widehat \gamma_-$ fellow
  travels $\Gamma_H$.}
\end{figure}
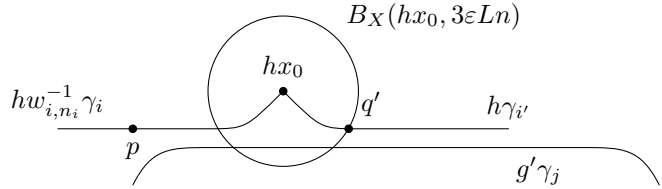

We now observe that the geodesic in $X$ from $p$ to $q'$ is the
geodesic $\eta(i, i', 3 \e L n \ell)$ used in Definition
\ref{def:unmatch}, and so if the second case occurs, then $H$ is not
$(\e L n)$-unmatched, contradicting our initial assumptions on $H$.
\end{proof}


\begin{bibdiv}
\begin{biblist}

\bib{AMS}{article}{
   author={Antolin, Yago},
   author={Minasyan, Ashot},
   author={Sisto, Alessandro},
   title={Commensurating endomorphisms of acylindrically hyperbolic groups and applications},
   journal={Groups, Geometry, and Dynamics, to appear},
   date={2016},
}

\bib{aoun}{article}{
   author={Aoun, Richard},
   title={Random subgroups of linear groups are free},
   journal={Duke Math. J.},
   volume={160},
   date={2011},
   number={1},
   pages={117--173},
   issn={0012-7094},
}

\bib{bgh}{article}{
   author={Baik, Hyungryul},
   author={Gekhtman, Ilya},
   author={Hamensd\"adt, Ursula},
   title={The smallest positive eigenvalue of fibered hyperbolic $3$-manifolds},
   eprint={arxiv:1608.07609},
   date={2016},
}

\bib{bhs}{article}{
    author={Behrstock, Jason},
    author={Hagen, Mark},
    author={Sisto, Alessandro},
    title={Hierarchically hyperbolic spaces I: curve complexes for cubical groups},
    eprint={arXiv:1412.2171},
    date={2014},
}

\bib{bestvina-feighn}{article}{
   author={Bestvina, Mladen},
   author={Feighn, Mark},
   title={Hyperbolicity of the complex of free factors},
   journal={Adv. Math.},
   volume={256},
   date={2014},
   pages={104--155},
   issn={0001-8708},
}

\bib{bestvina-fujiwara}{article}{
   author={Bestvina, Mladen},
   author={Fujiwara, Koji},
   title={Bounded cohomology of subgroups of mapping class groups},
   journal={Geom. Topol.},
   volume={6},
   date={2002},
   pages={69--89 (electronic)},
   issn={1465-3060},
}

\bib{bowditch}{article}{
   author={Bowditch, Brian H.},
   title={Intersection numbers and the hyperbolicity of the curve complex},
   journal={J. Reine Angew. Math.},
   volume={598},
   date={2006},
   pages={105--129},
   issn={0075-4102},
}

\bib{bh}{book}{
   author={Bridson, Martin R.},
   author={Haefliger, Andr{\'e}},
   title={Metric spaces of non-positive curvature}, series={Grundlehren der Mathematischen Wissenschaften [Fundamental Principles of Mathematical Sciences]},
   volume={319},
   publisher={Springer-Verlag},
   place={Berlin},
   date={1999},
   pages={xxii+643},
   isbn={3-540-64324-9},
}

\bib{calegari-maher}{article}{
   author={Calegari, Danny},
   author={Maher, Joseph},
   title={Statistics and compression of scl},
   journal={Ergodic Theory Dynam. Systems},
   volume={35},
   date={2015},
   number={1},
   pages={64--110},
   issn={0143-3857},
}

\bib{chatterji-martin}{article}{
    author={Chatterji, Indira},
    author={Martin, Alexandre},
    title={A note on the acylindrical hyperbolicity of groups acting on CAT(0) cube complexes},
    eprint={arXiv:1610.06864},
    date={2016},
}

\bib{dahmani-horbez}{article}{
	author={Dahmani, F.}, 
	author={Horbez, C.},
	title={Spectral theorems for random walks on mapping class groups and Out($F_N$)},
	eprint={arXiv:1506.06790},
	date={2015},
}

\bib{dgo}{article}{
	author={Dahmani, F.}, 
	author={Guirardel, V.},
	author={Osin, D.},
	title={Hyperbolically embedded subgroups and rotating families in groups acting on hyperbolic spaces},
	eprint={arXiv:1111.7048},
	date={2011},
}

\bib{fm}{book}{
   author={Farb, Benson},
   author={Margalit, Dan},
   title={A primer on mapping class groups},
   series={Princeton Mathematical Series},
   volume={49},
   publisher={Princeton University Press},
   place={Princeton, NJ},
   date={2012},
   pages={xiv+472},
   isbn={978-0-691-14794-9},
}

\bib{fps}{article}{
   author={Frigerio, R.},
   author={Pozzetti, M. B.},
   author={Sisto, A.},
   title={Extending higher-dimensional quasi-cocycles},
   journal={J. Topol.},
   volume={8},
   date={2015},
   number={4},
   pages={1123--1155},
   issn={1753-8416},
}

\bib{gadre-maher}{article}{
  author={Gadre, Vaibhav},
  author={Maher, Joseph},
  title={The stratum of random mapping classes},
  date={2016},
  eprint={arXiv:1607.01281},
}

\bib{genevois}{article}{
    author={Genevois, Anthony},
    title={Contracting isometries of CAT(0) cube complexes and acylindrical hyperbolicity of diagram groups},
    eprint={arXiv:1610.07791},
    date={2016},
}

\bib{gmo}{article}{
   author={Gilman, Robert},
   author={Miasnikov, Alexei},
   author={Osin, Denis},
   title={Exponentially generic subsets of groups},
   journal={Illinois J. Math.},
   volume={54},
   date={2010},
   number={1},
   pages={371--388},
   issn={0019-2082},
}

\bib{gromov1}{article}{
   author={Gromov, M.},
   title={Hyperbolic groups},
   conference={
      title={Essays in group theory},
   },
   book={
      series={Math. Sci. Res. Inst. Publ.},
      volume={8},
      publisher={Springer, New York},
   },
   date={1987},
   pages={75--263},
}

\bib{gromov2}{article}{
   author={Gromov, M.},
   title={Random walk in random groups},
   journal={Geom. Funct. Anal.},
   volume={13},
   date={2003},
   number={1},
   pages={73--146},
   issn={1016-443X},
}

\bib{gruber-sisto}{article}{
   author={Gruber, Dominik},
   author={Sisto, Alessandro},
   title={Infinitely presented graphical small cancellation groups are acylindrically hyperbolic},
   eprint={arXiv:1408:.4488},
   date={2016},
}

\bib{guivarch}{article}{
   author={Guivarc'h, Yves},
   title={Produits de matrices al\'eatoires et applications aux
   propri\'et\'es g\'eom\'etriques des sous-groupes du groupe lin\'eaire},
   journal={Ergodic Theory Dynam. Systems},
   volume={10},
   date={1990},
   number={3},
   pages={483--512},
   issn={0143-3857},
}

\bib{ham}{article}{
   author={Hamenst{\"a}dt, Ursula},
   title={Bounded cohomology and isometry groups of hyperbolic spaces},
   journal={J. Eur. Math. Soc. (JEMS)},
   volume={10},
   date={2008},
   number={2},
   pages={315--349},
   issn={1435-9855},
}

\bib{Hartnick-Sisto}{article}{
   author={Hartnick, Tobias},
   author={Sisto, Alessandro},
   title={work in progress},
   date={2016},
}

\bib{healy}{article}{
    author={Healy, Burns},
    title={CAT(0) Groups and Acylindrical Hyperbolicity},
    eprint={arXiv:1610.08005},
    date={2016},
}

\bib{Hull}{article}{
   author={Hull, Michael},
   title={Small cancellation in acylindrically hyperbolic groups},
   eprint={arXiv:1308.4345},
   date={2013},
}

\bib{hull-osin}{article}{
   author={Hull, Michael},
   author={Osin, Denis},
   title={Induced quasicocycles on groups with hyperbolically embedded
   subgroups},
   journal={Algebr. Geom. Topol.},
   volume={13},
   date={2013},
   number={5},
   pages={2635--2665},
   issn={1472-2747},
}

\bib{jit}{article}{
   author={Jitsukawa, Toshiaki},
   title={Malnormal subgroups of free groups},
   conference={
      title={Computational and statistical group theory (Las Vegas,
      NV/Hoboken, NJ, 2001)},
   },
   book={
      series={Contemp. Math.},
      volume={298},
      publisher={Amer. Math. Soc., Providence, RI},
   },
   date={2002},
   pages={83--95},
}

\bib{mt}{article}{
   author={Maher, Joseph},
   author={Tiozzo, Giulio},
   title={Random walks on weakly hyperbolic groups},
   date={2014},
   eprint={arXiv:1410.4173},
}

\bib{mtv}{article}{
   author={Martino, Armando},
   author={Turner, Ted},
   author={Ventura, Enric},
   title={The density of injective endomorphisms of a free group},
}

\bib{masai}{article}{
   author={Masai, Hidetoshi},
   title={Fibered commensurability and arithmeticity of random mapping tori},
   eprint={arXiv:1408.0348},
   date={2014},
}

\bib{mm1}{article}{
   author={Masur, Howard A.},
   author={Minsky, Yair N.},
   title={Geometry of the complex of curves. I. Hyperbolicity},
   journal={Invent. Math.},
   volume={138},
   date={1999},
   number={1},
   pages={103--149},
   issn={0020-9910},
}

\bib{ms}{article}{
   author={Mathieu, Pierre},
   author={Sisto, Alessandro},
   title={Deviation inequalities for random walks},
   eprint={arXiv:1411.7865},
   date={2014},
}

\bib{minasyan-osin}{article}{
   author={Minasyan, Ashot},
   author={Osin, Denis},
   title={Acylindrical hyperbolicity of groups acting on trees},
   journal={Math. Ann.},
   volume={362},
   date={2015},
   number={3-4},
   pages={1055--1105},
   issn={0025-5831},
   review={\MR{3368093}},
   doi={10.1007/s00208-014-1138-z},
}

\bib{mu}{article}{
   author={Myasnikov, Alexei G.},
   author={Ushakov, Alexander},
   title={Random subgroups and analysis of the length-based and quotient
   attacks},
   journal={J. Math. Cryptol.},
   volume={2},
   date={2008},
   number={1},
   pages={29--61},
   issn={1862-2976},
}

\bib{osin2}{article}{
   author={Osin, D.},
   title={On acylindrical hyperbolicity of groups with positive first
   $\ell^2$-Betti number},
   journal={Bull. Lond. Math. Soc.},
   volume={47},
   date={2015},
   number={5},
   pages={725--730},
   issn={0024-6093},
}

\bib{osin}{article}{
   author={Osin, D.},
   title={Acylindrically hyperbolic groups},
   journal={Trans. Amer. Math. Soc.},
   volume={368},
   date={2016},
   number={2},
   pages={851--888},
   issn={0002-9947},
}

\bib{ps}{article}{
    author={Przytycki, Piotr},
    author={Sisto, Alessandro},
    title={A note on acylindrical hyperbolicity of Mapping Class Groups},
    journal={Proceedings of the 7th MSJ-SI, Hyperbolic Geometry and Geometric Group Theory, to appear},
    date={2016},
}

\bib{rivin}{article}{
   author={Rivin, Igor},
   title={Zariski density and genericity},
   journal={Int. Math. Res. Not. IMRN},
   date={2010},
   number={19},
   pages={3649--3657},
   issn={1073-7928},
}

\bib{sela}{article}{
   author={Sela, Z.},
   title={Acylindrical accessibility for groups},
   journal={Invent. Math.},
   volume={129},
   date={1997},
   number={3},
   pages={527--565},
   issn={0020-9910},
}

\bib{sisto2}{article}{
   author={Sisto, Alessandro},
   title={Quasi-convexity of hyperbolically embedded subgroups},
   journal={Math. Z.},
   volume={283},
   date={2016},
   number={3-4},
   pages={649--658},
   issn={0025-5874},
}

\bib{sisto}{article}{
   author={Sisto, Alessandro},
   title={Contracting elements and random walks},
   journal={Journal f\"ur die reine und angewandte Mathematik (Crelle's Journal), to appear},
   date={2016},
}

\bib{sisto-taylor}{article}{
   author={Sisto, Alessandro},
   author={Taylor, Sam},
   title={Largest projections for random walks},
   eprint={arXiv:1611.07545},
   date={2016},
}

\bib{taylor-tiozzo}{article}{
   author={Taylor, Samuel J.},
   author={Tiozzo, Giulio},
   title={Random extensions of free groups and surface groups are
   hyperbolic},
   journal={Int. Math. Res. Not. IMRN},
   date={2016},
   number={1},
   pages={294--310},
   issn={1073-7928},
}

\end{biblist}
\end{bibdiv}


\vskip 20pt

\noindent Joseph Maher \\
CUNY College of Staten Island and CUNY Graduate Center \\
\url{joseph.maher@csi.cuny.edu} \\

\noindent Alessandro Sisto \\
ETH Z\"urich\\
\url{sisto@math.ethz.ch}


\end{document}